\newtheorem{theo}{Theorem}[section]
\newtheorem{lm}{Lemma}[section]
\newtheorem{rmk}{Remark}[section]
\newtheorem{proposition}{Proposition}[section]
\numberwithin{equation}{section}
\def\eps{\varepsilon}
\def\R{{\mathbb R}}
\def\Z{{\mathbb Z}}
\def\N{{\mathbb N}}
\def\I{{\mathcal{I}}}
\def\Exp{\mathbb{E}}
\def\Pr{\mathbb{P}}
\def\1{{\mathbf 1}}
\def\eps{\varepsilon}
\def\re{{\mathrm e}}
\def\0{{\mathbf 0}}
\def\po{\preccurlyeq}
\newcommand{\ud}{{\mathrm d}}
\newcommand{\ord}{{\mathop{\mathrm{ord}}}}
\newcommand{\tod}{\stackrel{{d}}{\rightarrow}}
\title{Rank-driven Markov processes}
\author{Michael Grinfeld \qquad Philip A. Knight \qquad Andrew R. Wade \\
\normalsize{University of Strathclyde}}
\begin{document}

\maketitle

\begin{abstract}
We study a class of Markovian systems of $N$ elements taking values in $[0,1]$ that evolve
in discrete time $t$
via randomized replacement rules based on the ranks of the elements. These rank-driven processes
 are inspired
by variants of the Bak--Sneppen model of evolution, in which the system represents an evolutionary
`fitness landscape' and which is famous as a simple
model displaying self-organized criticality.
Our main results are concerned with long-time
large-$N$ asymptotics for
 the general model in which, at each time step,
$K$ randomly chosen elements are discarded and replaced by independent
$U[0,1]$ variables, where the ranks of the elements to be replaced are chosen, independently at each time step,
according to a  distribution $\kappa_N$ on $\{1,2,\ldots,N\}^K$. 
Our main results are that, under   appropriate conditions on $\kappa_N$, 
the system exhibits threshold behaviour at $s^* \in [0,1]$, where $s^*$ is a
function of $\kappa_N$, and
the marginal distribution of a randomly selected element converges to $U[s^*, 1]$ as
$t \to \infty$ and $N \to \infty$. 
Of this class of models, results in the literature have previously been given for 
special cases only, namely the `mean-field' or `random neighbour' Bak--Sneppen model. Our proofs 
avoid the heuristic arguments of some of the previous work and   use   Foster--Lyapunov ideas. Our results extend existing results
and establish their natural, more general context. We derive some more specialized results for the particular case where $K=2$.
One of our  technical tools is a result on convergence of stationary distributions
for families of uniformly ergodic Markov chains on increasing state-spaces, which may be of independent interest.
\end{abstract}

\smallskip
\noindent
{\em Keywords:} Bak--Sneppen evolution model; self-organized criticality; Markov process on order statistics;
phase transition; interacting particle system. \/

\noindent
{\em AMS 2010 Subject Classifications:} 60J05 (Primary) 60J10, 60K35, 82B26, 92D15  (Secondary)

\section{Introduction}

Bak and Sneppen \cite{bs} introduced a simple stochastic model of 
evolution which initiated a considerable 
body of research by
physicists and mathematicians. The   Bak--Sneppen model
has proved so influential
because it is simple to describe and not difficult to simulate, and,
while being  challenging to analyse
rigorously,
demonstrates highly non-trivial behaviour: it is said to exhibit
`self-organized criticality' (see e.g.\ \cite{jensen}).

The Bak--Sneppen model is as follows.
Consider the sites $1,2, \ldots, N$ arranged cyclically,
so that site $k$ has neighbours $k-1$ and $k+1$ (working modulo $N$).
Each site, corresponding to a species in the model,
is initially assigned an independent $U[0,1]$ random variable
representing a `fitness' value for the species; here and subsequently $U[a,b]$ stands for the uniform
distribution on the interval $[a,b]$.
The Bak--Sneppen model
is a discrete-time Markov process, where
 at each step the minimal
fitness value   and the values at the two neighbouring sites
are replaced
by three independent $U[0,1]$ random variables.
A variation on the model is the (maximal) {\em anisotropic} Bak--Sneppen model \cite{hd} in which, at each step,
only the {\em right} neighbour of the site with minimal fitness is updated along with the minimal value.

A large physics literature is devoted to these models. Simulations suggest that the
equilibrium distribution of the fitness at any particular site approaches  $U[s^*, 1]$
 in the $N \to \infty$ limit, for some threshold value $s^*$; simulations
give $s^* \approx 0.667$ for the original Bak--Sneppen model and $s^* \approx 0.724$
for the anisotropic model \cite{gd}.
There is a much smaller number of mathematical papers on the model and its variants: see e.g.\ \cite{gmv,gmn,mz2,mz3};
see also the thesis \cite{gillett}. It is a challenge
to obtain further rigorous results for such models.

A simpler model can be formulated by removing the underlying topology, and such `mean field'
or `random neighbour'
versions of the model have also received attention in the literature: see e.g.\ \cite{deboer,deboer2,fbs,jensen,lp,pismak}.
For example,  the mean-field version of the
anisotropic Bak--Sneppen model 
 again has $N$ sites each endowed
with a fitness in $[0,1]$. At each step, the minimal fitness is replaced, along with one other
fitness chosen {\em uniformly at random} from the remaining $N-1$ sites. Again the replacement fitnesses
are independent $U[0,1]$ variables.

Such mean-field models display some features qualitatively similar to the original Bak--Sneppen model,
but give little indication of how the distinctive asymptotics of the Bak--Sneppen model,
in which the topology plays a key role, might arise. In particular, changing the topology of the model
  changes the value of the threshold $s^*$ in a way that the mean-field models cannot account for.
In the present paper we study some generalizations of the mean-field model described informally above,
which we call {\em rank-driven processes}. These models  
 represent one possibility for showing how the influence of topology might be replicated by simpler features.

In these more general models, we again have $N$ sites, and at each time
step some fixed number of fitness values are selected for replacement, but for this selection process
we allow  general 
stochastic rules based on the {\em ranks} of the values. These rank-driven processes
are Markov processes on {\em ranked} sequences, or {\em order statistics} (see Sections \ref{warmup}
and \ref{model} for formal definitions).

To give a concrete example,  we could, at each step, replace the minimal fitness along with the $R$th ranked value,
where $R$ is chosen independently at each step from some distribution on $\{2,\ldots,N\}$, with $R=2$ corresponding
to the second smallest value, and so on. This model generalizes that of \cite{deboer}, which has $R$
{\em uniform} on $\{2,\ldots,N\}$, and exhibits much richer behaviour. Specifically, the threshold $s^*$ depends explicitly
on the distribution chosen for $R$: in this way, the distribution of $R$ is playing a role analogous to the
underlying topology in the Bak--Sneppen models.

Such rank-driven processes are of interest in their own right, but our motivation
for studying them also arises from an attempt to understand the original Bak--Sneppen model,
where the topology plays a key role. While the Bak--Sneppen model
can be viewed as a Markov process on the space $[0,1]^N$, it
gives rise to a decidedly non-Markovian process on   order statistics. At the same time,
as we explore in detail in \cite{gkw1}, there is an algorithmic way to associate to the Bak--Sneppen
model a rank-driven process (a process that {\em is} Markovian on order statistics) and 
which, according to numerical evidence, shares a number of important properties with the Bak--Sneppen
model. The aim of the present paper is to present a rigorous analysis of rank-driven processes.
 
The outline of the remainder of the paper is as follows. In Section \ref{warmup}
we discuss an introductory example in which a single value is updated at each step.
In Section \ref{model} we describe the general rank-driven processes
that we consider and state our main theorems on asymptotic behaviour.
In Section \ref{main} we focus on a specific class of examples, generalizing the
mean-field anisotropic Bak--Sneppen model \cite{deboer,fbs}, and give some more detailed results.
We emphasize the difference in nature of the results in Sections
\ref{model} and \ref{main}: in the former,
the results cover a very general class of processes
and the proofs are   robust, using Foster--Lyapunov arguments
and general theory of Markov processes, while in the latter,
we specialize to a narrower class of processes and
exploit their special structure. It is likely that analogues of our
results from Section \ref{main} could be obtained for other
processes, but the details would depend on the particular processes studied.
In Section \ref{open} we make some further remarks and state some open
problems. Section \ref{sec:genproofs} is devoted to the proofs of the main
results in Section \ref{model}, while Section \ref{sec:proofs2} is devoted
to the proofs of the results in Section \ref{main}. The Appendix, Section \ref{sec:limits},
gives one of our key technical tools  on the asymptotics of
families of Markov chains that are uniformly ergodic in a precise sense.

  \section{Warm-up example: Replace the $k$th-ranked value}
  \label{warmup}

We start by describing a particularly simple model, which can be solved completely,
to demonstrate some ideas that will be useful in greater generality later on.
It will be convenient to view all of our models as Markov chains on {\em ranked} sequences, or {\em order statistics}.
Fix $N \in \N := \{1,2,\ldots\}$.
Given a vector $(x^1, x^2, \ldots, x^N)$ we write the corresponding increasing   order statistics  as
\[ (x^{(1)}, x^{(2)}, \ldots, x^{(N)} ) = \ord ( x^1, \ldots, x^N ), \]
where $x^{(1)} \leq x^{(2)} \leq \cdots \leq x^{(N)}$.
Let $\Delta_N$ denote the `simplex'
\[ \Delta_N := \{ (x^1,\ldots, x^N) \in [0,1]^N : x^1 \leq x^2 \leq \cdots \leq x^N \}.\]
We study 
 stochastic processes on $\Delta_N$ indexed by discrete time $\Z^+ := \{0,1,2,\ldots\}$.
 
Let $U_1, U_2, \ldots$ denote a sequence of independent $U[0,1]$ random variables.
Define a Markov process $X_t$ on $\Delta_N$
with transition rule such that, given $X_t$, 
\[ X_{t+1 } = {\rm ord} \{ U_{t+1} , X_t^{(2)}, X_t^{(3)}, \ldots, X_t^{(N)} \} ;\]
in other words, at each step,
 discard the {\em smallest} value  and replace it by a $U[0,1]$
random variable. To make clear the dependence on the model parameter
$N$, we  write  $\Pr_N$ for the probability measure associated with this model
and $\Exp_N$ for the corresponding expectation.

It is natural to anticipate that $X_t$ should approach (as $t \to \infty$)
a limiting
(stationary) distribution; we show in this section that this is indeed the case.
Assuming such a stationary distribution exists, and is unique, we can guess what it must be:
the distribution of the random vector $(U, 1, 1, 1, \ldots, 1)$ (a
 $U[0,1]$ variable followed by $N-1$ units) is invariant
under the evolution of the Markov chain. The process $X_t$ itself lives on a relatively complicated state-space,
and at first sight it might seem that some fairly sophisticated argument would be needed to show
that it has a unique stationary distribution. In fact, we can reduce the problem to a  simpler
problem on a finite
state-space as follows.

For each $s \in [0,1]$, define the {\em counting function}
\begin{equation}
\label{Ndef}  C^N_t (s) := \# \{ i \in \{1 ,2 , \ldots, N \} : X_t^{(i)} \leq s \}
= \sum_{i=1}^N \1 \{ X_t^{(i)} \leq s \} ,\end{equation}
 i.e., $C^N_t(s)$ is the number of values
of magnitude at most $s$ in the system at time $t$. (Here and throughout we use $\# A$ to denote the
number of elements of a finite set $A$.)
Then, for a fixed $t$,
 $X_t$ is characterized by the counting functions $(C^N_t(s))_{s \in [0,1]}$.
For a specific $s$, $C^N_t(s)$ encodes marginal information about the
$X_t^{(k)}$, since the two events $\{ C^N_t (s) \geq k \}$ and $\{ X_t^{(k)} \leq s \}$ are equivalent.
By an analysis of the auxiliary stochastic processes $C^N_t(s)$ we will prove the following result,
which deals with the more general model in which, at each time step, the  point with rank $k$ is replaced.

\begin{proposition}
\label{smallest}
Let $N \in \N$ and $k \in \{1,2,\ldots, N\}$.
For the model in which at each step we replace the $k$th-ranked value by an independent $U[0,1]$
value, we have that, as $t \to \infty$,
\[ (X_t^{(1)}, X_t^{(2)}, \ldots, X_t^{(N)} ) \tod (0, \ldots, 0, U, 1,   \ldots, 1 ) ,\]
where $U$, the $k$th coordinate of the limit vector, has a $U[0,1]$ distribution.
\end{proposition}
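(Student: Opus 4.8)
The plan is to study, for each fixed $s \in (0,1)$, the auxiliary counting process $(C^N_t(s))_{t \ge 0}$ from \eqref{Ndef}, which I claim is a time-homogeneous Markov chain on the finite state space $\{0,1,\ldots,N\}$, and then to recover the behaviour of the order statistics using the equivalence of the events $\{C^N_t(s) \ge j\}$ and $\{X_t^{(j)} \le s\}$. To see the Markov property, write $B_{t+1} := \1\{U_{t+1} \le s\}$, which is independent of the history up to time $t$ and equals $1$ with probability $s$. A short case analysis — distinguishing whether the value removed at step $t+1$, namely $X_t^{(k)}$, is at most $s$ or exceeds $s$, which is the same as asking whether $C^N_t(s) \ge k$ or $C^N_t(s) \le k-1$ — gives the recursion
\[ C^N_{t+1}(s) \;=\; C^N_t(s) \;-\; \1\{ C^N_t(s) \ge k \} \;+\; B_{t+1}. \]
(Here one uses that $C^N_t(s)=c$ with $1\le c\le N-1$ forces $X_t^{(c)}\le s<X_t^{(c+1)}$, so ties among the order statistics at the level $s$ cause no difficulty.) Since the increment depends only on $C^N_t(s)$ and the fresh, independent variable $B_{t+1}$, this is indeed a Markov chain.

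From the recursion, from a state $c \ge k$ the chain moves to $c-1$ or stays, and from a state $c \le k-1$ it moves to $c+1$ or stays; hence the two-point set $\{k-1,k\}$ is closed, and from any starting state it is reached after an almost-surely finite number of steps (a sum of geometric waiting times, using $0 < s < 1$). Once the chain is in $\{k-1,k\}$ the recursion collapses to $C^N_{t+1}(s) = (k-1) + B_{t+1}$, so the chain becomes an i.i.d.\ sequence; consequently, for every $s \in (0,1)$,
\[ \Pr_N( C^N_t(s) = k ) \to s \quad\text{and}\quad \Pr_N( C^N_t(s) = k-1 ) \to 1-s \qquad (t \to \infty), \]
while $\Pr_N(C^N_t(s) \le k-2) \to 0$ and $\Pr_N(C^N_t(s) \ge k+1) \to 0$.

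It remains to translate this back. For $1 \le j \le k-1$ we have $\Pr_N(X_t^{(j)} \le s) = \Pr_N(C^N_t(s) \ge j) \ge \Pr_N(C^N_t(s) \ge k-1) \to 1$ for every $s \in (0,1)$, so $X_t^{(j)} \toP 0$ (consistent with the elementary observation that for $j<k$ the sequence $X_t^{(j)}$ is almost surely non-increasing, since the $j$th order statistic is never the value removed); symmetrically, for $k+1 \le j \le N$, $\Pr_N(X_t^{(j)} \le s) = \Pr_N(C^N_t(s) \ge j) \le \Pr_N(C^N_t(s) \ge k+1) \to 0$, so $X_t^{(j)} \toP 1$; and $\Pr_N(X_t^{(k)} \le s) = \Pr_N(C^N_t(s) \ge k) = \Pr_N(C^N_t(s) = k) \to s$ for all $s \in (0,1)$, which says exactly that $X_t^{(k)} \tod U[0,1]$. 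Joint convergence of the whole vector to $(0,\ldots,0,U,1,\ldots,1)$ then follows from the standard fact (a Slutsky-type argument) that convergence in distribution of one coordinate together with convergence in probability of the others to constants implies joint convergence in distribution. The argument has no real obstacle: the single genuine idea is the reduction to the finite chain $C^N_t(s)$ and the observation that it is absorbed into $\{k-1,k\}$, after which it degenerates to coin-tossing. The only points that need a little care are the case analysis behind the displayed recursion (including ties), the almost-sure finiteness of the absorption time, and the passage from marginal to joint convergence.
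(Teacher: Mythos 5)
Your proposal is correct and follows essentially the same route as the paper: reduce to the auxiliary finite-state Markov chain $C^N_t(s)$, observe that $\{k-1,k\}$ is the unique recurrent class with limiting probabilities $(1-s,s)$, and translate back to the order statistics, finishing with a Slutsky-type argument for joint convergence. The one pleasant micro-improvement is your observation that after absorption the recursion degenerates to $C^N_{t+1}(s)=(k-1)+B_{t+1}$, making the post-absorption sequence literally i.i.d.\ and giving the limit distribution without invoking any general stationary-distribution theory.
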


If $k = k(N)$ is such that $k(N)/N \to \theta \in [0,1]$ as $N \to \infty$,
a consequence of Proposition \ref{smallest} is that the distribution
of a uniformly chosen point converges (as $t \to \infty$ and then $N \to \infty$)
to the distribution with an atom of mass $\theta$ at $0$ and an atom of mass $1-\theta$ at $1$. For example,
if we always replace a {\em median} value, $\theta = 1/2$ and the limit distribution
has two atoms of size $1/2$ at $0$ and $1$. 

\begin{proof}[Proof of Proposition \ref{smallest}.]
It is not hard to see that $C^N_t(s)$ is a Markov chain on $\{0,1,2,\ldots, N\}$. 
The transition probabilities $p^s_N (n,m) := \Pr_N [ C^N_{t+1}(s) = m \mid C^N_t(s) = n]$ are given
for $n \in \{0,\ldots, k-1\}$ by 
$p^s_N(n,n) = 1-s$ and $p^s_N(n,n+1) = s$, and for 
$n \in \{k,\ldots,N\}$ by
$p^s_N(n,n)= s$ and $p^s_N(n,n-1) = 1-s$.
For $s  \in (0,1)$ the Markov chain is reducible and has a single recurrent class consisting of the states $k-1$ and $k$.
It is easy to compute the stationary distribution and  for $s \in (0,1)$ we obtain, 
\[ \lim_{t \to \infty} \Pr_N [ C^N_t(s) = n ] = \begin{cases} 1 -s & \textrm{ if } n =k-1\\
s & \textrm{ if } n=k\\
0 & \textrm{ if } n \notin \{k-1,k\} \end{cases}, \]
by standard Markov chain
 theory. 
In particular, for $s \in (0,1)$,
\[ \lim_{t \to \infty} \Pr_N [ X_t^{(k)} \leq s ] = \lim_{t \to \infty} \Pr_N [ C^N_t(s) \geq k ] = s .\]
That is, $X_t^{(k)}$ converges in distribution to a $U[0,1]$ variable.
Moreover, if $k > 1$, for any $s \in (0,1)$,
$\Pr[ X_t^{(k-1)} \leq s] = \Pr [ C_t^N(s) \geq k-1] \to 1$, which implies that
$X_t^{(k-1)}$ converges in probability to zero. Similarly, if $k < N$, 
for any $s \in (0,1)$,
$\Pr[ X_t^{(k+1)} \leq s] = \Pr [ C_t^N(s) \geq k+1] \to 0$, which implies that
$X_t^{(k+1)}$ converges in probability to 1.
Thus we have proved the marginal result that, as $t \to \infty$, for $U$ a $U[0,1]$ random variable,
\[ X_t^{(i)} \to 0 , ~ (i < k), ~~~ X_t^{(k)} \to U, ~~~  X_t^{(i)} \to 1, ~(i > k) ,\]
 in distribution.
  Then the Cram\'er--Wold device (convergence in distribution
  of an $N$-dimensional random vector is implied by convergence in distribution
  of all linear combinations of its components: see
  e.g.\ \cite[p.\ 147]{durrett})
  together with Slutsky's theorem (if $Y_n$ converges
  in distribution to a random limit $Y$ and $Z_n$ converges
  in distribution to a deterministic limit $z$, then
  $Y_n + Z_n$ converges in distribution to
  $Y+z$: see e.g.\ \cite[p.\ 72]{durrett})
  enable us to deduce the joint convergence.
\end{proof}

\begin{rmk} This simple example shows special features that will not recur in the general case.
(i) Here we obtained a result for any fixed $N \geq 1$;
in the general case, we will typically state results as $N \to \infty$.
(ii) Since all but one of the $X^{(i)}_t$ had a degenerate   limit distribution, we were
able to use a soft argument to deduce convergence of the joint distribution
of $(X^{(1)}_t, \ldots, X^{(N)}_t)$ from the convergence of the marginal
distributions. 
\end{rmk}

\section{Rank-driven processes and threshold behaviour}
\label{model}

In this section we give a general definition of a {\em rank-driven process}
and present some fundamental results on its asymptotic properties.
Fix $N$ (the number of points) and $K \in \{1,\ldots, N\}$
(the  number of replacements at each step).
Define the set
\[ \I^K_N := 
\{ 1,2,\ldots, N \}^K  .\]
The model will be specified by a {\em selection distribution}.
Let $R^N$ denote a random $K$-vector with distinct components in $\{1,\ldots,N\}$.
In components, write $R^N = (R^N_1,\ldots,R^N_K)$. We suppose that $R^N$ is {\em exchangeable}, i.e., its distribution is invariant
under permutations of its components. 
The distribution of $R^N$ can be described by a 
probability mass function $\kappa_N : \I^K_N  \to [0,1]$ that is symmetric under permutations of its arguments,
so $\Pr_N[ R_1^N = i_1, \ldots, R_K^N = i_K] = \kappa_N (i_1, \ldots, i_K)$.

We  define a Markov chain $(X_t)_{t \in \Z^+}$ of
ranked sequences $X_t = (X_t^{(1)}, \ldots, X_t^{(N)})$.
The initial distribution $X_0$ can be arbitrary.
The randomness of the process will be introduced via
independent $U[0,1]$ random variables  $U_1, U_2, \ldots$ 
and independent copies of $R^N$, which we denote by
$R^N(1), R^N(2), \ldots$. In components, write
$R^N(t) = (R^N_1(t), \ldots, R^N_K(t))$. The transition law of the Markov
chain is as follows.

Given $X_t$, discard the elements of (distinct) ranks specified
by $R^N_1(t+1), \ldots, R^N_K(t+1)$ and replace
them by $K$ new independent $U[0,1]$ random variables, namely $U_{Kt+1}, \ldots U_{Kt+K}$;
 then rank the new sequence.
That is, we take $X_{t+1}$ to be
\[ \ord \left(X_t^{(1)}, X_t^{(2)}, \ldots, X_t^{(R^N_1(t+1) -1)},
U_{Kt+1}, X_t^{(R^N_2(t+1)+1)}, \ldots, U_{Kt+K}, X_t^{(R^N_K(t+1)+1)} , \ldots, X_t^{(N)} \right) .\]
Note that ties are permitted.

For $i \in \N$ let $g_N(i) := \Pr_N [ R_1^N = i]$,
the marginal distribution of a specific component of $R^N$. Denote the corresponding
distribution function by
\begin{equation}
\label{GNdef}
 G_N(n) := \Pr_N [ R_1^N \leq n] = \sum_{i=1}^n g_N (i) = \sum_{i_1=1}^n \sum_{i_2=1}^N \cdots \sum_{i_K=1}^N \kappa_N (i_1,i_2, \ldots, i_K).\end{equation}
We make some further assumptions on the selection distribution. Assumption (A1) will ensure
that an irreducibility property holds,
excluding some degenerate cases, while (A2) regulates the $N \to \infty$ behaviour of the selection rule.
\begin{itemize}
\item[(A1)] If $K=1$, suppose that $g_N (i) >0$ for 
 all $i \in \{1,\ldots,N\}$. If $K \geq 2$, suppose that $g_N(1) >0$.
\item[(A2)] Suppose that for any $k \leq K$, 
for all distinct $i_1, \ldots, i_k \in \N$, the limit
$\kappa(i_1,\ldots, i_k) := \lim_{N \to \infty} \Pr_N [ R^N_1 = i_1,\ldots, R^N_k =i_k ]$ 
exists.
\end{itemize}

Note that the limits in (A2) 
need not constitute proper distributions on $\N^k$: there
may be some loss of mass. Indeed, the possibility of a defective distribution
as the limit in (A2) plays a central role in the asymptotics of the rank-driven process,
as we shall describe below. A proper distribution
can be recovered on $(\N \cup \{ \infty \})^K$ by correctly accounting for the lost mass,
and then (A2) can be interpreted as saying that $R^N$ converges in distribution
to a random vector on $(\N \cup \{ \infty \})^K$: see Section \ref{sec:genlimitchain} for details.
 A consequence of (A2) is that
\begin{equation}
\label{Gdef}
\lim_{N \to \infty} g_N (n) = g(n)  \textrm{  and  } \lim_{N \to \infty} G_N (n) = G(n)
\end{equation}
exist  for all $n \in \N$;
then 
 $G$ is a (possibly defective) distribution function on $\N$. (Note that $g(i) = \kappa (i)$.)
  Given (A2), we make an assumption
 on $g$ analogous to (A1):
 \begin{itemize}
\item[(A3)] If $K=1$, suppose that $g (i) >0$ for 
 all $i \in \N$. If $K \geq 2$, suppose that $g (1) >0$.
 \end{itemize}
 
 We will show that a crucial parameter for the asymptotics of the process is
\begin{equation}
\label{sstar} s^* := \lim_{n \to \infty} G(n) =  \lim_{n \to \infty} \lim_{N \to \infty} G_N (n) .\end{equation}
If (A2) holds, then the $N$-limit exists, and $s^* \in [0,1]$ is well-defined. The value of $s^*$ captures
the `asymptotic atomicity' of $G_N$ in a certain sense.

Before stating our first results, we describe some concrete examples.
For specifying our examples, it is 
  more convenient to work with a version of $\kappa_N$ on ranked sequences,
 namely $\gamma_N$ defined for $i_1 < \cdots < i_K$ by
$\gamma_N ( i_1, \ldots, i_K ) = K! \kappa_N (i_1, \ldots, i_K )$. With this notation, note that
\begin{align}
\label{ggamma} g_N (i) = \frac{1}{K} \left( \sum_{i < i_2 < i_3 < \cdots < i_K} \gamma_N ( i,i_2,i_3,\ldots, i_K) + \sum_{i_2 < i < i_3 < \cdots < i_K}
\gamma_N (i_2, i, i_3, \ldots, i_K) \right.
\nonumber\\ \left. + \cdots +  \sum_{i_2 < i_3 < \cdots < i_K < i}
\gamma_N (i_2, i_3, \ldots, i_K, i) \right) ,\end{align}
where each sum is over $i_2, i_3, \ldots, i_K \in \{1,\ldots,N\}$ satisfying the
given rank constraints.

We describe three examples, by giving the non-zero values of either $\gamma_N$ or $\kappa_N$, as convenient; 
(E1) was discussed in Section \ref{warmup}, while we study examples (E2) and (E3) in detail in Section \ref{main}.

\paragraph{Example (E1).}
 Take $K=1$ and $\gamma_N(k) =1$, i.e., replace the $k$th ranked element only each time. In this case $g_N(k) =1$ as well.

\paragraph{Example (E2).}
 Let $K=2$ and $\gamma_N(1,j) = \frac{1}{N-1}$ for $j \in \{2,\ldots, N\}$, i.e.,
each time we replace the minimal element and one other uniformly chosen point.
This model has been studied by \cite{deboer} and others. From (\ref{ggamma}) we have that in this case $g_N(1) = 1/2$ and, for $i \in \{2,\ldots,N\}$,
$g_N(i) = \frac{1}{2(N-1)}$. Moreover, $g(1) = 1/2$ and $g(i) =0$ for $i \neq 1$.

\paragraph{Example (E3).} (A generalization of (E2).)
Let $K \geq 2$ and let $\phi_N$ be a symmetric
 probability mass function on $\I^{K-1}_N$. Set
 $\kappa_N(1,i_2, \ldots, i_K) = K^{-1} \phi_N (i_2, \ldots, i_K)$.
So now we replace the minimal element and $K-1$ other randomly chosen points,
where the distribution on the `other' points is given by $\phi_N$.
Then $g_N(1) = 1/K$ and, for $i \in \{2, \ldots, N\}$, $g_N(i) = \frac{K-1}{K} f_N(i)$
where $f_N(i) = \sum_{i_3, \ldots, i_K} \phi_N ( i, i_3,\ldots, i_K)$. Write $F_N (n) = \sum_{i=1}^n f_N(i)$.
Assume that $F (n) = \lim_{N \to \infty} F_N(n)$ exists for all $n$,
and set $\alpha = \lim_{n \to \infty} F(n) \in [0,1]$.\\

The assumptions (A1) and (A3) are satisfied by (E2) and (E3), but not (E1), while
(A2) is satisfied by (E1), (E2), and (E3).

We will work with the counting functions defined by (\ref{Ndef}). Our first main result, Theorem \ref{thm00} below,
demonstrates a phase transition in the asymptotic behaviour of the system at the threshold value $s = s^*$;
note that part of the  theorem is the non-trivial statement that $\lim_{N \to \infty} \lim_{t \to \infty} \Exp_N [ C^N_t (s) ]$
exists in $[0,\infty]$.
 
\begin{theo}
\label{thm00}
Suppose that (A1), (A2), and (A3) hold, so that $s^*$ given by (\ref{sstar})
exists in $[0,1]$.
Then
\begin{equation}
\lim_{N \to \infty} \lim_{t \to \infty} \Exp_N [ C^N_t (s) ]  \begin{cases}
< \infty & \textrm{ if } s < s^*  \\
= \infty & \textrm{ if } s >   s^*  \end{cases}.
\end{equation}
\end{theo}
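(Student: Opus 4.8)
The plan is to analyse, for each fixed $s \in (0,1)$, the auxiliary process $C^N_t(s)$ as a Markov chain on $\{0,1,\ldots,N\}$, exactly as in the warm-up, but now tracking the $K$-fold replacement dynamics. First I would write down the one-step transition probabilities of $C^N_t(s)$: given $C^N_t(s) = n$, the $K$ discarded ranks $R^N_1(t+1),\ldots,R^N_K(t+1)$ split into those $\le n$ (which sit below $s$) and those $> n$; each discarded value is independently replaced by a $U[0,1]$, landing below $s$ with probability $s$. If $J$ of the discarded ranks are $\le n$, then $C^N_{t+1}(s) = n - J + B$ where $B \sim \mathrm{Bin}(K,s)$, so the conditional mean increment is $\Exp_N[C^N_{t+1}(s) - C^N_t(s) \mid C^N_t(s) = n] = Ks - \Exp_N[J \mid C^N_t(s) = n]$. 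The quantity $\Exp_N[J \mid C^N_t(s)=n]$ is the expected number of selected ranks that fall in $\{1,\ldots,n\}$, which by exchangeability equals $K \cdot \Pr_N[R^N_1 \le n \mid \text{rank config}]$-type expressions; in the simplest reading it is $K G_N(n)$ when the chain is at level $n$, up to the fact that which ranks lie below $s$ depends on the order statistics, not just on $n$ — but since $C^N_t(s) = n$ means exactly the $n$ smallest order statistics are $\le s$, the event $\{R^N_j \le n\}$ is precisely the event that the $j$th selected value is below $s$, so $\Exp_N[J \mid C^N_t(s)=n] = K G_N(n)$ exactly. Hence the drift is $K(s - G_N(n))$.

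With the drift identified as $K(s - G_N(n))$, the structure becomes clear: since $G_N$ is nondecreasing in $n$ with $G_N(n) \uparrow$ some limit that, after sending $N\to\infty$ and then $n\to\infty$, equals $s^*$, the drift is positive (pushes $C$ up) while $G_N(n) < s$ and negative while $G_N(n) > s$. For $s > s^*$: for every $N$, $G_N(n) \le \lim_m G_N(m) \le$ (its $N$-limit evaluated at $\infty$) which is $\le s^*< s$ eventually — more carefully, I would argue that for $s > s^*$ there is no level at which the drift becomes negative in the $N\to\infty$ limit, forcing $\lim_{t\to\infty}\Exp_N[C^N_t(s)]$ to grow without bound as $N\to\infty$; the clean way is a Foster–Lyapunov / Kac-type argument showing $\lim_{t\to\infty}\Exp_N[C^N_t(s)] \ge$ some quantity tending to $\infty$, e.g. by comparison with a random walk with nonnegative drift up to level $m$ and noting $G_N(m) \to G(m) < s$. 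For $s < s^*$: pick $m$ with $G(m) > s$ (possible since $G(n)\uparrow s^* > s$), so for $N$ large $G_N(m) > s$, giving uniform negative drift $K(s - G_N(n)) \le -K(G_N(m)-s) =: -\eps < 0$ for all $n \ge m$; the function $V(n) = n$ is then a Foster–Lyapunov function establishing positive recurrence with an exponential tail bound on the stationary distribution uniform in $N$ (standard, e.g. via $\Exp[e^{\lambda C}]$ bounds), whence $\lim_{t\to\infty}\Exp_N[C^N_t(s)]$ is bounded uniformly in $N$, so its $N\to\infty$ limit is finite. Existence of the iterated limit in the boundary-free regions would come either from monotonicity in $N$ (if available via a coupling) or, more robustly, by invoking the Appendix's result on convergence of stationary distributions for uniformly ergodic families — the uniform exponential drift condition is exactly the hypothesis that makes the family uniformly ergodic.

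The main obstacle, I expect, is making the two one-sided statements genuinely quantitative and uniform in $N$, and in particular establishing that the iterated limit $\lim_{N\to\infty}\lim_{t\to\infty}\Exp_N[C^N_t(s)]$ \emph{exists} (not merely that it is eventually large or eventually bounded). For $s < s^*$ the uniform geometric drift gives a uniform-in-$N$ bound on all moments of the stationary $C^N_\infty(s)$, and then I would feed the family $(C^N_t(s))_t$ into the Appendix's uniform-ergodicity convergence theorem to get convergence of the stationary means; the delicate point is checking that theorem's hypotheses hold with constants independent of $N$, which the drift estimate $-\eps$ supplies. For $s > s^*$ the subtlety is the reverse — near the boundary the drift can be small or change sign at high levels depending on $\kappa_N$, so I would not try to show the stationary chain is transient for fixed $N$ (it need not be), but rather show directly that $\lim_{t\to\infty}\Exp_N[C^N_t(s)]$, whatever it is, is at least $c(N)$ with $c(N)\to\infty$; a convenient route is a renewal/Kac argument relating the stationary mean to expected excursion lengths away from $0$, using that the walk has nonnegative drift on the long stretch $\{n : G_N(n) < s\}$ whose right endpoint tends to $\infty$ as $N\to\infty$ (this is exactly what $s > s^* = \lim_N\lim_n G_N$ encodes). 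Handling the two boundary cases $s \in \{0,1\}$ and verifying irreducibility of $C^N_t(s)$ (where (A1)/(A3) enter) are routine and I would dispatch them briefly.
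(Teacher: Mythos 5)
Your proposal is essentially correct, and the $s<s^*$ half matches the paper's argument closely: you correctly identify the drift $\Exp_N[C^N_{t+1}(s)-C^N_t(s)\mid C^N_t(s)=n]=K(s-G_N(n))$ (resolving the potential dependence on the order statistics exactly as the paper does, via the equivalence $\{R^N_j\le n\}\Leftrightarrow\{j\text{th selected value}\le s\}$ given $C^N_t(s)=n$), you obtain a uniform-in-$N$ negative drift above a fixed level, and you invoke the Appendix's uniform-ergodicity convergence theorem to pass $\pi^s_N\to\pi^s$ and conclude the iterated limit exists and is finite. That is precisely the skeleton of the paper's Lemma~\ref{lem:limits} plus the display (\ref{pimean}).

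Where you diverge is the $s>s^*$ half. You propose a Kac/renewal route: show the return time to $0$ grows with $N$ (because the drift is nonnegative on an ever-lengthening initial stretch $\{n:G_N(n)<s\}$) and then read off divergence of the stationary mean through the occupation-time representation. This does work — it is essentially the combination of the paper's Lemma~\ref{lem:null} (a supermartingale lower bound on the escape probability) together with the occupation-time identity $\sum_{m\le y}\pi^s_N(m)=\Exp_N[\sum_{t<\tau_N}\1\{C^N_t\le y\}]/\Exp_N[\tau_N]$, whose numerator is bounded in $N$ and whose denominator diverges; then $\sum_n n\,\pi^s_N(n)\ge y\,\Pr_N[C^N_\infty>y]\to y$. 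The paper, however, uses a shorter argument for this theorem that bypasses excursions entirely: it takes expectations in the drift identity, notes that the stationary increment of $\Exp_N[C^N_t(s)]$ is zero, and deduces directly that $\Pr_N[C^N_\infty(s)>x]\ge\delta$ for a $\delta>0$ independent of $x$ and of $N\ge N_0(x)$, whence $\lim_t\Exp_N[C^N_t(s)]\ge\delta x$. The paper saves the excursion/Kac machinery for Theorem~\ref{thm3}, where it is genuinely needed. Your route buys nothing extra here but also has no gap; if you follow it through you should be careful to separate the two quantitative ingredients (a $\Pr_N[\nu_x<\tau_N]\ge p$ uniform in $N$ and $x$, plus a uniform bound on $\Exp_N\sum_{t<\tau_N}\1\{C^N_t\le y\}$ for fixed $y$), since "long excursions $\Rightarrow$ large stationary mean" is not immediate from Kac's formula alone.
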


Theorem \ref{thm00} shows that $s^*$ is a threshold value for the model in the sense that
\[ s^* = \sup \{ s \geq 0 : \lim_{N \to \infty} \lim_{t \to \infty} \Exp_N [ C^N_t (s) ] < \infty \} \]
is well defined (with the
convention $\sup \emptyset = 0$). Example (E1) has $s^*=1$, although Theorem \ref{thm00} does not
apply directly (since (A1) and (A3) fail). Example (E2) has $s^*=1/2$, while Example (E3) has $s^* = \frac{1}{K} ( 1 + (K-1) \alpha )$.

For the next result we assume that the distribution $G_N$
given by (\ref{GNdef}) is `eventually uniform'
in a certain sense; roughly speaking we will suppose that $g_N(n) \approx \frac{1-s^*}{N}$
for $n$ large enough. The precise condition that we will use is as follows.
\begin{itemize}
\item[(A4)] Suppose that there exists $n_0 \in \{2,3,\ldots\}$ such that
\[ \lim_{N \to \infty} \sup_{n_0 \leq n \leq N } \left| \frac{ N ( G_N(n) -s^*)}{n-n_0+1} - (1-s^*) \right| = 0 .\]
\end{itemize}
For instance, Example (E2) satisfies condition (A4)
 with $s^* = 1/2$ and $n_0=2$, since $G_N(n) = \frac{1}{2} + \frac{n-1}{2(N-1)}$. In Example (E3), 
 $G_N(n) = \frac{1}{K} + \frac{K-1}{K} F_N(n)$, so that 
 condition (A4) holds if $F_N(n)$ satisfies a similar condition, namely
 \begin{equation}
 \label{evun2}
  \lim_{N \to \infty} \sup_{n_0 \leq n \leq N } \left| \frac{ N ( F_N(n) -\alpha )}{n-n_0+1} - (1-\alpha) \right| = 0 .\end{equation}

Our next result shows the threshold phenomenon at the `$O(N)$' scale.
We can define a threshold parameter
\begin{equation}
\label{sstar2} s^\# := \sup \{ s \geq 0 : \lim_{N \to \infty} \lim_{t \to \infty} \left( N^{-1} \Exp_N [ C^N_t(s) ] \right) = 0 \}.\end{equation}
If both $s^*$ and $s^\#$, as given by (\ref{sstar}) and (\ref{sstar2}) respectively, are well-defined, then
clearly $s^* \leq s^\#$. Theorem \ref{thm3} shows that, under assumption (A4), $s^* = s^\#$; in other words,
the transition is sharp. 
 We use the notation
\begin{equation}
\label{Vdef} V (s) := \begin{cases} 0 & \textrm{ if } s < s^*  \\
\frac{s-s^*}{1-s^*} & \textrm{ if } s \geq  s^*  \end{cases} . \end{equation}

\begin{theo}
\label{thm3}
Suppose that (A1), (A2), (A3), and (A4) hold.
With $V(s)$ as given by (\ref{Vdef}),
we have that for any $s \in [0,1]$,
\begin{align}
\label{mu}
\lim_{N \to \infty} \lim_{t \to \infty} \left( \frac{ \Exp_N [ C^N_t (s) ]}{N} \right)   = V(s) .
\end{align}
\end{theo}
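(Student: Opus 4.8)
The plan is to pass to the counting-function Markov chain. For fixed $N$ and $s\in(0,1)$, $C^N_t(s)$ is a Markov chain on $\{0,1,\ldots,N\}$: given $C^N_t(s)=n$, the number $D_n$ of the $K$ discarded ranks lying in $\{1,\ldots,n\}$ has a law depending only on $n$ and $\kappa_N$, while the $K$ fresh $U[0,1]$ values contribute an independent $\mathrm{Bin}(K,s)$ many values $\le s$, so $C^N_{t+1}(s)=n-D_n+B$ with $B\sim\mathrm{Bin}(K,s)$ independent of $D_n$. As in the proof of Theorem~\ref{thm00}, under (A1) this chain has a unique stationary law $\pi_N=\pi_N^{(s)}$ and $\Exp_N[C^N_t(s)]\to\bar c_N(s):=\Exp_{\pi_N}[C^N(s)]$ as $t\to\infty$; the endpoint limits $\bar c_N(0),\bar c_N(1)$ exist trivially, with $C^N_t(1)\equiv N$. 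For $s<s^*$ the assertion is immediate from Theorem~\ref{thm00}, since there $\bar c_N(s)$ is bounded in $N$; for $s=1$ it is trivial. Since $s\mapsto C^N_t(s)$ is pathwise non-decreasing, $\bar c_N(s^*)\le\bar c_N(s)$ for $s\in(s^*,1)$, so the assertion for all $s\in(s^*,1)$ forces $\limsup_N\bar c_N(s^*)/N\le\lim_{s\downarrow s^*}V(s)=0=V(s^*)$. Hence it suffices to treat $s\in(s^*,1)$, where $0<V(s)<1$.

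For such $s$ the argument rests on two facts. First, by exchangeability of $R^N$, $\Exp[D_n]=KG_N(n)$, so the one-step mean drift is
\[
\Exp_N\bigl[C^N_{t+1}(s)-C^N_t(s)\mid C^N_t(s)=n\bigr]=K\bigl(s-G_N(n)\bigr);
\]
averaging this against $\pi_N$ and using stationarity (the state space being finite) gives the identity
\[
\Exp_{\pi_N}\bigl[G_N(C^N(s))\bigr]=s.
\]
Second, I will establish a uniform lower-tail bound: there is $q_N\to0$ with $\Pr_{\pi_N}[C^N(s)\le n_0-1]\le q_N$. Fix $\eps\in(0,1)$ and set $m=m(N):=\lfloor(1-\eps)NV(s)\rfloor$, so $m\to\infty$. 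From (A4), which bounds $|G_N(n)-s^*-(1-s^*)(n-n_0+1)/N|$ uniformly on $n_0\le n\le N$ by some $\eps_N\to0$, applied on $n_0\le n\le m$, together with $G_N(n)\le G_N(n_0-1)\to G(n_0-1)\le s^*<s$ on $0\le n<n_0$, one checks that there are $c>0$ and $N_0$ such that for $N\ge N_0$ the drift at $n$ exceeds $c$ for every $n\le m$. Since the jumps are bounded, $|C^N_{t+1}(s)-C^N_t(s)|\le K$, a geometric Foster--Lyapunov argument applies: writing $P$ for the transition kernel and taking $W(n)=\rho^{(m-n)^+}$ with $\rho>1$ close enough to $1$, the drift bound gives $(PW)(n)\le\beta W(n)$ for $n\le m-K$ with some $\beta<1$ independent of $N$, and $(PW)(n)\le b:=\rho^{2K}$ for $n>m-K$; hence $\Exp_{\pi_N}[W(C^N(s))]\le b/(1-\beta)$ uniformly in $N\ge N_0$, and, since $n_0\le m$ for $N$ large, $\Pr_{\pi_N}[C^N(s)\le n_0-1]\le\rho^{-(m-n_0+1)}b/(1-\beta)=:q_N\to0$.

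It remains to combine the two. In $\Exp_{\pi_N}[G_N(C^N(s))]=s$, split the sum over states at $n_0$: the part over $\{0,\ldots,n_0-1\}$ is at most $q_N$, and on $\{n_0,\ldots,N\}$ substitute $G_N(n)=s^*+(1-s^*)(n-n_0+1)/N+e_N(n)$ with $\sup_{n_0\le n\le N}|e_N(n)|=\eps_N\to0$, exactly the content of (A4). Since $n_0$ is fixed and $\pi_N(\{0,\ldots,n_0-1\})\le q_N\to0$, the contributions of states below $n_0$, of the errors $e_N$, and of the $O(1/N)$ shift by $n_0-1$ are all $o(1)$, leaving
\[
s=s^*+(1-s^*)\,\frac{\bar c_N(s)}{N}+o(1)\qquad(N\to\infty).
\]
As $s^*<1$, this gives $\bar c_N(s)/N\to(s-s^*)/(1-s^*)=V(s)$, which together with the reductions above proves the theorem. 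The step I expect to be the main obstacle is the uniform-in-$N$ lower-tail bound: because the state space grows with $N$, one must verify that the Foster--Lyapunov constants $c,\beta,b$ and the base $\rho$ do not degrade as $N\to\infty$, and this is precisely where the \emph{uniformity} in the eventual-uniformity hypothesis (A4) is used, in concert with the bounded jump size $K$. The remainder is the drift identity and bookkeeping around the fixed window $\{0,\ldots,n_0-1\}$.
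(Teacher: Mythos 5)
Your overall architecture matches the paper's: reduce to $s\in(s^*,1)$ (the $s<s^*$ case follows from Theorem~\ref{thm00}, and $s=s^*$ by monotonicity $C^N_t(s^*)\leq C^N_t(r)$ and right-continuity of $V$), use stationarity of the drift identity (\ref{gendrift}) to get $\Exp_{\pi_N}[G_N(C^N(s))]=s$, invoke (A4) to replace $G_N(n)$ by $s^*+(1-s^*)(n-n_0+1)/N+o(1)$ on $\{n\geq n_0\}$, and show that the stationary mass on the fixed window $\{0,\ldots,n_0-1\}$ vanishes as $N\to\infty$. The one ingredient where you take a genuinely different route is the last step. The paper proves Lemma~\ref{lem:null} (a supermartingale / exit-time argument giving $\Exp_N[\tau_N]\to\infty$) and combines it with the occupation-time representation $\pi_N^s(x)=E_N[L_N(x)]/E_N[\tau_N]$ to deduce $\Pr_{\pi_N}[C\leq n]\to 0$ for each fixed $n$; this only uses (A2), not (A4). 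You instead verify that (A4) forces uniformly positive drift on the whole growing interval $\{0,\ldots,\lfloor(1-\eps)NV(s)\rfloor\}$ and run a geometric Foster--Lyapunov bound ($W(n)=\rho^{(m-n)^+}$, $\pi_N W\le b/(1-\beta)$ uniformly in $N$) directly against the stationary measure. Both are standard and both are correct here; yours yields a quantitatively stronger conclusion (the stationary lower tail decays exponentially in $N$, not merely like $1/\Exp_N[\tau_N]$), at the cost of leaning on (A4) twice — once for the drift, once for the final split — which is harmless since (A4) is assumed. You should make the $\pi_N W=\pi_N PW$ step explicit (it is legitimate because the state space $\{0,\ldots,N\}$ is finite, so $\pi_N W<\infty$ automatically), and note that the positive-drift constant on $\{0,\ldots,n_0-1\}$ comes from (A2) via $G_N(n_0-1)\to G(n_0-1)\leq s^*<s$ rather than from (A4), so that the two regimes use different bounds, but the minimum constant is still positive; with those two remarks spelled out the argument is complete.
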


 Another way to interpret Theorem \ref{thm3} is as follows. Let $X_t^*$ denote $X_t^{(M)}$ where $M$ is a random variable with
  $\Pr_N [ M = j] = 1/N$
for $j \in \{1,\ldots,N\}$.
Then
    \[ \Pr_N [ X_t^* \leq s ] = N^{-1} \sum_{i=1}^N \Pr [ X_t^{(i)} \leq s]
    = N^{-1} \Exp \sum_{i=1}^N \1 \{ X_t^{(i)} \leq s \} = N^{-1} \Exp_N [ C^N_t (s) ] ,\]
    by (\ref{Ndef}),
     so that the conclusion of Theorem \ref{thm3} is equivalent to, for $s \in [0,1]$,
  \[ \lim_{N \to \infty} \lim_{t \to \infty} \Pr_N [ X_t^* \leq s ] = V(s) ;\]
  in other words, the marginal distribution of a `typical' point converges
  (as $t \to \infty$ then $N \to \infty$) to a $U[s^*,1]$ distribution.
  Note that some condition along the lines of (A4) is needed for this result to hold:
  see the example in Remark \ref{symm} below.

\begin{rmk}
\label{rho}
In this paper we restrict attention to the case where
the distribution of replacements is $U[0,1]$, but instead of $U_1, U_2, \ldots$
one could take independent copies of a nonnegative
random variable $W$ with distribution function $\rho$.
The results with the $U[0,1]$ distribution immediately generalize to
distributions $\rho$ that are continuous, supported on a single interval, and strictly increasing on
that interval: to see this, note that $(x_1, \ldots, x_N) \mapsto (\rho(x_1), \ldots, \rho(x_N))$
preserves ranks and $\rho(W)$ has a $U[0,1]$ distribution,
so that the dynamics of the process are preserved, up to the change of scale $s \mapsto \rho(s)$.  Thus our results 
immediately extend
to this class of distributions $W$.
\end{rmk}

\begin{rmk}
\label{symm}
Our results can be translated into complementary results by reversing the
ranking and looking at
$N- C_t^N(s)$. Indeed, $N- C_t^N(s)$ counts the number of points in $(s,1]$; 
translating Theorem \ref{thm00} shows that
 under appropriate versions of (A1)--(A3) (in which conditions
 on $g_N(n)$ are replaced by conditions on $g_N(N-n+1)$) 
the threshold
\[ s_* = \inf \{ s \leq 1 : \lim_{N \to \infty} \lim_{t \to \infty} \Exp_N [ N - C_t^N(s) ] < \infty \} \]
is given by
\[ s_* = \lim_{n \to \infty} \lim_{N \to \infty}   G_N (N-n)   .\]
For example, suppose that $K=2$ and we always replace the smallest and the largest points, i.e.,
$g_N(1)=g_N(N) = 1/2$. Then $G_N(n) = (1 + \1\{n = N\})/2$, so that $G(n) = 1/2$ and $s^* = 1/2$.
But also, $s_* = 1/2$. So the expected number of points in any interval not containing $1/2$
remains finite as $N \to \infty$; in other words, the marginal distribution of a typical point
converges to a unit point mass at $1/2$. This example also serves to demonstrate
that Theorem \ref{thm3} may fail if (A4) does not hold.
\end{rmk}

\section{Detailed example: Replace the minimum and one other}
\label{main}
 
In this section we present some more specific results to complement
our general results from Section \ref{model}.
To do so, we 
specialize to the $K=2$ case of Example (E3) from Section \ref{model},
in which we replace the smallest value and choose
the other value to replace from $\{2,\ldots,N\}$ according to
a probability distribution $f_N$. Write
\begin{equation}
\label{FNdef}
 F_N (k) := \sum_{j=2}^k f_N(j),
 \end{equation} 
 for the corresponding
distribution function, adopting the usual convention
that an empty sum is zero, so that $F_N(0) =F_N(1)= 0$.

 In the general set-up of Section \ref{model},
we have $\kappa_N (1, i) = f_N(i)/2$, $g_N (1) =1/2$ and $g_N(i) = f_N(i)/2$ for $i \in \{2,\ldots, N\}$.
Here, assumption (A1) and (A3) are automatically satisfied;
Assumption (A2)  becomes a condition on $f_N$ (or $F_N$), namely
 that 
\begin{equation}
\label{Fdef}
\lim_{N \to \infty} F_N (n) = F(n)
\end{equation}
exists for all $n \geq 2$. The present version of (A2) is then:
\begin{itemize}
\item[(A2$'$)] Suppose that (\ref{Fdef}) holds.
\end{itemize}
Under (A2$'$), 
\begin{equation}
\label{alpha}\alpha := \lim_{n \to \infty}  F (n) \end{equation}
exists in $[0,1]$. Indeed, since $G_N(n) = (1+F_N(n))/2$,
we have that (A2$'$) implies that $s^*$ given by (\ref{sstar})
satisfies $s^* = \frac {1+\alpha}{2}$. 
 
Before stating our results, we comment briefly on their relation to previous work in the literature.
The model of this section includes that studied by de Boer {\em et al.} \cite{deboer} amongst others (see e.g.\ \cite[\S 5.2.5]{jensen});
the model of \cite{deboer} is the special case where $F_N(n) = \frac{n-1}{N-1}$, which
satisfies (A2) with $\alpha =0$. Thus the $\alpha=0$ cases of our results 
are not surprising in view of the (not completely rigorous) arguments in \cite{deboer},
or the heuristic analysis in \cite[\S 5.2.5]{jensen} that
neglects correlations between the $X_t^{(k)}$, but our results
are more general even in the case $\alpha=0$, and we show explicitly the dependence
of the phase transition on $F_N$ via the parameter $\alpha$.
Moreover, one aim of the present work is to
give a more rigorous approach
 to the results of \cite{deboer} in the present considerably more general setting. 

In this setting, the following result is immediate from Theorem \ref{thm00}.

\begin{theo}
\label{thm0}
Suppose that (A2$'$) holds.
Then
\begin{equation}
\lim_{N \to \infty} \lim_{t \to \infty} \Exp_N [ C^N_t (s) ]  \begin{cases}
< \infty & \textrm{ if } s <  \frac{1+\alpha}{2}  \\
= \infty & \textrm{ if } s >   \frac{1+\alpha}{2}  \end{cases}.
\end{equation}
\end{theo}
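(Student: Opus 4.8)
The plan is to deduce Theorem~\ref{thm0} as a direct corollary of Theorem~\ref{thm00}, which already establishes the required threshold dichotomy for $\lim_{N\to\infty}\lim_{t\to\infty}\Exp_N[C^N_t(s)]$ under hypotheses (A1), (A2), (A3). So the only work is to check that the present special case — $\kappa_N(1,i)=f_N(i)/2$, $g_N(1)=1/2$, $g_N(i)=f_N(i)/2$ for $i\in\{2,\ldots,N\}$ — satisfies those three hypotheses, and then to identify the general threshold $s^*$ from \eqref{sstar} explicitly as $(1+\alpha)/2$.

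First I would verify the hypotheses. Since $K=2$ here and $g_N(1)=1/2>0$, assumption (A1) holds automatically; similarly $g(1)=\lim_{N\to\infty}g_N(1)=1/2>0$, so (A3) holds. For (A2): with $K=2$ we need the limits $\kappa(i_1)$ and $\kappa(i_1,i_2)$ to exist for all (distinct) $i_1,i_2\in\N$. By exchangeability and the form of $\kappa_N$, all mass is carried on pairs containing the rank $1$, so $\kappa_N(i_1,i_2)$ is determined by $f_N$, and $g_N$ is determined by $f_N$ via \eqref{ggamma}. Hence (A2) is exactly equivalent to the statement that $F_N(n)=\sum_{j=2}^n f_N(j)$ converges for every $n$, which is precisely (A2$'$), i.e.\ \eqref{Fdef}. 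This is the step that needs a small amount of care: one must check that convergence of $F_N(n)$ for all $n$ is genuinely equivalent to the componentwise convergence of $\kappa_N$ demanded by (A2), rather than merely implied by it — but since the $\kappa_N$-mass is concentrated on pairs $\{1,i\}$, pointwise convergence of $\kappa_N(1,i)=f_N(i)/2$ is the same as pointwise convergence of $f_N$, which is the same as convergence of the partial sums $F_N(n)$.

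Next I would compute $s^*$. From \eqref{GNdef} and the identity $G_N(n)=(1+F_N(n))/2$ recorded just before the statement, (A2$'$) gives $G(n)=\lim_{N\to\infty}G_N(n)=(1+F(n))/2$ for each $n$. Then by \eqref{sstar} and \eqref{alpha},
\[
 s^* \;=\; \lim_{n\to\infty}G(n) \;=\; \frac{1+\lim_{n\to\infty}F(n)}{2} \;=\; \frac{1+\alpha}{2},
\]
which lies in $[0,1]$ since $\alpha\in[0,1]$. Substituting this value of $s^*$ into the conclusion of Theorem~\ref{thm00} yields exactly the claimed dichotomy: $\lim_{N\to\infty}\lim_{t\to\infty}\Exp_N[C^N_t(s)]<\infty$ for $s<(1+\alpha)/2$ and $=\infty$ for $s>(1+\alpha)/2$.

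There is essentially no hard part here: the content of the theorem is entirely inherited from Theorem~\ref{thm00}, and the proof is just a verification of hypotheses plus a one-line evaluation of $s^*$. If anything, the only place to be slightly careful is confirming that (A2$'$) really does imply the full strength of (A2) — including existence of the joint limit $\kappa(i_1,i_2)$ and not just the marginal $g$ — but this is immediate from the structure of the selection distribution, in which every selected pair contains the rank-$1$ element. Accordingly the proof will be short.
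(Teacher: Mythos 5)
Your proposal is correct and follows exactly the route the paper takes: the paper simply remarks that Theorem~\ref{thm0} ``is immediate from Theorem~\ref{thm00}'', once one observes $G_N(n)=(1+F_N(n))/2$ so that $s^*=(1+\alpha)/2$. You have merely spelled out the hypothesis-checking (A1)--(A3) that the paper leaves implicit.
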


Similarly, we have the following translation of Theorem \ref{thm3}
into this setting. The appropriate version of condition (A4)
is:
\begin{itemize}
\item[(A4$'$)] Suppose that $F_N$ satisfies (\ref{evun2}).
\end{itemize}

\begin{theo}
\label{thm30}
Suppose that (A2$'$) and (A4$'$) hold.
With $V(s)$ as given by (\ref{Vdef}),
\[
\lim_{N \to \infty} \lim_{t \to \infty} \left( \frac{ \Exp_N [ C^N_t (s) ]}{N} \right)   = V(s) , ~~~ s \in [0,1].
\]
\end{theo}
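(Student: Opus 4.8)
The plan is to deduce Theorem \ref{thm30} directly from Theorem \ref{thm3} by checking that the hypotheses of the latter are all met in the present $K=2$ specialization. We have $\kappa_N(1,i) = f_N(i)/2$ with $f_N$ a probability mass function on $\{2,\ldots,N\}$, so $g_N(1) = 1/2 > 0$ and hence (A1) holds automatically (and likewise $g(1) = 1/2 > 0$, so (A3) holds). For (A2), recall $G_N(n) = (1+F_N(n))/2$, so the assumed convergence $F_N(n) \to F(n)$ in (A2$'$) is exactly equivalent to the existence of $\lim_{N\to\infty} G_N(n) = G(n) = (1+F(n))/2$ for all $n$; since the higher-order limits $\kappa(i_1,\ldots,i_k)$ for $k \leq K = 2$ reduce to $g(1)$ and to limits controlled by $F$, assumption (A2) follows. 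Then $s^* = \lim_{n\to\infty} G(n) = \frac{1}{2}(1 + \lim_{n\to\infty}F(n)) = \frac{1+\alpha}{2}$, with $\alpha$ as in (\ref{alpha}).

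Next I would verify (A4). From $G_N(n) = (1+F_N(n))/2$ and $s^* = (1+\alpha)/2$ we get
\[
\frac{N(G_N(n) - s^*)}{n-n_0+1} - (1-s^*)
= \frac{1}{2}\left( \frac{N(F_N(n) - \alpha)}{n-n_0+1} - (1-\alpha) \right),
\]
so the supremum over $n_0 \leq n \leq N$ of the left-hand quantity is exactly half the corresponding supremum for $F_N$. Hence (A4$'$), which posits (\ref{evun2}) for $F_N$, is precisely (A4) in this setting (with the same $n_0$). All four hypotheses of Theorem \ref{thm3} therefore hold, and applying it gives
\[
\lim_{N\to\infty}\lim_{t\to\infty}\left(\frac{\Exp_N[C^N_t(s)]}{N}\right) = V(s), \qquad s \in [0,1],
\]
with $V$ as in (\ref{Vdef}); since $V$ depends on $s^*$ and we have identified $s^* = (1+\alpha)/2$, this is the asserted conclusion.

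There is essentially no obstacle here: Theorem \ref{thm30} is a corollary, and the only content is the bookkeeping translation $G_N \leftrightarrow F_N$ via $G_N = (1+F_N)/2$, together with checking that (A1)--(A4) reduce to (A2$'$) and (A4$'$) (the first two being automatic from $g_N(1)=1/2$). The genuine analytic work — the Foster--Lyapunov argument controlling $\Exp_N[C^N_t(s)]$ for $s<s^*$, the divergence for $s>s^*$, and the uniform-ergodicity limit-interchange from the Appendix — is all contained in the proof of Theorem \ref{thm3}, which we are entitled to invoke. If one wished to make the reduction fully self-contained one would just note in addition that (A2$'$) forces $F$ to be nondecreasing with values in $[0,1]$, so that $\alpha$ in (\ref{alpha}) is well defined and $s^* \in [1/2,1]$, consistent with Theorem \ref{thm0}.
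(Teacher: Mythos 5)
Your proposal is correct and follows exactly the route the paper takes: the paper presents Theorem \ref{thm30} as an immediate ``translation'' of Theorem \ref{thm3}, with no separate proof beyond observing that (A2$'$) and (A4$'$) are the specialization of (A1)--(A4) when $\kappa_N(1,i) = f_N(i)/2$. Your bookkeeping — $g_N(1) = 1/2$ giving (A1) and (A3), $G_N = (1+F_N)/2$ giving (A2) from (A2$'$), and the factor-of-$\tfrac12$ identity showing (A4) is equivalent to (A4$'$) with the same $n_0$ — is precisely what was left implicit, and it is accurate.
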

  
\begin{rmk}
\label{rho2}
If instead of a $U[0,1]$ distribution we use a distribution $\rho$ for
replacement points, as described in Remark \ref{rho},
then the threshold exhibited in Theorem \ref{thm0} becomes
$s^* =\rho^{-1}(\frac{1+\alpha}{2})$;
the inverse $\rho^{-1}$ is well-defined when $\rho$ satisfies
the conditions described in Remark \ref{rho}.
\end{rmk}

Now we move on to our detailed results 
concerning the case $\alpha =0$;
note that $\alpha =0$ if and only if $f_N(n) \to 0$ as $N \to \infty$ for any $n$.
The case $\alpha =0$ includes the discrete uniform case
 (as considered in \cite{deboer})
 in which $f_N(n) = \frac{1}{N-1}$, 
 but includes many other possibilities.
Theorem \ref{thm0} shows that when $\alpha = 0$ the phase transition occurs
at $s^* = 1/2$. 
The next result gives more information, giving an explicit expression
for the limiting equilibrium expectation in the case in which it
is finite.

\begin{theo}
\label{thm0b}
Suppose that  (A2$'$) holds and that $\alpha = 0$.
Then
\begin{equation}
\lim_{N \to \infty} \lim_{t \to \infty} \Exp_N [ C^N_t (s) ]  = \begin{cases}
2 s + \frac{s^2}{1-2s}  & \textrm{ if } 0 \leq s <  1/2   \\
 \infty & \textrm{ if } s \geq   1/2   \end{cases}.
\end{equation}
\end{theo}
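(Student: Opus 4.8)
\textbf{Proof proposal for Theorem \ref{thm0b}.}

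The plan is to work with the counting chain $C^N_t(s)$, which in this case ($K=2$, replace the minimum and one other, with $\alpha=0$) is a Markov chain on $\{0,1,\ldots,N\}$ whose transition probabilities I would compute explicitly. Fix $s \in [0,1/2)$. Conditioning on $C^N_t(s) = n$: the minimum is always among the $n$ small points when $n \geq 1$ (when $n=0$ the minimum is a large point), so one of the two discarded points is small with probability essentially $1$ (for $n\geq 1$), while the second discarded point has rank distributed according to $f_N$; since $\alpha = 0$, the probability that this second point lands in the `small' block $\{1,\ldots,n\}$ is $F_N(n) \to 0$ as $N\to\infty$ for fixed $n$. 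Each of the (one or two) replacement $U[0,1]$ variables is $\leq s$ with probability $s$. So, in the $N\to\infty$ limit, from state $n \geq 1$ we remove exactly one small point and add back two independent Bernoulli$(s)$ points: the chain behaves like a random walk with up-rate $s^2$ (both new points small), stay-rate $2s(1-s)$, down-rate $(1-s)^2$, shifted by the deterministic removal. Combining, from $n\geq 1$ the net drift is $2s - 1 < 0$, and from $n=0$ we move up by the number of new small points among two fresh uniforms (the discarded `minimum' is a large point and another large point is discarded), giving up-rate $s^2+2s(1-s)$ roughly. This identifies, in the $N\to\infty$ limit, a genuinely recurrent (positive recurrent since $2s<1$) birth-type chain on $\N$.

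The main technical device is the Appendix result on convergence of stationary distributions for uniformly ergodic families of Markov chains on increasing state-spaces: I would verify its hypotheses (a uniform Foster--Lyapunov / geometric drift condition holding for $C^N_t(s)$ with $N$ large, which is where Theorem \ref{thm00} or its proof does the heavy lifting, plus the pointwise convergence of transition kernels just described) to conclude that $\lim_{t\to\infty}$ of the law of $C^N_t(s)$ converges, and that the $t\to\infty$ then $N\to\infty$ double limit of $\Exp_N[C^N_t(s)]$ equals the mean of the stationary distribution of the limiting chain on $\N$. I then need to compute that stationary mean. The limiting chain should have a geometric-type stationary distribution: writing out the detailed-balance (or flow-balance) equations for the limit chain with the rates above, I expect $\pi(n) \propto r^n$ on $n \geq 1$ for an appropriate ratio $r$ built from $s$, adjusted by the slightly different behaviour at $0$. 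Summing $\sum_n n\,\pi(n)$ then yields a rational function of $s$, which I would match to $2s + \frac{s^2}{1-2s}$.

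For $s \geq 1/2$, I would argue divergence directly: either invoke Theorem \ref{thm00} (which already gives $=\infty$ for $s > s^* = 1/2$ when $\alpha = 0$), or, to also cover the endpoint $s = 1/2$, note that at $s = 1/2$ the limiting chain on $\N$ has zero drift (it is null recurrent or transient), so its stationary distribution is not a probability measure and $\Exp[C^N_t(1/2)] \to \infty$; a monotonicity argument (the map $s \mapsto C^N_t(s)$ is coordinatewise nondecreasing, hence so is the expectation of the equilibrium) lets me upgrade the strict inequality to include $s=1/2$, since the double limit is monotone in $s$ and already infinite for every $s > 1/2$.

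I expect the main obstacle to be the careful bookkeeping for the limiting transition probabilities at and near the boundary states (especially $n=0$ and $n=1$, where `the minimum is a small point' can fail), and then solving the balance equations with the correct boundary terms so that the stationary mean comes out to exactly $2s + \frac{s^2}{1-2s}$ rather than a nearby but wrong rational function; a secondary obstacle is checking the uniform-in-$N$ ergodicity hypothesis of the Appendix theorem with enough uniformity to justify exchanging $\lim_{t\to\infty}$ with $\lim_{N\to\infty}$, though this should follow from a Lyapunov function of the form $\Exp[\lambda^{C^N_t(s)}]$ for suitable $\lambda > 1$, using that $F_N(n)\to 0$ controls the atypical upward jumps.
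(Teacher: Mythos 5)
Your overall plan is the same as the paper's: compute the transitions of $C^N_t(s)$, pass to the `$N=\infty$' limit chain on $\Z^+$, invoke the Appendix convergence theorem (Theorem~\ref{mclim}) to justify swapping $\lim_{t\to\infty}$ with $\lim_{N\to\infty}$, compute the stationary mean of the limit chain for $s<1/2$, and deal with $s\geq 1/2$ separately. Two points need fixing, one substantive.

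First, you propose solving the ``detailed-balance (or flow-balance) equations'' for the limit chain. The limit chain on $\Z^+$ with $\alpha=0$ has $p^s(0,2)=s^2>0$ but $p^s(2,0)=0$, so it is \emph{not} reversible: detailed balance forces $\pi^s(0)=0$, which contradicts $\pi^s(0)=1-2s>0$. You would need either global balance, or the paper's trick (Lemma~\ref{limitlem}): because $p^s(0,2)=p^s(1,2)=s^2$ and $p^s(0,\{0,1\})=p^s(1,\{0,1\})=1-s^2$, one can merge the states $\{0,1\}$ into a single state and obtain a genuine nearest-neighbour birth-and-death chain, which \emph{is} reversible; then disentangle $\pi^s(0)$ from $\pi^s(1)$ afterwards using one stationarity equation. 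This is what makes the exact formula $2s+\frac{s^2}{1-2s}$ drop out cleanly.

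Second, your argument for $s=1/2$ has a directional error. You suggest that since the double limit is monotone nondecreasing in $s$ and is already infinite for $s>1/2$, the case $s=1/2$ follows. But monotonicity gives $\lim\lim\Exp_N[C^N_t(1/2)]\leq\lim\lim\Exp_N[C^N_t(s)]$ for $s>1/2$, which says nothing. The correct direction (and what the paper does) is $\Exp_N[C^N_t(1/2)]\geq\Exp_N[C^N_t(r)]$ for $r<1/2$, whose double limit is $2r+\frac{r^2}{1-2r}$, and then send $r\uparrow 1/2$. Your alternative route via null recurrence of the limit chain at $s=1/2$ is in the right spirit, but note that the Appendix theorem's uniform drift hypothesis~(\ref{foster}) fails at the critical point, so it does not apply directly there and would require a separate argument (which the simple monotone lower bound avoids).
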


We also have the following explicit description of the limit distribution.

\begin{theo}
\label{thm1}
Suppose that  (A2$'$) holds and that $\alpha = 0$.
If $s <   1/2$, then for any $n \in \Z^+$,
\[ \lim_{N \to \infty} \lim_{t \to \infty} \Pr_N [ C^N_t(s) = n ] =\pi^{s} (n),\]
where
\begin{align}
  \label{pi}
  \pi^s (0) & = 1 - 2s ; \nonumber\\
  \pi^s (1) & = 2s - \left( \frac{s}{1-s} \right)^2 ; \nonumber\\
  \pi^s (n) & = \left( 1 - \left( \frac{s}{1-s} \right)^2 \right) \left( \frac{s}{1-s} \right)^{2 (n-1)}, ~~~(n \geq 2) .\end{align}
On the other hand, if $s \geq  1/2$, then for any $n \in \Z^+$, $\lim_{N \to \infty} \lim_{t \to \infty} \Pr_N [ C^N_t(s) \leq n ] = 0$.
\end{theo}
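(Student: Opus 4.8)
The plan is to imitate the strategy of Section~\ref{warmup}: analyse the one-dimensional Markov chain $Y^N_t:=C^N_t(s)$ directly and pass to an $N\to\infty$ limit. First I would record the transition law of $Y^N_t$. At each step we discard the minimum together with the rank-$R$ element, where $R$ has law $f_N$ on $\{2,\ldots,N\}$, and replace them by two independent $U[0,1]$ values; thus $Y^N_t$ is a Markov chain on $\{0,1,\ldots,N\}$ with: from state $0$, $Y^N_{t+1}=B$; and from state $n\geq1$, $Y^N_{t+1}=n-1-J+B$, where $B\sim\mathrm{Bin}(2,s)$ and $J\sim\mathrm{Bern}(F_N(n))$ are independent. (When $n\geq1$ the minimum is a value $\leq s$, while the rank-$R$ value is $\leq s$ exactly when $R\leq n$, which has probability $F_N(n)$.) The key input is that (A2$'$) together with $\alpha=0$ forces $F_N(n)\to0$ as $N\to\infty$ for every fixed $n$, so for each fixed pair $(n,m)$ the transition probabilities of $Y^N_t$ converge to those of the limit chain $Y$ on $\Z^+$ given by $Y_{t+1}=B$ from state $0$ and $Y_{t+1}=n-1+B$ from state $n\geq1$; write $P$ for the transition matrix of $Y$.

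Next I would compute the stationary law of $Y$. The chain $Y$ is irreducible, aperiodic (state $0$ has a self-loop of probability $(1-s)^2>0$), and skip-free to the left, so $\pi=\pi P$ is solved by flow-balance across the cuts $\{0,\ldots,k\}\mid\{k+1,k+2,\ldots\}$. Writing $p:=(1-s)^2$ and $q:=s^2$, the cut at $k=0$ gives $\pi(1)=p^{-1}(1-p)\pi(0)$, and the cuts at $k\geq1$ give $\pi(n)=p^{-1}(q/p)^{n-1}\pi(0)$ for $n\geq2$. This is summable iff $q<p$, i.e.\ iff $s<1/2$, and in that case, using $p-q=1-2s$, normalisation yields $\pi(0)=1-2s$; substituting back and simplifying reproduces exactly the formulas (\ref{pi}) for $\pi^s$ (as a check, $\sum_n n\,\pi^s(n)=2s+\tfrac{s^2}{1-2s}$, in agreement with Theorem~\ref{thm0b}). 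When $s\geq1/2$ the invariant measure $n\mapsto(q/p)^n$ of $Y$ is not summable, so $Y$ is not positive recurrent.

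For $s<1/2$ I would then deduce the result in two steps. First, for fixed $N$ the finite chain $Y^N_t$ is aperiodic and has state $0$ reachable from every state (each downward one-step probability is positive below $N$, and $Y^N$ is skip-free downwards), so $\lim_{t\to\infty}\Pr_N[C^N_t(s)=n]=\pi^{s,N}(n)$ for the unique stationary distribution $\pi^{s,N}$. Second, $\pi^{s,N}\to\pi^s$ pointwise, which I would obtain from the Appendix result on convergence of stationary distributions of uniformly ergodic chains on increasing state-spaces (Section~\ref{sec:limits}). To apply it one must verify a Foster--Lyapunov drift condition uniform in $N$: taking $V(n)=\beta^n$ one computes, for $n\geq1$,
\[ \Exp_N\bigl[\beta^{Y^N_{t+1}}\mid Y^N_t=n\bigr]=\beta^{n-1}\bigl(1-F_N(n)(1-\beta^{-1})\bigr)\bigl((1-s)+s\beta\bigr)^2\leq\beta^{n-1}\bigl((1-s)+s\beta\bigr)^2 \]
when $\beta\geq1$, while at $n=0$ the left-hand side equals $\bigl((1-s)+s\beta\bigr)^2=:b$. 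The last bound is $\lambda\beta^n$ with $\lambda:=\beta^{-1}\bigl((1-s)+s\beta\bigr)^2$, and since $\beta\mapsto\lambda(\beta)$ takes the value $1$ and has derivative $2s-1<0$ at $\beta=1$, one may fix $\beta>1$ (depending only on $s$) with $\lambda<1$; this gives $\Exp_N[V(Y^N_{t+1})\mid Y^N_t=n]\leq\lambda V(n)+b\,\1\{n=0\}$ with $\lambda<1$ and $b$ independent of $N$ --- the required uniform ergodicity. As $Y$ obeys the same drift it is positive recurrent with unique stationary law $\pi^s$, so the Appendix result yields $\pi^{s,N}\to\pi^s$. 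I expect this to be the delicate step: the uniform geometric drift is the right quantitative ingredient, but it must be matched precisely to the hypotheses of the Appendix result, with care for the fact that $Y$ lives on the unbounded space $\Z^+$, and the non-geometric boundary values $\pi^s(0)$ and $\pi^s(1)$ require separate bookkeeping in the balance-equation computation.

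For $s\geq1/2$ I would argue softly. Fix $n$, and suppose for contradiction that $\limsup_{N}\sum_{m\leq n}\pi^{s,N}(m)>0$. Along a suitable subsequence, $\pi^{s,N}(m)\to\mu(m)$ for every $m$, where $\mu$ is a nonzero sub-probability measure on $\Z^+$. Since from each state only finitely many transitions have positive probability --- indeed the balance equation for $\pi^{s,N}$ at $m$ only involves $\pi^{s,N}(j)$ for $j\in\{m-1,m,m+1,m+2\}$ --- letting $N\to\infty$ and using $F_N(j)\to0$ shows $\mu=\mu P$, so $\mu$ is a nonzero invariant sub-probability measure of $Y$. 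But normalising $\mu$ would give an invariant probability measure and hence positive recurrence of $Y$, which fails for $s\geq1/2$ (here one uses the standard, not completely trivial, fact that a bounded-increment walk on $\Z^+$ with non-negative drift is not positive recurrent). This contradiction forces $\pi^{s,N}(m)\to0$ for every $m$, so $\lim_{N\to\infty}\lim_{t\to\infty}\Pr_N[C^N_t(s)\leq n]=\lim_{N\to\infty}\sum_{m\leq n}\pi^{s,N}(m)=0$.
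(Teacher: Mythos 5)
Your proposal is correct and, for the $s<1/2$ part, follows essentially the paper's route: identify the transition kernel of $C^N_t(s)$, pass to the `$N=\infty$' chain, solve for its stationary law, and invoke the Appendix result (Theorem~\ref{mclim}, via Lemma~\ref{lem:limits}(b)) for the pointwise convergence $\pi^{s,N}\to\pi^s$. Your computation of $\pi^s$ by flow-balance across cuts exploits the left-skip-free structure directly, whereas the paper's Lemma~\ref{limitlem} first merges the states $\{0,1\}$ (using that $p^s(0,2)=p^s(1,2)$) to obtain a genuine birth-and-death chain and then applies detailed balance before disentangling the composite state; both give identical formulas. Your verification of the Appendix drift hypothesis is correct but overbuilt: the statement of~(\ref{foster}) only needs the linear drift, and from~(\ref{drift1}) the one-step drift at $n\geq1$ is $2s-1-F_N(n)\leq 2s-1<0$ uniformly in $N$ and $n$, so no geometric Lyapunov function is required; the geometric drift you establish would of course also serve, and independently recovers the exponential decay of $\nu_N$ in Theorem~\ref{mclim}(b).

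For $s\geq1/2$ you take a genuinely different route. The paper uses the monotone coupling $C^N_t(s)\geq C^N_t(r)$ for $r<s$ and then lets $r\uparrow1/2$ in the explicit formula~(\ref{pi}) to see that the partial sums $\sum_{m\leq n}\pi^r(m)$ vanish. Your argument instead extracts a subsequential pointwise limit $\mu$ of $\pi^{s,N}$, passes to the limit in the finitely-supported balance equations to see $\mu=\mu P$, and rules out $\mu\neq 0$ because the limit chain fails to be positive recurrent for $s\geq1/2$; non-positive-recurrence follows most directly here from your own computation, since the invariant measure $n\mapsto p^{-1}(q/p)^{n-1}$ has non-summable tails when $q\geq p$, which by uniqueness (up to scaling) of invariant measures for recurrent irreducible chains excludes a stationary probability. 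Both arguments are sound; the paper's is shorter because it reuses the $s<1/2$ computation, while yours is more self-contained and does not require the explicit formula for $\pi^r$. One cosmetic slip: the set of states feeding the balance equation at a generic $m$ should also include $j=0$ when $m\leq 2$ (the origin can jump to $\{0,1,2\}$), though this does not affect the finiteness argument you rely on.
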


\begin{rmk}
(i) The corresponding stationary probabilities put forward in \cite{deboer}
do not sum to 1 (see equations (10)--(12) in \cite{deboer}).
Our argument is similar (based on the use of the `$N =\infty$' Markov chain)
but we try to give a fuller justification.
(ii)
Let 
\begin{equation}
\label{taudef}
\tau_N(s) := \min \{ t \in \N : C^N_t(s) = 0\}.
\end{equation} 
By standard Markov chain theory,
$\pi_N^s (n) = ( \Exp_N [ \tau_N (s) \mid C^N_0(s) =0] )^{-1}$. So an immediate consequence of Theorem \ref{thm1}
is that (cf equation (16) of \cite{deboer})
\[ \lim_{N \to \infty} \Exp_N [ \tau_N (s) \mid C^N_0(s) =0] = \begin{cases} \frac{1}{1-2s} & \textrm{ if } s <   1/2  \\
\infty & \textrm{ if }   s \geq  1/2  \end{cases}. \]
\end{rmk}

 We  also prove explicit limiting (marginal) distributions  for the lower order statistics
 themselves. 
 We use the notation
 \begin{equation}
 \label{hdef}
 h_n (s) :=
 \begin{cases} 2s & \textrm{ if } n=1 \\
 \left( \frac{s}{1-s} \right)^{2(n-1)} & \textrm{ if } n \geq 2 \end{cases}. \end{equation}

 \begin{theo}
 \label{thm2}
Suppose that   (A2$'$) holds and that $\alpha = 0$.  
Then for   $n \in \N$,  
 \begin{equation}
 \label{theta}
  \lim_{N \to \infty} \lim_{t \to \infty} \Pr_N [ X_t^{(n)} \leq s ] = 
  \begin{cases} 0 & \textrm{ if } s \leq 0 \\
   h_n (s) & \textrm{ if } 0 \leq s \leq 1/2 \\
    1 & \textrm{ if } s \geq 1/2 \end{cases}. \end{equation}
    \end{theo}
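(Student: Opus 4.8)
The plan is to deduce the marginal distributions of the lower order statistics $X_t^{(n)}$ from the (already established, by Theorem \ref{thm1}) limiting law of the counting variables $C^N_t(s)$, using the elementary duality $\{ X_t^{(n)} \leq s \} = \{ C^N_t(s) \geq n \}$ noted in Section \ref{warmup}. First I would treat the easy cases. For $s \leq 0$ one has $\Pr_N[X_t^{(n)} \leq s] = 0$ trivially (the replacement variables are $U[0,1]$ and, after one step, all coordinates lie in $[0,1]$, with the event $\{X_t^{(1)} \le 0\}$ having probability zero for $t \ge 1$). For $s \geq 1/2$, Theorem \ref{thm1} gives $\lim_N \lim_t \Pr_N[C^N_t(s) \leq n] = 0$ for every fixed $n$, hence $\Pr_N[C^N_t(s) \geq n] \to 1$, i.e.\ $\Pr_N[X^{(n)}_t \leq s] \to 1$, as required.

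The main case is $0 < s < 1/2$. Here, by the duality,
\[
\lim_{N \to \infty} \lim_{t \to \infty} \Pr_N [ X_t^{(n)} \leq s ]
= \lim_{N \to \infty} \lim_{t \to \infty} \Pr_N [ C^N_t(s) \geq n ]
= \sum_{m=n}^{\infty} \pi^s(m),
\]
where the exchange of the limit with the (infinite) sum over $m$ needs a word of justification — see below — and $\pi^s$ is the mass function from (\ref{pi}). So the computation reduces to evaluating the tail sums $\sum_{m \geq n} \pi^s(m)$ and checking they equal $h_n(s)$ from (\ref{hdef}). For $n \geq 2$, since $\pi^s(m) = (1 - r^2) r^{2(m-1)}$ with $r := s/(1-s) \in (0,1)$ for $m \geq 2$, the tail is a geometric series summing to $r^{2(n-1)} = h_n(s)$; one should double-check the boundary term at $m = 1$ is not needed when $n \geq 2$. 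For $n = 1$ the tail is $1 - \pi^s(0) = 1 - (1-2s) = 2s = h_1(s)$, using that $\pi^s$ is a probability distribution (which is part of the content of Theorem \ref{thm1}, in contrast to the defective masses reported in \cite{deboer}). This matches (\ref{theta}) in all cases, completing the proof.

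The one technical point to address carefully — the place I expect to be the mild obstacle — is interchanging $\lim_{N\to\infty}\lim_{t\to\infty}$ with the infinite sum $\sum_{m \ge n}$ defining $\Pr[C \ge n]$; equivalently, ruling out escape of mass to $+\infty$ in the limiting laws. For the inner ($t \to \infty$) limit this is automatic: for fixed $N$ the chain $C^N_t(s)$ lives on the finite set $\{0,1,\dots,N\}$ and converges in distribution to a genuine stationary law, so $\Pr_N[C^N_t(s) \ge n] \to \Pr_N[C^N_\infty(s) \ge n] = 1 - \sum_{m<n}\pi^s_N(m)$ with no tail issue. For the outer ($N \to \infty$) limit one needs that $\pi^s_N(m) \to \pi^s(m)$ for each fixed $m$ (this is exactly Theorem \ref{thm1}), together with tightness, i.e.\ that $\limsup_{N}\sum_{m \ge M}\pi^s_N(m) \to 0$ as $M \to \infty$; this tightness should already be available from the Foster--Lyapunov bounds used to prove Theorems \ref{thm00} and \ref{thm0b} (uniform-in-$N$ control of $\Exp_N[C^N_\infty(s)]$ for $s < 1/2$), and given it, $\Pr_N[C^N_\infty(s) \ge n] \to \sum_{m \ge n}\pi^s(m)$ follows by the standard argument. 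So the proof is essentially a bookkeeping exercise on top of Theorem \ref{thm1}, with the only care needed being this uniform-integrability/tightness step, for which the machinery is already in place.
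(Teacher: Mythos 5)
Your proof is correct and follows essentially the same route as the paper's: the duality $\{X_t^{(n)} \leq s\} = \{C_t^N(s) \geq n\}$ plus Theorem \ref{thm1} plus algebra on $\pi^s$. The one place you flag as a mild obstacle --- interchanging $\lim_{N\to\infty}$ with the infinite sum $\sum_{m\ge n}\pi^s_N(m)$, requiring tightness of $\{\pi^s_N\}_N$ --- is real, and the uniform exponential tails from Lemma \ref{lem:limits}(a) do resolve it exactly as you suggest, but the paper sidesteps it entirely by writing
\[
\Pr_N[C_t^N(s)\ge n] = 1 - \sum_{m=0}^{n-1}\Pr_N[C_t^N(s)=m],
\]
a \emph{finite} sum, so the pointwise convergence $\pi^s_N(m)\to\pi^s(m)$ from Theorem \ref{thm1} suffices term by term with no tail control needed; the identity $1-\sum_{m<n}\pi^s(m)=h_n(s)$ is then the same geometric-series algebra you carried out.
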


The $n=1$ case of (\ref{theta}) says that
the large $N$, long-time distribution of the smallest component approaches a $U[0,1/2]$ distribution. The distributions
arising for $n \geq 2$ are not so standard, but, 
 as $n \to \infty$, they   approach
  a unit point mass
at $1/2$.
 
Also note that Theorem \ref{thm2} yields
convergence of moments of the $X_t^{(n)}$.
  For example,   for any $k \in \N$,
 $\lim_{N \to \infty} \lim_{t \to \infty} \Exp_N [ (X_t^{(1)} )^k ] =2^{-k}/(k+1)$, and for
 $n \geq 2$ and any $k \in\N$,
 \begin{equation}
 \label{hyper}
  \lim_{N \to \infty} \lim_{t \to \infty} \Exp_N [ (X_t^{(n)} )^k ]
 =
2^{-k} - \frac{k 2^{-(2n+k-2)}}{2n+k-2}
 \; {_2 F_1} (2n-2,2n+k-2;2n+k-1; 1/2)  . \end{equation}
 To see this, note that since $X_t^{(n)}$ is uniformly bounded, its moments converge to those of the distribution
 $h_n$ by bounded
convergence, so we have
\begin{align*} \lim_{N \to \infty} \lim_{t \to \infty} \Exp_N [ (X_t^{(n)})^k ] & = k \int_0^{1/2} s^{k-1} \left(
\lim_{N \to \infty} \lim_{t \to \infty} \Pr_N [ X_t^{(n)} > s ] \right) \ud s  \\
& = k \int_0^{1/2} s^{k-1} (1-h_n(s)) \ud s.\end{align*}
When $n=1$ this is $k \int_0^{1/2} (1-2s)s^{k-1} \ud s$ which yields the claimed result.
When $n \geq 2$, using the substitution $u = 2s$, the limit becomes
\[ 2^{-k} - k 2^{-2(n-1)-k} \int_0^1 u^{2(n-1) + k-1} (1 - (u/2))^{-2(n-1)} \ud u ,\]
which gives (\ref{hyper}) via the integral representation of the hypergeometric function.

 \section{Further remarks and open problems}
 \label{open}

 \subsubsection*{A multidimensional model}

 Allowing more general distributions $W$, as described in Remark \ref{rho},
  enables some multi-dimensional
 models to fit within the scope of our results.
 We describe one example.
 Let $Z$ be a 
 uniform random vector on $[0,1]^2$, and let $\| \, \cdot \, \|$ denote the Euclidean norm.
 Starting with $N$ points in $[0,1]^2$, iterate the following Markovian model:
 at each step in discrete time, replace the minimal-ranked
 point, where the ranking is in order of increasing Euclidean distance from the origin,
and another point (chosen uniformly at random) with independent copies of $Z$.
This model corresponds to the model described in Section \ref{main}
but with the $U_i$ replaced by copies
of $W = \| Z \|$, and with $\alpha = 0$.
Elementary calculations show that $\rho (x) := \Pr [ W \leq x ] = \frac{\pi x^2}{4}$
for $x \in [0,1]$ ($\rho(x)$ is more complicated for $x \geq 1$),
so that the phase transition (see Remark \ref{rho2})
occurs at $s^* = \rho^{-1} (1/2) = \sqrt{2/\pi}  \approx 0.80$. See Figure \ref{fig1}
 for a simulation.

 \begin{figure}[!h]
  \center
 \includegraphics[angle=0,width=0.8\textwidth]{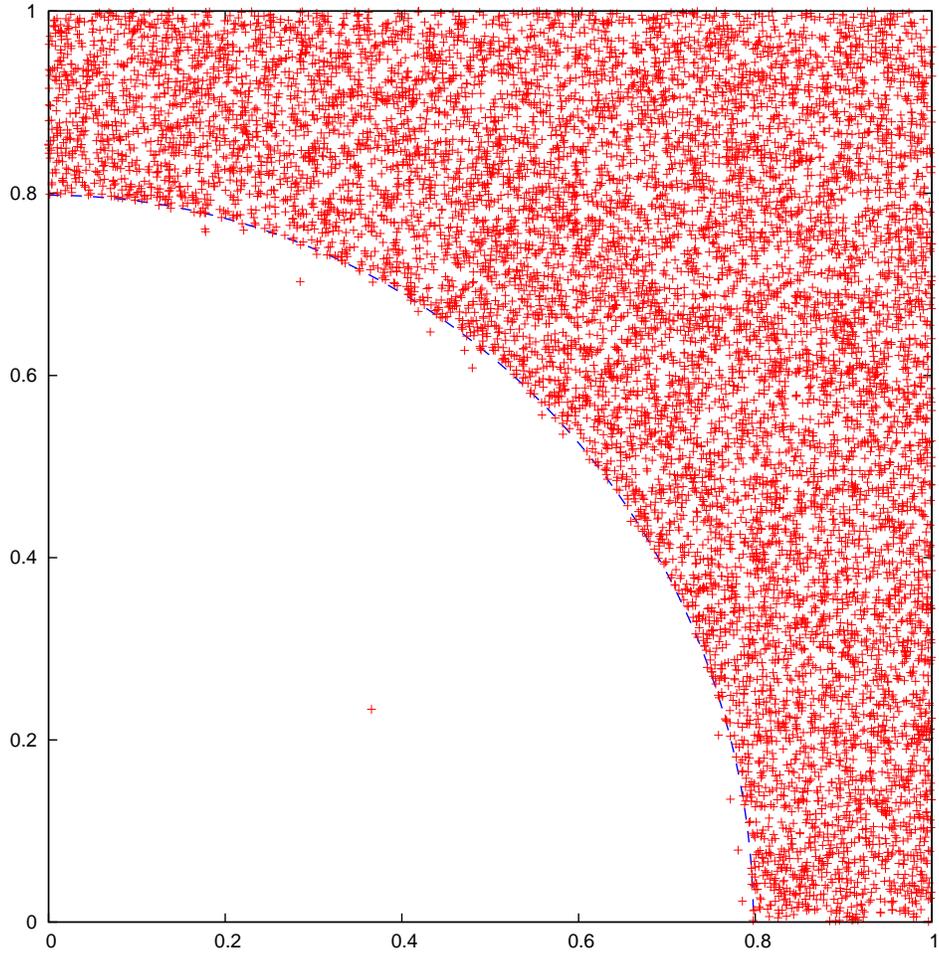}
 \caption{Simulation of the model in which, at each step, the closest point to the origin and one uniformly
 random other point are replaced by independent uniform random points on $[0,1]^2$,
  with $N=10^4$ points and $t=10^6$ steps. The initial distribution was
  $N$ independent uniform points on $[0,1]^2$. Also shown in the figure is part of the
 circle centred at the origin with radius $\sqrt{2/\pi}$.}
 \label{fig1}
 \end{figure}
 
 \subsubsection*{A partial-order-driven process}
 
 Here is a variation on the multidimensional model
 of the previous example governed by a {\em partial order}
 rather than a total order. Again consider a system of $N$ points in $[0,1]^2$.
Consider the co-ordinatewise partial order `$\po$' under which $(x_1,y_1) \po
 (x_2, y_2)$ if and only if $x_1 \leq x_2$ and $y_1 \leq y_2$;
 a point $x$ of a finite set $\mathcal{X} \subset [0,1]^2$ is
 {\em minimal} if and only if there is no $y \in \mathcal{X} \setminus \{ x \}$
 for which $y \po x$. Now define a discrete-time Markov process as follows:
 at each step, replace a minimal element of the $N$ points
 (chosen uniformly at random from amongst all possibilities)
 and a non-minimal element (again, chosen uniformly at random); all new points
 are independent and uniform on $[0,1]^2$. This model seems more difficult to
 study than the previous one, although simulations suggest qualitatively similar
 asymptotic behaviour: see Figure \ref{fig2}.

 \begin{figure}[!h]
  \center
 \includegraphics[angle=0,width=0.8\textwidth]{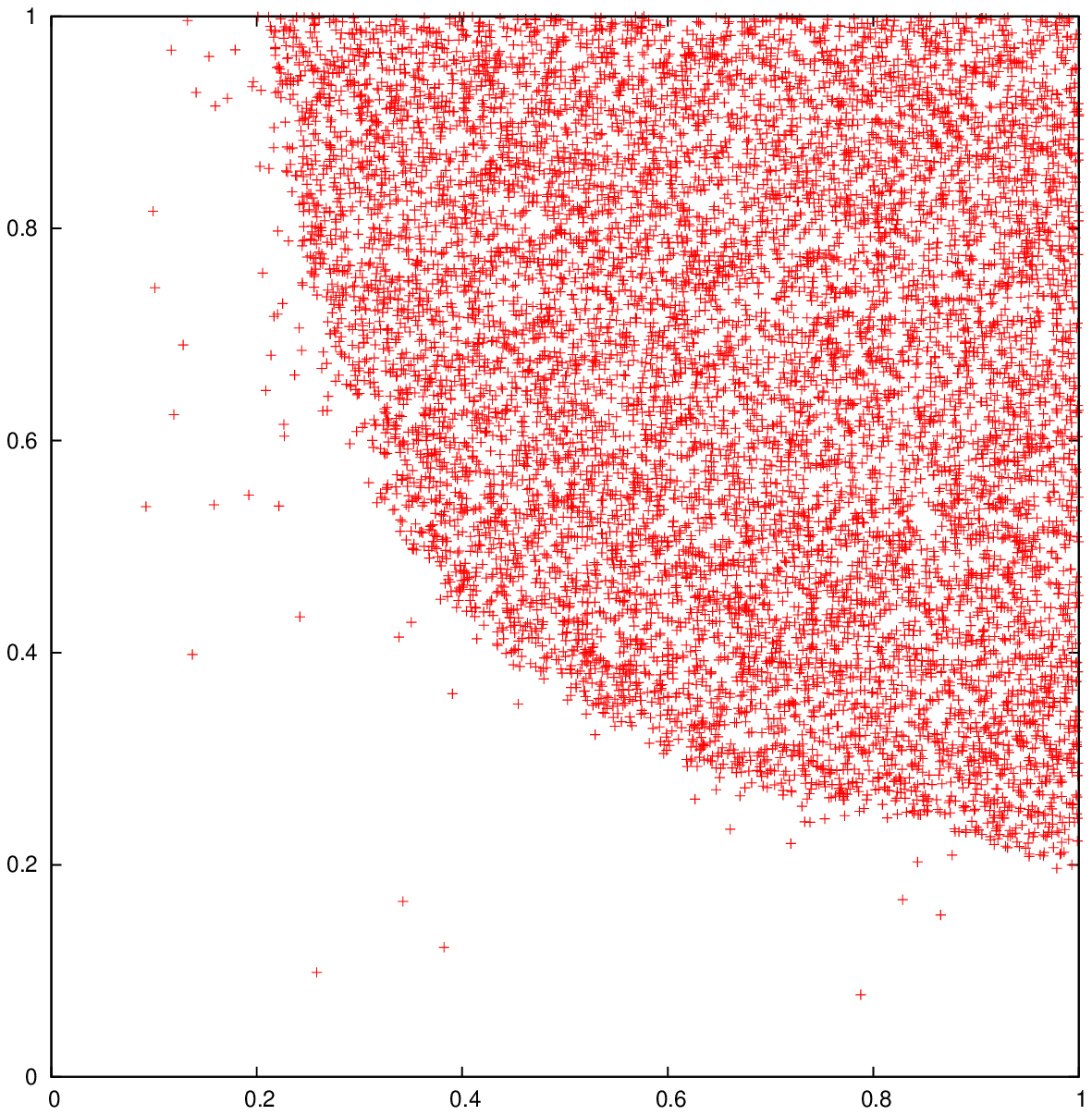}
 \caption{Simulation of the model in which, at each step, one $\po$-minimal element and one non-minimal element
 (each uniformly chosen) are replaced by independent uniform random points on $[0,1]^2$,
  with $N=10^4$ points and $t=10^6$ steps. The initial distribution was
  $N$ independent uniform points on $[0,1]^2$. Can the threshold curve be characterized?}
 \label{fig2}
 \end{figure}

 \subsubsection*{A repeated beauty contest}
 
 We describe a process of a different
 flavour to those previously considered, in which the update
 rule depends not only on the ranks of the points; this is a variation on
 a {\em Keynesian beauty contest}.
 
  Fix a parameter $p >0$. Start with a uniform array of 
$N$ elements on $[0,1]$. At each step, 
compute the mean $\mu$ of the $N$ elements,  
 and replace by a $U[0,1]$
random variable the element that is farthest (amongst  all the $N$ points)
from $p \mu$.
Thus at each step, either the minimum or maximum is replaced, depending
on the current configuration.  

 This is related to the ``$p$-beauty contest'' \cite[p.\ 72]{moulin} in which $N$ players  
 choose a number between $0$ and $100$, the winner being
the player whose choice is closest to $p$ times the average of all the $N$ choices. 
The stochastic process described above is a repeated, randomized version of this 
game (without any learning, and with random player behaviour) in which the worst performer is replaced by a new player.

According to simulations and heuristic considerations,
the equilibrium distribution of a typical point
approaches, for large $N$,
a point mass at $0$ ($1$) in the case  $p<1$ ($p>1$).
The case $p=1$ is more subtle, and is reminiscent of a  P\'olya urn.
Stochastic approximation ideas (see e.g.\ \cite{pemantle})
may be relevant in studying this model.

\section{General thresholds: Proofs for Section \ref{model}}
\label{sec:genproofs}

\subsection{Overview}

This section contains the proofs of our general results from Section \ref{model},
and is arranged as follows. In Section \ref{sec:genfiniteN} we give a basic result
on the Markov chains $C_t^N(s)$. To study the $N \to \infty$ asymptotics of these
Markov chains, at least when $s < s^*$,
 we introduce an `$N = \infty$' Markov chain $C_t(s)$. In Section
\ref{sec:genlimitchain} we show that we can define $C_t(s)$
in a consistent way, and we prove some of its basic properties. In Section \ref{sec:asymptotics}
we relate the asymptotic properties of the finite-$N$ chains $C_t^N(s)$ with $s < s^*$
 to the chain $C_t(s)$,
making use of our technical results from Section \ref{sec:limits}.
Then in Section \ref{sec:proofs} we complete the proofs
of Theorems \ref{thm00} and \ref{thm3}.

\subsection{The Markov chain $C^N_t(s)$}
\label{sec:genfiniteN}
 
We have the following basic result. 

\begin{lm}
\label{lem0}
Suppose that (A1)  holds. Fix $N \in \N$. Suppose that   $s \in (0,1)$.  
Then $C^N_t(s)$
is an irreducible, aperiodic Markov chain on $\{0,1,\ldots, N\}$ with uniformly bounded jumps:
 $\Pr_N [ | C^N_{t+1} (s) - C^N_t (s) | > K ] = 0$.
There
exists a unique stationary distribution $\pi_N^s$, with $\pi_N^s (n) >0$ for all
$n \in \{0,1,\ldots,N\}$ and $\sum_{n =0}^N \pi_N^s(n) =1$, such that
\begin{equation}
\label{stat} \lim_{t \to \infty} \Pr_N [ C^N_t(s) = n ] =\pi^s_N(n) ,\end{equation}
for any initial distribution $C^N_0(s)$.
Moreover,
 \begin{equation}
 \label{meanconv}
\lim_{t \to \infty} \Exp_N [C^N_t(s)] = \sum_{n=1}^N n \pi_N^s (n).
\end{equation}
Finally, with $G_N$ as defined at (\ref{GNdef}),
the one-step mean drift of $C^N_t(s)$ is given by
\begin{equation}
\label{gendrift}
\Exp_N [ C^N_{t+1} (s) - C^N_t (s) \mid C^N_t (s) = n] = K ( s -  G_N (n) ).
\end{equation}
\end{lm}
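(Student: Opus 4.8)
The plan is to establish each of the four assertions of Lemma \ref{lem0} in turn, with the drift identity \eqref{gendrift} being the computational heart of the matter and the rest following from it together with standard finite-state Markov chain theory.

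First I would verify that $C^N_t(s)$ is a Markov chain on $\{0,1,\ldots,N\}$ with jumps bounded by $K$: this is immediate from the transition rule, since at each step exactly $K$ coordinates are discarded and replaced, so $C^N_{t+1}(s)$ can differ from $C^N_t(s)$ by at most the number of replaced values that cross the level $s$, which is at most $K$. To see the Markov property, observe that whether each replaced value lands in $[0,s]$ or $(s,1]$ depends only on the fresh $U[0,1]$ variables and on which ranks are selected (via $R^N(t+1)$), and the number of discarded values lying in $[0,s]$ is determined by the ranks selected together with the current value of $C^N_t(s)$ alone — precisely because $X_t^{(i)}\le s$ iff $i\le C^N_t(s)$. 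Formally, if $n=C^N_t(s)$ and the selected ranks are $r_1<\cdots<r_K$, then the number of discarded values that were $\le s$ is $\#\{j: r_j\le n\}$, and the number of new values that are $\le s$ is $\mathrm{Binomial}(K,s)$; hence the transition probability from $n$ depends only on $n$.

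Next comes the drift formula \eqref{gendrift}. Writing $J$ for the number of selected ranks that are $\le n$ and $B$ for the number of the $K$ fresh uniforms that fall in $[0,s]$, we have $C^N_{t+1}(s)-C^N_t(s) = B - J$ given $C^N_t(s)=n$. Here $B$ is independent of everything with $\Exp_N[B]=Ks$, while $\Exp_N[J] = \Exp_N\bigl[\sum_{j=1}^K \1\{R^N_j \le n\}\bigr] = K\,\Pr_N[R^N_1\le n] = K\,G_N(n)$ by exchangeability of $R^N$ and the definition \eqref{GNdef} of $G_N$. Subtracting gives $\Exp_N[C^N_{t+1}(s)-C^N_t(s)\mid C^N_t(s)=n] = Ks - K G_N(n) = K(s-G_N(n))$, as claimed. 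This step is routine but is the one place where assumption structure (exchangeability, the definition of $g_N$ and $G_N$) is actually used, so care is needed to invoke it cleanly.

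For irreducibility and aperiodicity I would use (A1). When $K=1$, $g_N(i)>0$ for all $i$ means every rank can be selected, and combined with the positive probability that a fresh uniform lands on either side of $s$ (using $s\in(0,1)$), one can move up or down by one from any interior state and also hold, which gives an irreducible aperiodic chain on the whole of $\{0,\ldots,N\}$; similarly when $K\ge2$, $g_N(1)>0$ guarantees the minimal rank is selected with positive probability, and by conditioning on that event and on the realizations of the fresh uniforms one produces, in one or a bounded number of steps, transitions that connect all states and create a self-loop, yielding irreducibility and aperiodicity. Given a finite irreducible aperiodic chain, the existence of a unique stationary distribution $\pi_N^s$ with $\pi_N^s(n)>0$ for all $n$, the convergence \eqref{stat} for arbitrary initial distribution, and the convergence of means \eqref{meanconv} (the latter because the state space is finite, so $C^N_t(s)$ is bounded and $\Exp_N[C^N_t(s)]=\sum_n n\,\Pr_N[C^N_t(s)=n]\to\sum_n n\,\pi_N^s(n)$ by finiteness of the sum) are all standard. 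The main obstacle, such as it is, is simply being careful in the irreducibility argument that (A1) really does force the claimed connectivity in the $K\ge2$ case, where only $g_N(1)>0$ is assumed rather than full support — one should check that selecting rank $1$ together with suitable fresh uniforms suffices to reach every state, possibly using that the other $K-1$ selected ranks are arbitrary distinct ranks and that $s\in(0,1)$ lets fresh uniforms go either way.
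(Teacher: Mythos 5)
Your proposal mirrors the paper's proof: both reduce to the increment decomposition $C^N_{t+1}(s)-C^N_t(s)=B_{t+1}(s)-A^N_{t+1}(n)$ given $C^N_t(s)=n$ (with $B_{t+1}(s)\sim\mathrm{Bin}(K,s)$ counting new uniforms falling below $s$ and $A^N_{t+1}(n)$ counting selected ranks at most $n$), compute the drift via exchangeability of $R^N$, and then invoke finite-state Markov chain theory for the convergence statements. You are right to single out irreducibility in the case $K\geq2$ as the step needing care, and you should not leave it as a sketch: if you try to push it through you will find that (A1) alone does not force the claimed connectivity. Indeed, the paper's own argument infers $\Pr_N[A^N_{t+1}(n)=1]\geq\Pr_N[1\in\{R^N_1,\ldots,R^N_K\}]$ from the pointwise bound $A^N_{t+1}(n)\geq\1\{1\in\{R^N_1,\ldots,R^N_K\}\}$, but this bound only controls $\Pr_N[A^N_{t+1}(n)\geq1]$, not $\Pr_N[A^N_{t+1}(n)=1]$.

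To see that the gap is real rather than cosmetic, take $K=2$ and $\kappa_N$ concentrated on the unordered pair $\{1,2\}$, so that $g_N(1)=1/2>0$ and (A1) holds: then $A^N_{t+1}(n)=2$ almost surely for every $n\geq2$, hence from such a state the increment lies in $\{-2,-1,0\}$, and states $\geq3$ are unreachable from $\{0,1,2\}$ --- the chain is not irreducible. What an argument actually needs (and what the motivating examples (E2) and (E3) do provide) is that from every state $n\in\{1,\ldots,N-1\}$ there is positive probability that fewer than $K$ of the selected ranks are at most $n$, which is strictly stronger than $g_N(1)>0$. Without some hypothesis of this kind, the irreducibility step in both your sketch and the paper's write-up does not go through, so the ``possible obstacle'' you flagged is a genuine one and deserves an explicit additional assumption or a sharper argument.
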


Note that the degenerate cases $s \in \{0,1\}$ are excluded
from Lemma \ref{lem0}:
$C^N_t(1) =N$ a.s.\ for all $t$,
while 
 $C^N_t (0) \to 0$ a.s.\ as $t \to \infty$ for any initial distribution $C_0^N(0)$.

\begin{proof}[Proof of Lemma \ref{lem0}.]
Note that
\[ \{C^N_t(s) = n\} = \{ X_t^{(1)} \leq s, \ldots, X_t^{(n)} \leq s, X_t^{(n+1)} > s, \ldots, X_t^{(N)} > s \} ;\]
the distribution of $C^N_{t+1} (s)$ depends only on   $(X_t^{(1)}, \ldots, X_t^{(N)})$
through events of the form on the right-hand side of the last display.
Specifically, given $C_t^N(s) =n$, we have that the increment  $C^N_{t+1}(s)-C^N_t(s)$
is the number of the $K$ new $U[0,1]$-distributed
points that fall in $[0,s]$ minus the
number of the $K$ points selected for removal whose rank was at most $n$. That is,
with the notation 
\begin{align*} B_{t+1} (s) & := \# \{ i \in \{Kt+1 , \ldots, Kt+K \} : U_i \in [0,s] \}, \\
A^N_{t+1} (n) & := \# \{ i \in \{ 1, \ldots , K \} : R^N_i(t+1) \leq n \} ,
\end{align*}
we have that, given $C_t^N(s) =n$,
\begin{equation}
\label{geninc}
 C^N_{t+1}(s) = n + B_{t+1} (s) - A^N_{t+1} (n) 
 .\end{equation} 
Thus, given $C_t^N(s)=n$,
 the increment depends only on $n$ and the variables $R^N(t+1)$, $U_{Kt+1}, \ldots, U_{Kt+K}$,
which are all independent of $C^N_t(s)$.
 This demonstrates the Markov property. 

The bounded jumps property
is clear by definition, and can also been seen from (\ref{geninc}).
 To show irreducibility and aperiodicity, 
  we show that
\begin{align*} \Pr_N [ C^N_{t+1} (s) = n \mid C^N_t(s) = n ] & > 0, ~~~ ( n \in \{0,1,\ldots, N\}),\\
\Pr_N [ C^N_{t+1} (s) = n+1 \mid C^N_t(s) = n ] & > 0, ~~~ ( n \in \{0,1,\ldots, N-1\}),\\
\Pr_N [ C^N_{t+1} (s) = n-1 \mid C^N_t(s) = n ] & > 0, ~~~ ( n \in \{1,2,\ldots, N\}).
\end{align*}
Since $B_{t+1}(s)$ and $A_{t+1}^N(n)$ are independent given $C^N_t(s)=n$, it suffices
to show that $\Pr_N [ B_{t+1} (s) = i ] >0$ for any $i \in \{0,1,\ldots, K\}$, and 
that $\Pr_N [ A^N_{t+1} (n) = i ] >0$ for: (i) $i \in \{0,1\}$ if $K=1$; or (ii) $i =1$ if $K \geq 2$.
Then the intersection of two independent events of positive probability will yield
any increment of $C_t^N(s)$ in $\{-1,0,1\}$, as required.
First consider $B_{t+1}(s)$: this has a ${\rm Bin }(K,s)$ distribution,
and so takes any value in $\{0,1,\ldots, K\}$ with positive probability, provided $s \in (0,1)$.
Now consider $A_{t+1}^N (n)$.
Then
\begin{equation}
\label{ANt}
 A^N_{t+1} (n) = \sum_{i=1}^n \1 \{ i \in \{ R^N_1(t+1), \ldots, R^N_K(t+1) \} \} .\end{equation}
It follows from (\ref{ANt}) that, for $n \geq 1$,
 $ A^N_{t+1} (n) \geq \1 \{ 1 \in \{ R^N_1(t+1), \ldots, R^N_K(t+1) \} \}$, so  
\[ \Pr_N [ A^N_{t+1} (n) = 1 ] \geq \Pr_N [  1 \in \{ R^N_1(t+1), \ldots, R^N_K(t+1) \} ] \geq \Pr_N [ R^N_1 = 1] .\]
This latter probability is $g_N (1)$, which is positive by (A1). 
This completes the proof of irreducibility and aperiodicity in the case $K \geq 2$; it remains to
show that $\Pr_N [ A^N_{t+1} (n) = 0 ] > 0$ for $n \geq 0$ when $K=1$. Using the $K=1$ case of (\ref{ANt}), we obtain
\begin{align*}  \Pr_N [ A^N_{t+1} (n) = 0 ]  = \Pr_N [   R^N_1(t+1) > n  ] = 1 - G_N (n) ,\end{align*}
by (\ref{GNdef}),
and $1- G_N(n) >0$
 since (A1) implies that in this case $g_N(i) >0$ for some $i > n$.

Thus the Markov chain is irreducible and aperiodic; it
 has a finite state-space, and so standard Markov chain theory implies the existence
 of a unique stationary distribution, for which (\ref{stat}) holds.
Moreover,  since $C^N_t(s)$ is bounded by $N$, (\ref{meanconv}) follows from (\ref{stat}).

Finally we prove the statement (\ref{gendrift}).
We take expectations in (\ref{geninc}); $B_{t+1}(s)$
has mean $Ks$, and taking expectations in (\ref{ANt}) we obtain
\begin{align*} 
\Exp [ A_{t+1}^N (n) ]
 =   \sum_{i=1}^n \Pr [  i \in \{ R^N_1(t+1), \ldots, R^N_K(t+1) \}  ]  
  = K \sum_{i=1}^n \Pr [ R^N_1 = i],\end{align*}
by exchangeability. Thus from (\ref{GNdef}) we obtain (\ref{gendrift}).
  \end{proof}
 
 A key step in our analysis is to study the stationary distributions $\pi_N^s$ of the Markov chains $C^N_t(s)$, $s \in (0,1)$,
 whose existence is proved in Lemma \ref{lem0}.
 We consider $\pi_N^s$ as $N \to \infty$.
 One tool that we will use is a Markov chain $C_t(s)$ on the whole of $\Z^+$
 that can be viewed in some sense as the $N \to \infty$ limit of the
 Markov chains $C^N_t (s)$: this Markov chain we call
 the `$N = \infty$' chain, and we describe it in Section \ref{sec:genlimitchain}; 
 in Section \ref{sec:asymptotics} we make   precise
 the sense in which the `$N=\infty$' chain is a limit
 of the finite-$N$ chains.

 \subsection{The `$N = \infty$' chain $C_t(s)$}
 \label{sec:genlimitchain}

Our asymptotic analysis makes use of an `$N=\infty$'
analogue of the Markov chain $C^N_t(s)$.
The case $N = \infty$ does not make sense directly in terms of the original model $X_t$,
 but (A2) can be used to
 define a Markov chain on the whole of $\Z^+$, which we can relate to our finite-$N$ Markov chains,
 at least when $s < s^*$. 

 We  use $C_t(s)$ to denote our new Markov chain, now defined on the whole of $\Z^+$, and
 we write $\Pr$ for the associated probability measure and $\Exp$ for the corresponding expectation.
 The idea is to define transition probabilities via 
 \[ \Pr [ C_{t+1} (s) = m \mid C_t (s) = n] = \lim_{N \to \infty}
 \Pr_N [ C^N_{t+1} (s) = m \mid C^N_t (s) = n] ;\]
 to show that this is legitimate under suitable assumptions, we need the following result.
 
 \begin{lm}
 \label{limitprobs}
 Suppose that (A2) holds. Let $s \in [0,1]$. Then for any $n, m \in \Z^+$,
 \[ p^s (n,m) := \lim_{N \to \infty} \Pr_N [ C_{t+1}^N (s) = m \mid C_t^N (s) = n] \]
 is well-defined, and $\sum_{m \in \Z^+} p^s (n,m) = 1$.
 \end{lm}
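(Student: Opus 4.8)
The plan is to write the transition probability $\Pr_N[C^N_{t+1}(s) = m \mid C^N_t(s) = n]$ explicitly using the increment decomposition $C^N_{t+1}(s) = n + B_{t+1}(s) - A^N_{t+1}(n)$ from (\ref{geninc}), and then pass to the limit $N \to \infty$ term by term, using (A2) to control the distribution of $A^N_{t+1}(n)$. First I would note that, given $C^N_t(s) = n$, the variables $B_{t+1}(s)$ and $A^N_{t+1}(n)$ are independent, $B_{t+1}(s) \sim {\rm Bin}(K, s)$ has a distribution that does not depend on $N$ at all, and $A^N_{t+1}(n)$ takes values in $\{0, 1, \ldots, \min(n, K)\}$. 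Writing $m = n + b - a$, we have
\[ \Pr_N[C^N_{t+1}(s) = m \mid C^N_t(s) = n] = \sum_{\substack{b, a \ge 0 \\ b - a = m - n}} \binom{K}{b} s^b (1-s)^{K-b} \, \Pr_N[A^N_{t+1}(n) = a] ,\]
a finite sum. So it suffices to show that $\lim_{N \to \infty} \Pr_N[A^N_{t+1}(n) = a]$ exists for each fixed $n$ and $a$.

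The key step is that the distribution of $A^N_{t+1}(n)$ depends only on the law of which of the ranks $\{1, \ldots, n\}$ lie among the $K$ selected ranks $\{R^N_1(t+1), \ldots, R^N_K(t+1)\}$. By inclusion–exclusion on (\ref{ANt}), $\Pr_N[A^N_{t+1}(n) = a]$ can be expressed as a finite (dimension bounded in terms of $n$ and $K$ only) linear combination of probabilities of the form $\Pr_N[\{j_1, \ldots, j_k\} \subseteq \{R^N_1, \ldots, R^N_K\}]$ with $j_1 < \cdots < j_k \le n$ and $k \le K$; by exchangeability each such probability equals a constant multiple of $\Pr_N[R^N_1 = j_1, \ldots, R^N_k = j_k]$, which converges as $N \to \infty$ by (A2). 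Hence $\Pr_N[A^N_{t+1}(n) = a]$ converges; call the limit $q^n(a)$. Combining with the ($N$-free) binomial weights shows $p^s(n,m)$ is well-defined and equals the same finite sum with $\Pr_N[A^N_{t+1}(n) = a]$ replaced by $q^n(a)$.

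For the normalisation $\sum_{m} p^s(n,m) = 1$: since only finitely many $m$ give nonzero $p^s(n,m)$ (namely $m \in \{n - \min(n,K), \ldots, n + K\}$), we may interchange the (finite) sum over $m$ with the limit in $N$, giving $\sum_m p^s(n,m) = \lim_{N\to\infty} \sum_m \Pr_N[C^N_{t+1}(s) = m \mid C^N_t(s) = n] = \lim_{N\to\infty} 1 = 1$. Equivalently, one observes that the limiting weights $(q^n(a))_a$ sum to $1$ (being a limit of probability vectors supported on the fixed finite set $\{0, \ldots, \min(n,K)\}$) and the binomial weights sum to $1$, so the product/convolution structure gives total mass $1$. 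The main obstacle — really the only non-routine point — is justifying that $\Pr_N[A^N_{t+1}(n) = a]$ converges: this is exactly where (A2) is needed, via the inclusion–exclusion reduction to the joint marginals $\Pr_N[R^N_1 = i_1, \ldots, R^N_k = i_k]$; everything else is bookkeeping with finitely many terms, so no tail estimates or uniform control is required here (that will come later, in Section \ref{sec:asymptotics}).
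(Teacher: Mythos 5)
Your proof is correct and follows the same basic outline as the paper's — decompose the increment via~(\ref{geninc}), use the independence of $B_{t+1}(s)$ and $A^N_{t+1}(n)$, observe that the binomial part is $N$-free, and reduce the whole lemma to showing $\Pr_N[A^N_{t+1}(n)=a]$ converges using~(A2). Where you diverge is in the bookkeeping for that last step: you go via inclusion--exclusion to write $\Pr_N[A^N_{t+1}(n)=a]$ as a finite (size bounded by $n$ and $K$ only) signed combination of the ``inclusion'' probabilities $\Pr_N[\{j_1,\ldots,j_k\}\subseteq\{R^N_1,\ldots,R^N_K\}]$, each of which is, by exchangeability and the fact that the components are distinct, $\frac{K!}{(K-k)!}\Pr_N[R^N_1=j_1,\ldots,R^N_k=j_k]$, and you apply~(A2) term by term. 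The paper instead constructs an explicit limit random vector $R$ on the enlarged space $(\N\cup\{\infty\})^K$, with the $\infty$-components soaking up the lost mass of the defective limit in~(A2), and then computes $\Pr_N[A^N_{t+1}(n)=k]=\binom{K}{k}\Pr_N[R^N_1\le n,\ldots,R^N_k\le n,R^N_{k+1}>n,\ldots,R^N_K>n]$ and passes to the limit using $R^N\tod R$. Your route is more self-contained and avoids having to verify that the formula~(\ref{Rdist}) actually defines a proper distribution on $(\N\cup\{\infty\})^K$; the paper's route has the advantage that the object $R$ and the limit increment $A_{t+1}(n)$ are reused immediately in Lemma~\ref{genlimitchain} to define the $N=\infty$ chain and compute its drift~(\ref{gendrift2}), so the extra construction pays for itself one lemma later. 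Both normalisation arguments (yours via finiteness of the support and exchange of limit and finite sum) are fine.
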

 \begin{proof} 
 We show that the increment 
distribution, conditional on $\{ C^N_{t} = n\}$,
 given by (\ref{geninc}) in the finite $N$ case converges (as $N \to \infty$), using assumption (A2),
to an appropriate limiting distribution, which will serve as the
increment distribution $p^s ( n, \, \cdot \,)$.
This convergence is clear for the term $B_{t+1}(s)$, which has no $N$-dependence.
Moreover, given $C^N_t = n$, the terms $B_{t+1}(s)$ and $A^N_{t+1}(n)$ are independent.
Thus it suffices to show that $A^N_{t+1}(n)$ converges in distribution
to a proper random variable. We show that this follows from (A2), although  care is needed 
to correctly account for  lost
mass in (A2).

To proceed, it is useful to introduce more notation. Let $R = (R_1, \ldots, R_K)$ denote the $N \to \infty$ distributional
limit of $R^N$: given (A2), this limit exists but is not necessarily a proper
distribution on $\N^K$, but we recover a proper distribution by expanding the state-space
to $( \N \cup \{ \infty \} )^K$. Thus components of $R$ may take the value $\infty$:
this cannot be directly interpreted in terms of rank distributions, but is convenient
for correctly accounting for the lost mass in (A2). Concretely, the distribution of $R$ is given,
for any $k \leq K$ and any distinct $i_1, i_2, \ldots , i_k \in \N$,
\begin{align}
\label{Rdist} & \Pr [ R_1 = i_1, \ldots, R_k = i_k,   R_{k+1} = \infty, \ldots, R_K = \infty ] \nonumber\\
& ~~~ = \lim_{N \to \infty} \Pr_N [ R_1^N =i_1, \ldots, R_k^N = i_k ]
- \sum_{i_{k+1} \in \N} \cdots \sum_{i_K \in \N} \lim_{N \to \infty} \Pr_N [ R_1^N=i_1, \ldots, R_K^N = i_K ] \nonumber\\
&~~~ = \kappa (i_1, \ldots, i_k ) - \sum_{i_{k+1} \in \N} \cdots \sum_{i_K \in \N} \kappa (i_1, \ldots , i_K ) ,\end{align}
using (A2). Note that since $R^N$ is exchangeable on $\{1,\ldots, N\}^K$, it follows that
$R$ is exchangeable on $( \N \cup \{ \infty \})^K$.

Now we can define the $N=\infty$ analogue of $A^N_{t+1} (s)$ to
be an independent copy of $\# \{ i \in \{1,\ldots,K\} : R_i \leq n \}$, i.e.,
for $R(t+1) = (R_1(t+1), \ldots, R_K(t+1))$ an independent copy of $R$, with distribution
given by (\ref{Rdist}), we take
\[ A_{t+1} (n) := \sum_{i=1}^K \1 \{ R_i (t+1) \leq n\} .\]
Then we can construct $C_t(s)$ via its increments
\begin{equation}
\label{infinc}
C_{t+1} (s) - C_t (s) = B_{t+1} (s) - A_{t+1} ( C_t(s) ) .
\end{equation}
Since $R^N$ converges in distribution to $R$ as $N \to \infty$,  
$A^N_{t+1} (n) = \sum_{i=1}^K \1 \{R^N_i(t+1) \leq n\}$ converges
in distribution to $A_{t+1}(n)$;
specifically, using exchangeability,
\begin{align*} \Pr_N [ A^N_{t+1} (n) = k ] & = {K \choose k} \Pr_N [ R_1^N \leq n ,\ldots, R_k^N \leq n , R_{k+1}^N > n,
\ldots, R_K^N > n ] \\
& \to {K \choose k} \Pr  [ R_1  \leq n ,\ldots, R_k  \leq n , R_{k+1}  > n,
\ldots, R_K > n ] ,\end{align*}
as $N \to \infty$. This completes the proof.
\end{proof}

The following result gives some basic properties of the Markov chain defined above.

\begin{lm}
\label{genlimitchain}
Suppose that (A2) and (A3) hold.
Then for any $s \in (0,1)$, $C_t(s)$ is 
an irreducible, aperiodic Markov chain on $\Z^+$, with uniformly bounded jumps:
 $\Pr [ | C_{t+1} (s) - C_t (s) | > K ] = 0$.
The one-step mean drift of $C_t(s)$ is given by
\begin{equation}
\label{gendrift2}
\Exp [ C_{t+1} (s) - C_t (s) \mid C_t (s) = n] = K ( s -  G (n) ).
\end{equation}
\end{lm}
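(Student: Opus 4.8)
The plan is to verify the three asserted properties of $C_t(s)$ in turn, recycling the structure of the proof of Lemma \ref{lem0}. The Markov property and bounded-jump property are essentially free: by construction (see \eqref{infinc}), the increment $C_{t+1}(s) - C_t(s) = B_{t+1}(s) - A_{t+1}(C_t(s))$ depends on $C_t(s)$ only through the current value $n$, together with the fresh randomness $B_{t+1}(s)$ (a ${\rm Bin}(K,s)$ variable) and $R(t+1)$ (an independent copy of $R$), which are independent of the past; and since $0 \le B_{t+1}(s) \le K$ and $0 \le A_{t+1}(n) \le K$, the jumps are bounded by $K$ in absolute value, so $\Pr [ |C_{t+1}(s) - C_t(s)| > K] = 0$.

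For the mean drift \eqref{gendrift2}, I would take expectations in \eqref{infinc}. We have $\Exp[B_{t+1}(s)] = Ks$. For the other term, using the definition $A_{t+1}(n) = \sum_{i=1}^K \1\{R_i(t+1) \le n\}$ and the exchangeability of $R$ on $(\N \cup \{\infty\})^K$ noted in the proof of Lemma \ref{limitprobs}, we get $\Exp[A_{t+1}(n)] = K \Pr[R_1 \le n] = K G(n)$, where the last equality is the definition \eqref{Gdef} of $G$ as the $N \to \infty$ limit of $G_N$ (equivalently, $G(n) = \sum_{i=1}^n g(i) = \sum_{i=1}^n \kappa(i)$). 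Hence $\Exp[C_{t+1}(s) - C_t(s) \mid C_t(s) = n] = K(s - G(n))$, which is \eqref{gendrift2}. This is a direct limiting analogue of \eqref{gendrift} and requires no new ideas.

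The one genuinely substantive point is irreducibility and aperiodicity on the \emph{infinite} state-space $\Z^+$, for $s \in (0,1)$. The strategy mirrors the finite-$N$ argument in Lemma \ref{lem0}: it suffices to show that from each $n$ the chain can move to $n$, to $n+1$, and (when $n \ge 1$) to $n-1$ with positive probability, since this yields irreducibility (nearest-neighbour steps connect all of $\Z^+$) and aperiodicity (a self-loop at some state). Since $B_{t+1}(s)$ and $A_{t+1}(n)$ are independent given $C_t(s) = n$, and $B_{t+1}(s) \sim {\rm Bin}(K,s)$ takes every value in $\{0,1,\ldots,K\}$ with positive probability for $s \in (0,1)$, it is enough to show $\Pr[A_{t+1}(n) = 1] > 0$ for all $n \ge 1$ when $K \ge 2$, and $\Pr[A_{t+1}(n) \in \{0,1\}] > 0$ for all $n \ge 0$ when $K = 1$. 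Here assumption (A3) enters exactly as (A1) did before: for $K \ge 2$, $A_{t+1}(n) \ge \1\{R_1(t+1) = 1\} + \cdots$, wait --- more carefully, $A_{t+1}(n) \ge \1\{1 \in \{R_1(t+1),\ldots,R_K(t+1)\}\}$ gives $\Pr[A_{t+1}(n) = 1] \ge \Pr[\text{exactly the rank } 1 \text{ is selected}]$, which is positive because $g(1) > 0$ by (A3) and one can take the remaining selected ranks to be $\infty$ with positive probability (this is precisely the lost-mass term in \eqref{Rdist}); for $K = 1$, $\Pr[A_{t+1}(n) = 0] = \Pr[R_1 > n] = 1 - G(n)$, which is positive since (A3) forces $g(i) > 0$ for infinitely many $i$.

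The main obstacle, such as it is, lies in this last step: one must be careful that the event ``exactly rank $1$ is chosen and all other selections are $\infty$'' has positive probability under the limit law \eqref{Rdist} --- this is where the defective nature of the limit in (A2), and the expanded state-space $(\N \cup \{\infty\})^K$, play a non-cosmetic role. Concretely, for $K \ge 2$ the relevant probability is the $k=1$ instance of \eqref{Rdist}, $\Pr[R_1 = 1, R_2 = \infty, \ldots, R_K = \infty] = \kappa(1) - \sum_{i_2,\ldots,i_K} \kappa(1,i_2,\ldots,i_K)$; one should check this is strictly positive, which follows from $\kappa(1) = g(1) > 0$ together with the observation that the subtracted sum is at most... here one needs that not all the mass at rank $1$ is "used up" by finite companions --- in fact a cleaner route is to note $A_{t+1}(n) = 1$ can also be achieved via $\{R_i(t+1) = 1 \text{ for exactly one } i, \text{ all others } > n \text{ or } \infty\}$, and positivity of $\Pr[R_1 = 1]= g(1) > 0$ combined with exchangeability and the fact that the complementary coordinates can be pushed to $\infty$ (soaking up lost mass) suffices. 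I would phrase this carefully but expect it to be short. Everything else is routine bookkeeping parallel to Lemma \ref{lem0}.
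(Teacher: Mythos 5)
Your approach is the same as the paper's, which disposes of bounded jumps via the construction \eqref{infinc}, of the drift formula \eqref{gendrift2} via exchangeability of $R$ exactly as you do, and then says only that irreducibility and aperiodicity ``follow from a similar argument to that used in the proof of Lemma~\ref{lem0}\ldots now using (A3) in place of (A1)''. Your treatment of the Markov property, the jump bound, and the drift computation is correct and matches the paper's.

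The place you paused --- and you were right to --- is the $K \ge 2$ irreducibility step, and neither your ``cleaner route'' nor the paper's terse reference fully closes it. What the nearest-neighbour argument actually needs is not $\Pr[A_{t+1}(n) \ge 1] > 0$ (which is all the bound $A_{t+1}(n) \ge \1\{1 \in \{R_1,\ldots,R_K\}\}$ delivers), but that the increment $B_{t+1}(s) - A_{t+1}(n)$ can be $+1$, i.e.\ $\Pr[A_{t+1}(n) \le K-1] > 0$ for every $n \ge 1$. This is not a consequence of $g(1) > 0$: take $K = 2$ and $\kappa_N(1,2) = \kappa_N(2,1) = \tfrac12$ for all $N$, so $g(1) = \tfrac12$ and (A3) holds, yet $A_{t+1}(n) = 2$ a.s.\ for every $n \ge 2$ and $C_t(s)$ can never step upward from any state $\ge 2$; states $\ge 3$ are then transient. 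Your fallback argument needs $\Pr[R_1 = 1, R_i > n \text{ for all } i \ge 2] > 0$ for every $n$, and that too is not forced by (A3) and fails in this example. The same unjustified inequality already appears in the paper's proof of Lemma~\ref{lem0}, so the gap is inherited rather than introduced by you; it is harmless in the paper's Examples (E2) and (E3), where the selection rule always spreads mass to arbitrarily high ranks. If you want a clean fix, add the mild hypothesis that $\Pr[\text{all } R_i \le n] < 1$ for every $n \in \N$ (equivalently $\Pr[R_1 > n] > 0$ for all $n$, since $\max_i R_i \ge R_1$); under that, $\Pr[A_{t+1}(n) \le K-1] > 0$ for all $n$, and together with $\Pr[A_{t+1}(n) \ge 1] \ge g(1) > 0$ (for $n \ge 1$) and the fact that $B_{t+1}(s)\sim\mathrm{Bin}(K,s)$ hits every value in $\{0,\ldots,K\}$, the chain can make each of the steps $-1, 0, +1$ from any $n \ge 1$, giving irreducibility and aperiodicity on $\Z^+$.
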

\begin{proof}
The boundedness of the increments follows from the construction in (\ref{infinc}).
The irreducibility and aperiodicty  follow
 from a similar argument to that used  in the proof of Lemma \ref{lem0}
 in the finite-$N$ case, now using (A3) in place of (A1). The drift (\ref{gendrift2}) also follows
 similarly to the proof of (\ref{gendrift}) in Lemma \ref{lem0}; in the present case
 \[ \Exp [ A_{t+1} (n) ] = \sum_{i=1}^n \Pr [ i \in \{ R_1, \ldots, R_K \} ] =
 K \sum_{i=1}^n \Pr [ R_1 = i ] ,\]
 by exchangeability of $R$ (see the comment after (\ref{Rdist})). But $\sum_{i=1}^n \Pr [ R_1 =i ] = G(n)$,
 by (A2) and the definition of $G(n)$ at (\ref{Gdef}).
 \end{proof}

\subsection{Large-$N$ asymptotics}
\label{sec:asymptotics}

 We show that properties of the Markov chains $C_t^N(s)$, described in Section \ref{sec:genfiniteN},
 in the large $N$ limit can (at least when $s < s^*$)
  be described using the `$N=\infty$' Markov chain $C_t(s)$, described in Section \ref{sec:genlimitchain}.
  The main tool is Theorem \ref{mclim} stated and proved in Section \ref{sec:limits}.
Recall the definition of $\pi_N^s$ from Lemma \ref{lem0}.

 \begin{lm}
 \label{lem:limits}
 Suppose that (A1), (A2), and (A3) hold, and that $s \in (0,s^*)$. There
exists a unique stationary distribution $\pi^s$ for $C_t(s)$, with $\pi^s (n) >0$ for all
$n \in \Z^+$ and $\sum_{n \in \Z^+} \pi^s(n) =1$, such that
\begin{equation}
\label{stat2} \lim_{t \to \infty} \Pr [ C_t(s) = n ] =\pi^s(n) ,\end{equation}
for any initial distribution $C_0(s)$. In addition, the following results hold.
 \begin{itemize}
 \item[(a)] There exist $c>0$ and $C< \infty$ such that, for all $n \in \Z^+$,
 $\pi^s_N (n) \leq C \re^{-cn}$ and $\pi^s(n) \leq C \re^{-cn}$.
 \item[(b)] For any $n \in \Z^+$, $\lim_{N \to \infty} \pi^s_N(n) = \pi^s(n)$.
 \item[(c)] As $t \to \infty$, $\Exp_N [ C^N_t (s) ] \to \sum_{n=0}^N n \pi_N^s (n)$
 and $\Exp [ C_t (s) ] \to \sum_{n \in \Z^+} n \pi^s (n) < \infty$.
 \end{itemize}
 \end{lm}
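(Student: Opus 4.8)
The plan is to deduce the existence and uniqueness of a stationary distribution $\pi^s$ for $C_t(s)$, and the properties (a)--(c), from a Foster--Lyapunov analysis of the drift together with the general convergence result Theorem \ref{mclim} of Section \ref{sec:limits}. The starting observation is that, for $s < s^*$, the drift formula (\ref{gendrift2}) gives $\Exp[C_{t+1}(s)-C_t(s) \mid C_t(s)=n] = K(s-G(n))$, and since $G(n) \uparrow s^* > s$ there is some $n_1$ with $G(n) \geq (s+s^*)/2 > s$ for all $n \geq n_1$; hence the drift is bounded above by a strictly negative constant $-\eps$ for $n \geq n_1$. Combined with the uniformly bounded jumps (Lemma \ref{genlimitchain}), this is exactly the hypothesis of a standard geometric-ergodicity criterion: taking a Lyapunov function $f(n) = \re^{cn}$ for $c>0$ small, a Taylor-type estimate using bounded increments shows $\Exp[f(C_{t+1}(s)) \mid C_t(s)=n] \leq \lambda f(n)$ for $n \geq n_1$ with $\lambda < 1$, plus a bounded correction on the finite set $\{0,\ldots,n_1\}$. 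This yields that $C_t(s)$ is positive recurrent with a unique stationary distribution $\pi^s$, that (\ref{stat2}) holds (aperiodicity and irreducibility are from Lemma \ref{genlimitchain}), and that $\pi^s$ has a geometric tail $\pi^s(n) \leq C\re^{-cn}$, which is half of (a).

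For the finite-$N$ tail bound in (a), I would run the same argument uniformly in $N$. The point is that by (A2) and (\ref{Gdef}), $G_N(n) \to G(n)$ for each fixed $n$, so $G_N(n_1) \to G(n_1) > s$; hence for all $N$ large enough, $G_N(n) \geq G_N(n_1) \geq (s+s^*)/2$ for $n_1 \leq n \leq N$ (monotonicity of $G_N$), giving the same negative drift bound $-\eps$ on $\{n_1,\ldots,N\}$ for the finite-$N$ chain, with the {\em same} constants. The finitely many exceptional small values of $N$ can be absorbed by enlarging $C$. Thus the same Lyapunov estimate with the same $c$ and $\lambda$ applies to every $C_t^N(s)$, producing a bound $\pi_N^s(n) \leq C\re^{-cn}$ with constants independent of $N$. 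This uniform exponential tail is what feeds into Theorem \ref{mclim}.

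Part (b) is then an application of Theorem \ref{mclim}: the transition probabilities $p_N^s(n,m)$ converge pointwise to $p^s(n,m)$ by Lemma \ref{limitprobs}, all the chains are irreducible and aperiodic with uniformly bounded jumps, and the uniform Lyapunov/drift condition just established gives the uniform ergodicity hypothesis required by that theorem; its conclusion is precisely $\pi_N^s(n) \to \pi^s(n)$ for each fixed $n$. Part (c): the convergence $\Exp_N[C_t^N(s)] \to \sum_{n=0}^N n\pi_N^s(n)$ as $t\to\infty$ is just (\ref{meanconv}) from Lemma \ref{lem0}, and $\Exp[C_t(s)] \to \sum_n n\pi^s(n)$ follows from (\ref{stat2}) together with uniform integrability supplied by the geometric tail of $\pi^s$ (equivalently, a standard consequence of the geometric drift condition is convergence of the first moment, $\Exp[C_t(s)] \to \sum_n n\pi^s(n) < \infty$); the finiteness is immediate from $\pi^s(n) \leq C\re^{-cn}$.

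The main obstacle I anticipate is getting the drift bound {\em uniform in $N$} cleanly --- specifically, handling the interaction between the (possibly defective) limit $G$ and the approximants $G_N$ near the `boundary' of the state space. Since $G_N$ is only guaranteed to converge pointwise and $s^*$ is a double limit, one must be careful that the threshold index $n_1$ chosen from $G$ actually works for all large $N$; monotonicity of $n \mapsto G_N(n)$ on $\{1,\ldots,N\}$ is the key fact that makes this go through, but one should check that no pathology arises for $n$ close to $N$ (there is none, because the drift is controlled by $G_N(n) \geq G_N(n_1)$ regardless of how large $n$ is). Everything else is routine Foster--Lyapunov bookkeeping plus an appeal to Theorem \ref{mclim}.
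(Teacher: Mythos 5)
Your proposal follows essentially the same route as the paper: the paper's proof of Lemma \ref{lem:limits} consists precisely of checking the three hypotheses of Theorem \ref{mclim} --- uniformly bounded jumps (from Lemmas \ref{lem0} and \ref{genlimitchain}), a uniform (in $N$) negative drift bound for $n$ above a fixed threshold (obtained from (\ref{gendrift}), (\ref{gendrift2}), (A2), and the definition of $s^*$, exactly as in your careful handling of $G_N$ versus $G$), and pointwise convergence of transition probabilities (Lemma \ref{limitprobs}) --- and then invoking Theorem \ref{mclim} to get (a)--(c) at once. The explicit Foster--Lyapunov calculation with $f(n)=\re^{cn}$ that you carry out is a re-derivation of parts of Theorem \ref{mclim}(a),(b) and is not needed separately once that theorem is in hand, but it does not differ in substance from the paper's argument.
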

\begin{proof}
We will show that we can apply Theorem \ref{mclim} with $Y_t^N = C_t^N(s)$, $Y_t = C_t(s)$,
$S_N = \{0,1,\ldots,N\}$, and $S = \Z^+$.
Since $s \in (0,1)$ and (A1) holds, Lemma \ref{lem0} shows that $C_t^N(s)$ is an irreducible
Markov chain on $\{0,1,\ldots,N\}$, while, since (A2) and (A3) hold, Lemma \ref{genlimitchain}
implies that $C_t(s)$ is an irreducible Markov chain on $\Z^+$.
Lemmas \ref{lem0} and \ref{genlimitchain} also imply that
the increments of $C_t^N(s)$ and $C_t(s)$ are uniformly bounded in absolute value (by $K$) almost surely.
Thus (\ref{jumps}) holds.

Next we verify the drift conditions in (\ref{foster}).
 Since $s <  s^*$,
 there exists $\eps>0$
 such that $s < s^* -2\eps$. First consider the finite-$N$ case. By (A2) and the definition
 of $s^*$ at (\ref{sstar}), given $\eps$, we can take $N_0$ and $n_0$ such that for any
 $N \geq N_0$ and any $n \geq n_0$,
 \[ G_N (n) > s^* - \eps > s + \eps .\]
 So we have from (\ref{gendrift}) that,   for
 all $N \geq N_0$ and $n \geq n_0$,
 \[ \Exp_N [ C^N_{t+1}(s) - C^N_t(s) \mid C^N_t(s) = n ] \leq - \eps K. \]
 A similar argument holds for $C_t(s)$, using (\ref{gendrift2}). Thus (\ref{foster}) is satisfied.
  Finally, we verify (\ref{problim}) by Lemma \ref{limitprobs}.
 Thus Theorem \ref{mclim} applies, yielding the claimed results.
\end{proof}

The next result deals with the case $s > s^*$. Recall that  $\tau_N(s)$
defined by  (\ref{taudef}) denotes the time of the first return of $C^N_t(s)$ to $0$.

\begin{lm}
\label{lem:null}
 Suppose that (A1), (A2), and (A3) hold, and that $s > s^*$. Then $\lim_{N \to \infty} \Exp_N [ \tau_N (s) ] = \infty$.
\end{lm}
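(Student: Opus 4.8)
The plan is to show that, with the chain started from $C^N_0(s)=0$ (so that $\tau_N(s)$ is the first return time of $C^N_t(s)$ to $0$), the expected return time diverges with $N$, by exploiting the fact that, once $N$ is large, $C^N_t(s)$ has strictly positive drift over an arbitrarily long initial block of states. We may assume $s \in (s^*,1)$: if $s=1$ then $C^N_t(1) \equiv N$, so $\tau_N(1) = \infty$ and there is nothing to prove, while $s > s^* \geq 0$ forces $s \in (0,1)$, so Lemma \ref{lem0} applies and $C^N_t(s)$ is an irreducible Markov chain on $\{0,1,\ldots,N\}$ whose jumps are bounded by $K$ in absolute value. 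It then suffices to prove that $\Exp_N[T_0 \mid C^N_0(s)=1] \to \infty$ as $N \to \infty$, where $T_0 := \min\{t \geq 0 : C^N_t(s)=0\}$: indeed, conditioning on the first step and using $\Pr_N[C^N_1(s)=1 \mid C^N_0(s)=0] = Ks(1-s)^{K-1}>0$ gives $\Exp_N[\tau_N(s)] \geq Ks(1-s)^{K-1}\,\Exp_N[T_0 \mid C^N_0(s)=1]$.

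Fix $\eps \in (0, (s-s^*)/2)$. For each fixed $n$, (A2) gives $G_N(n) \to G(n)$ as $N \to \infty$, and since $G$ is nondecreasing with $\lim_n G(n)=s^*$ we have $G(n) \leq s^*$ for every $n$. Hence for each $L \in \N$ there is $N_0(L)<\infty$, which we may take to satisfy $N_0(L) \geq L$, such that $G_N(n) \leq s^*+\eps$ for all $n \in \{0,1,\ldots,L\}$ and all $N \geq N_0(L)$; by the drift identity (\ref{gendrift}), for such $N$ and $n$,
\[ \Exp_N[\, C^N_{t+1}(s)-C^N_t(s) \mid C^N_t(s)=n \,] = K\bigl(s-G_N(n)\bigr) \geq K(s-s^*-\eps) \geq K\eps .\]

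The key step turns this into a lower bound on an escape probability that is uniform in $L$ and $N$. Because the increments of $C^N_t(s)$ are bounded by $K$, the elementary inequality $\re^x \leq 1+x+\tfrac{1}{2} x^2\re^{|x|}$ shows there exists $\theta = \theta(K,\eps) < 0$, independent of $L$ and $N$, with $\Exp[\re^{\theta\Delta}] \leq 1$ for every random variable $\Delta$ satisfying $|\Delta|\leq K$ a.s.\ and $\Exp[\Delta] \geq K\eps$. Put $\sigma_L := \min\{t \geq 0 : C^N_t(s) \geq L\}$ and $T := T_0 \wedge \sigma_L$. For $N \geq N_0(L)$, every state visited strictly before $T$ lies in $\{1,\ldots,L-1\}$ and hence has drift at least $K\eps$, so $\re^{\theta C^N_{t\wedge T}(s)}$ is a supermartingale. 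Since $C^N_t(s)$ is a finite irreducible chain, $T < \infty$ almost surely and the stopped process is bounded, so optional stopping, applied to the chain started from $C^N_0(s)=1$, gives
\[ \re^{\theta} \geq \Exp_N\bigl[\re^{\theta C^N_T(s)}\bigr] \geq \Pr_N[\,C^N_T(s)=0\,] = \Pr_N[\,T_0<\sigma_L \mid C^N_0(s)=1\,], \]
whence $\Pr_N[\,\sigma_L < T_0 \mid C^N_0(s)=1\,] \geq 1-\re^{\theta} =: c > 0$.

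Finally we combine the estimates. On $\{\sigma_L < T_0\}$ we have $T_0 \geq \sigma_L \geq (L-1)/K$, since at least $(L-1)/K$ steps are needed to climb from $1$ to level $L$ with jumps of size at most $K$; therefore $\Exp_N[T_0 \mid C^N_0(s)=1] \geq c(L-1)/K$ for all $N \geq N_0(L)$, and with the reduction of the first paragraph, $\liminf_{N\to\infty}\Exp_N[\tau_N(s)] \geq s(1-s)^{K-1}c(L-1)$ for every $L \in \N$. Letting $L \to \infty$ yields $\Exp_N[\tau_N(s)] \to \infty$. The one genuinely delicate point is the key step: the exponential tilt $\theta$ must be chosen using only the jump bound $K$ and the drift lower bound $K\eps$, so that the escape probability stays bounded away from $0$ simultaneously in $L$ and $N$; this uniformity is exactly what permits sending $N \to \infty$ first for fixed $L$ and only then letting $L \to \infty$. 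Everything else (the one-step reduction from $0$, the deterministic lower bound on $\sigma_L$, the supermartingale bookkeeping) is routine.
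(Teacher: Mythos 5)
Your proof is correct and follows essentially the same route as the paper's: a drift bound $K(s-G_N(n))\geq K\eps$ on an initial block $\{1,\ldots,L\}$ for $N\geq N_0(L)$, an exponential supermartingale $\re^{\theta C^N_{t}(s)}$ with $\theta<0$ chosen uniformly in $L$ and $N$, optional stopping to get a uniformly positive escape probability, the deterministic bound $\sigma_L\geq(L-1)/K$ from bounded jumps, and then $L\to\infty$. The only cosmetic difference is that you spell out the one-step reduction from $C^N_0(s)=0$ to $C^N_0(s)=1$ via $\Pr_N[C^N_1(s)=1\mid C^N_0(s)=0]=Ks(1-s)^{K-1}$, which the paper dispatches with the phrase ``it suffices to suppose $C^N_0(s)\geq 1$.''
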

\begin{proof}
 Suppose that $s > s^*$. Then, for  some $\eps>0$,
 $s-s^* - \eps > \eps$.
 Fix $x \in \N$. Since, by (A2),
  $\lim_{N \to \infty}
 G_N(n) = G(n) \leq s^*$ for any $n$, we can find $N_0(x)$ such that
 $G_N (n) \leq s^* + \eps$ for any $n \leq x$ and any $N \geq N_0(x)$.
 Hence, by (\ref{gendrift}),
 \begin{equation}
 \label{posdrift} \Exp_N [ C^N_{t+1}(s) - C^N_t (s) \mid C^N_t(s) = n] \geq K \eps ,\end{equation}
 for any $N \geq N_0(x)$ and any $n \leq x$, where $\eps>0$ does not depend on $x$.
 We show that (\ref{posdrift}) implies that $C_t^N(s)$ has a positive probability
 (uniform in $x$) of reaching $x$ before returning to $0$, which will imply the result.
 It suffices to suppose that $C_0^N(s) \geq 1$.
 
 To ease notation,   write $\tau:= \tau_N(s)$ for the remainder of this proof.
 To estimate the required hitting probability, set $W_t := \exp \{ -\delta C^N_t (s) \}$, for $\delta>0$
 to be chosen later. Now
 \begin{align*}
 W_{t+1} - W_t & = \exp \{ - \delta C_t^N(s) \} \left( \exp \{ - \delta ( C_{t+1}^N(s) - C_t^N (s) ) \} -1 \right) \\
 & \leq \exp \{ - \delta C_t^N(s) \} \left( - \delta ( C_{t+1}^N(s) - C_t^N (s) ) + M \delta^2 \right) ,
 \end{align*}
 for some absolute constant $M$, using the fact that the increments of $C^N_t(s)$ are uniformly bounded.
 Taking expectations and using (\ref{posdrift}), we have that, on $\{ C^N_t(s) \leq x \}$,
 \[ \Exp_N [ W_{t+1} - W_t \mid C^N_t(s) ] \leq \exp \{ - \delta C_t^N(s) \} \left( - K \eps \delta + M\delta^2 \right) \leq 0 ,\]
 for $\delta \leq \delta_0$ small enough, where $\delta_0>0$ depends only on $\eps$ and not on $x$ or $N$.
 Let $\nu_x := \min \{ t \in \Z^+ : C_t^N(s) \geq x \}$. Then we have shown that $W_{t \wedge \tau \wedge \nu_x }$
 is a nonnegative supermartingale, which converges a.s.\ to $W_{\tau \wedge \nu_x}$. It follows that
 \begin{align*}
 \re^{-\delta} \geq W_0 \geq \Exp_N [ W_{\tau \wedge \nu_x } ]
 \geq \Pr_N [ \tau < \nu_x ]  
 ,\end{align*}
 so that $\Pr_N [ \nu_x < \tau ] \geq   1 - \re^{-\delta} =: p$, 
 where $p>0$ does not depend on $x$ or on $N$.
 The fact that $C^N_t(s)$ has increments of size at most $K$ implies that
 on $\{ \nu_x < \tau\}$ we have $\{ \tau \geq x/K\}$. Hence $\Pr_N [ \tau \geq x/K ] \geq \Pr_N [ \nu_x < \tau ]
 \geq p$,
 so that $\Exp_N [\tau] \geq px/K$ for all $N \geq N_0 (x)$.
 Since $x$ was arbitrary, the result follows.
\end{proof}

\subsection{Proofs of Theorems \ref{thm00} and \ref{thm3}}
\label{sec:proofs}

 \begin{proof}[Proof of Theorem \ref{thm00}.]
  First suppose that $s < s^*$.
  Then Lemma \ref{lem:limits} applies.
  By Lemma  \ref{lem:limits}(a),
 $\pi^s_N (n) \leq C \re^{-cn}$ where $C < \infty$ and $c>0$ do not depend
 on $N$ or $n$.
 In particular, for any $p >0$, $\sup_N \sum_{n \in \Z^+} n^p \pi_N^s (n) < \infty$.
 Moreover, by  Lemma  \ref{lem:limits}(b), $\pi_N^s (n) \to \pi^s (n)$ as $N \to \infty$.
 Hence for any $p>0$, by uniform integrability, $\sum_n n^p \pi_N^s (n) \to \sum_n n^p \pi^s(n)$
 as $N \to \infty$. Together with  Lemma  \ref{lem:limits}(c), this implies that, for $s <  s^*$,
 \begin{equation}
 \label{pimean}
  \lim_{N \to \infty} \lim_{t \to \infty} \Exp_N [ C^N_t (s) ]
 = \lim_{ N \to \infty} \sum_{n\in \Z^+} n  \pi_N^s (n) = \sum_{n \in \Z^+} n  \pi^s(n) < \infty.\end{equation}

 Next suppose that $s >  s^*$.
Then, for fixed $x>0$ and some $\eps>0$ (not depending on $x$)
 we have that (\ref{posdrift}) holds 
 for any $N \geq N_0(x)$ and any $n \leq x$.
 On the other hand, if $C^N_t(s) >x$, we have that $C^N_{t+1} (s) \geq x-K$
 (by bounded jumps). It follows that
 \[ \Exp_N [ C^N_{t+1} (s) - C^N_t(s) \mid C^N_t (s) ] \geq K(1+\eps) \1 \{ C^N_t(s) \leq x \} -K  .\]
 Taking expectations implies that
 \[ \Exp_N [ C^N_{t+1} (s) ] - \Exp_N [ C^N_t (s) ] \geq K(1+\eps) \Pr_N [ C^N_t(s) \leq x ] -K .\]
 By (\ref{meanconv}),
 the left-hand side of the last display tends to $0$ as $t \to \infty$. 
 It follows that, for some $\delta>0$ that depends on $\eps$ but not on $x$,
 $\Pr_N [ C^N_t(s) \geq x ] \geq \delta$ for all $t$ large enough. Hence
$\Exp_N [ C^N_t(s) ] \geq x \delta$, 
 for all $N$ and $t$ sufficiently large, 
 Since $x$ was arbitrary, and $\delta$ did not depend on $x$,
  the second part of the theorem follows.
  \end{proof}

\begin{proof}[Proof of Theorem \ref{thm3}.]
For $s <  s^*$, the statement follows immediately from Theorem \ref{thm0}.

Suppose that $s > s^*$. For the duration of this proof, we write $\tau$ for $\tau_N(s)$ to ease notation.
In this case, Lemma \ref{lem:null} applies, showing that $\lim_{N \to \infty} \Exp_N [\tau] = \infty$.
We claim that $C_t^N(s)$ is asymptotically null in the sense that, for any $n \in \Z^+$,
\begin{equation}
\label{eq5}
\lim_{N \to \infty} \lim_{t \to \infty} \Pr_N [ C^N_t (s) \leq n ] = 0. \end{equation}
Indeed, (\ref{eq5}) follows
from Lemma \ref{lem:null}
and the occupation-time representation for the stationary distribution
of an irreducible, positive-recurrent Markov chain  (see e.g.\ \cite[Corollary I.3.6, p.\ 14]{asmussen})
 gives: 
  \[ \lim_{t \to \infty} \Pr_N [ C^N_t (s) \leq n ] = \sum_{x = 0}^n \pi^s_N (x) = \frac{ \Exp_N \sum_{t=0}^{\tau -1} \1 \{ C_t^N (s) \leq n \} }
  { \Exp_N [ \tau ] } ;\]
  in the final fraction, the denominator tends to infinity with $N$ (by Lemma \ref{lem:null})
  while the numerator is uniformly bounded in $N$ since the expected number of visits
  to any bounded interval stays bounded, by irreducibility (uniform in $N$). Thus (\ref{eq5}) holds for $s > s^*$.

Taking expectations in (\ref{gendrift}) yields
\begin{equation}
\label{eq6} \Exp_N [ C^N_{t+1} (s) ] - \Exp_N [ C^N_t (s) ] = K s  - K \Exp_N [ G_N(C^N_t(s))  ] .\end{equation}
The left-hand side of (\ref{eq6}) tends to $0$ as $t \to \infty$ by (\ref{meanconv}).
Also, for $n_0$ as in (A4),
\[  \lim_{N \to \infty} \lim_{t \to \infty} \Exp_N [ G_N ( C^N_t (s) ) \1 \{ C_t^N (s) < n_0 \} ] \leq  \lim_{N \to \infty} \lim_{t \to \infty} \Pr_N [ C^N_t (s) \leq n_0 ] ,\]
which is $0$ by (\ref{eq5}). 
Hence, taking limits in (\ref{eq6}), we obtain
\begin{align*}
 s    = \lim_{N \to \infty} \lim_{t \to \infty} \Exp_N [ G_N(C^N_t(s))  \1 \{ C^N_t (s) \geq  n_0 \} ] .
\end{align*}
By condition (A4), a.s.,
\[ G_N(C^N_t(s))  \1 \{ C^N_t (s) \geq  n_0 \} = s^* + (1-s^*) \frac{C^N_t(s)}{N} + \eps_N ,\]
where $\eps_N = o(1)$ as $N \to \infty$, uniformly in $C^N_t(s)$ (and hence uniformly in $t$). Thus
\[ s = \lim_{N \to \infty} \lim_{t \to \infty} \left( s^* + (1-s^*) N^{-1} \Exp_N [  C^N_t(s)  ] \right) ,\]
which yields the result (\ref{mu}) for $s > s^*$.

Finally, suppose that $s = s^*$.
Then $C_t^N(s) \leq C_t^N(r)$ for
all $t$,  $N$, and  $r > s^*$.
Hence
\begin{align*} \lim_{N \to \infty} \lim_{t \to \infty} \left( N^{-1} \Exp [ C_t^N(s) ] \right)
& \leq \lim_{r \downarrow s} \lim_{N \to \infty} \lim_{t \to \infty} \left( N^{-1} \Exp [ C_t^N(r) ] \right) \\
& = \lim_{r \downarrow s} V(r) ,\end{align*}
by the previous part of the proof,
since $r > s^*$. The latter limit is $V (s^*) =0$,
so the result (\ref{mu}) is proved for $s = s^*$ as well.
\end{proof}

 \section{Proofs for Section \ref{main}}
\label{sec:proofs2}

\subsection{Overview}

In this section we work with the $K=2$ case of Example (E3), working towards proofs
of the results in Section \ref{main}. The organization of the section broadly mirrors
that of Section \ref{sec:genproofs}. In Sections \ref{sec:finiteN} and \ref{sec:limitchain}
we return to the finite-$N$ Markov chain $C^N_t(s)$ and the limit chain
$C_t(s)$, respectively, describing their properties more explicitly in this special
case, for which exact computations are available. Then in Section \ref{sec:remains}
we give the proofs of our remaining results, Theorems \ref{thm0b}, \ref{thm1}, and \ref{thm2}.

\subsection{The Markov chain $C^N_t(s)$}
\label{sec:finiteN}

 For this model, the following result complements the general 
 Lemma \ref{lem0}.  Write
$p^s_N(n,m) := \Pr_N [ C^N_{t+1} (s) = m \mid C^N_t(s) = n]$.
Recall the definition of $F_N$ from (\ref{FNdef}), and that $F_N(0) = F_N(1)=0$.
In the case where $F_N(n) = \frac{n-1}{N-1}$,
$p^s_N(n,m)$ was written down in equations (1)--(3) in \cite{deboer}. 

\begin{lm}
\label{lem1}
For any $s \in [0,1]$, $(C^N_t(s))_{t \in \Z^+}$ is a Markov
chain on $\{0,1,2,\ldots, N\}$ under $\Pr_N$.
The transition probabilities are
given by
\[ p_N^s (0,0) = (1-s)^2, ~~ p_N^s (0,1)= 2s(1-s), ~~ p_N^s (0,2) = s^2, \]
and for $n \geq 1$,
\begin{align}
\label{tp}
p_N^s (n,n-2) & = (1-s)^2 F_N(n) \nonumber\\
p_N^s (n,n-1) & =  2s (1-s) F_N(n) + (1-s)^2 (1- F_N(n)) \nonumber\\
p_N^s (n,n) & = s^2 F_N (n) + 2 s (1-s) (1- F_N (n) ) \nonumber\\
p_N^s ( n, n+1) & = s^2 (1 - F_N(n) ) .\end{align}
Moreover, for $n \geq 0$,
\begin{equation}
\label{drift1}
 \Exp_N [ C^N_{t+1} (s) - C^N_t (s) \mid C^N_t(s) = n ] = 2s - (1 + F_N( n) ) \1 \{ n \neq 0 \}
 .\end{equation}
\end{lm}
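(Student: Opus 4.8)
The plan is to specialize the general computation from the proof of Lemma \ref{lem0} to the case $K=2$ with the selection distribution $\kappa_N(1,i) = f_N(i)/2$. Recall from (\ref{geninc}) that, conditional on $C^N_t(s) = n$, the increment decomposes as $C^N_{t+1}(s) - C^N_t(s) = B_{t+1}(s) - A^N_{t+1}(n)$, where $B_{t+1}(s) \sim {\rm Bin}(2,s)$ counts the new uniform points landing in $[0,s]$ and $A^N_{t+1}(n)$ counts how many of the two removed ranks are $\leq n$; these are independent given $C^N_t(s)=n$. So the Markov property is immediate from Lemma \ref{lem0} (or can be re-derived from (\ref{geninc})), and the bulk of the work is identifying the law of $A^N_{t+1}(n)$ in this model and then convolving with ${\rm Bin}(2,s)$.

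The key step is the distribution of $A^N_{t+1}(n)$. Since we always remove the minimum (rank $1$) and one other rank $I$ drawn from $\{2,\ldots,N\}$ according to $f_N$, for $n \geq 1$ exactly one removed rank (namely rank $1$) is automatically $\leq n$, and the second contributes an extra unit precisely when $I \leq n$, which has probability $F_N(n)$ by (\ref{FNdef}). Hence, for $n\geq 1$, $\Pr_N[A^N_{t+1}(n) = 2] = F_N(n)$ and $\Pr_N[A^N_{t+1}(n) = 1] = 1 - F_N(n)$, while for $n = 0$ we have $A^N_{t+1}(0) = 0$ deterministically. Now $B_{t+1}(s)$ takes values $0,1,2$ with probabilities $(1-s)^2$, $2s(1-s)$, $s^2$. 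Convolving: for $n=0$ the increment is just $B_{t+1}(s)$, giving $p_N^s(0,0)=(1-s)^2$, $p_N^s(0,1)=2s(1-s)$, $p_N^s(0,2)=s^2$; for $n\geq 1$ we split on the value of $A^N_{t+1}(n)$ and read off, e.g., $p_N^s(n,n-2)$ requires $B_{t+1}(s)=0$ and $A^N_{t+1}(n)=2$, contributing $(1-s)^2 F_N(n)$; $p_N^s(n,n+1)$ requires $B_{t+1}(s)=2$ and $A^N_{t+1}(n)=1$, contributing $s^2(1-F_N(n))$; and the two middle entries $p_N^s(n,n-1)$, $p_N^s(n,n)$ each collect two terms in exactly the way displayed in (\ref{tp}). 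A quick sanity check that the four probabilities sum to $1$ (they reduce to $((1-s)+s)^2 (F_N(n) + (1-F_N(n))) = 1$) confirms the bookkeeping.

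For the drift formula (\ref{drift1}), I would either sum $\sum_m (m-n) p_N^s(n,m)$ directly using (\ref{tp}), or — more cleanly — take expectations in the increment decomposition: $\Exp_N[C^N_{t+1}(s) - C^N_t(s)\mid C^N_t(s)=n] = \Exp[B_{t+1}(s)] - \Exp_N[A^N_{t+1}(n)] = 2s - \Exp_N[A^N_{t+1}(n)]$, and $\Exp_N[A^N_{t+1}(n)] = (1 + F_N(n))\1\{n\neq 0\}$ from the distribution identified above (with the indicator handling the $n=0$ case where $A^N_{t+1}(0)=0$). This also matches the general drift (\ref{gendrift}) with $K=2$: there, the drift is $2(s - G_N(n))$, and since $G_N(n) = (1+F_N(n))/2$ for $n \geq 1$ while $G_N(0)=0$, one recovers (\ref{drift1}) exactly. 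I do not anticipate a genuine obstacle here; the only mild subtlety is being careful about the boundary case $n=0$ (where rank $1$ is not $\leq 0$, so $A^N_{t+1}(0)=0$ and no downward moves are possible), which is why the transition probabilities at $n=0$ are listed separately and why the indicator $\1\{n\neq 0\}$ appears in (\ref{drift1}).
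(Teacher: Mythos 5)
Your proposal is correct and follows essentially the same line of reasoning as the paper: you identify the increment as the difference between the number of new uniforms landing in $[0,s]$ (a ${\rm Bin}(2,s)$) and the number of removed ranks at most $n$ (here $\equiv 1 + \1\{I \le n\}$ for $n \ge 1$, $\equiv 0$ for $n=0$), then convolve. The paper phrases this a bit more informally by directly enumerating the cases, and derives the drift from (\ref{tp}) rather than from $\Exp[B]-\Exp[A]$, but these are superficial differences; your cross-check against the general drift (\ref{gendrift}) with $K=2$ is a sensible addition.
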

\begin{proof} 
Suppose that $C^N_t(s) = n$. If $n=0$, then the two points that we select come from $(s,1)$,
and $C^N_{t+1}(s)$ is 1 or 2 according to whether 1 or 2 of the new points land in
$[0,s]$: each does so, independently, with probability $s$. This gives $p_N^s(0,m)$.

Suppose that $n \geq 1$. In this case, $X_t^{(1)} \leq s$ is always removed.
Then $C^N_{t+1}(s)$ is either (i) $n-2$; (ii) $n-1$;  or (iii) $n$ according to whether
(i) the second point selected for removal is  one of $\{ X_t^{(2)},\ldots, X_t^{(n)}\}$, and both new points
fall in $(s,1)$; (ii) the second point is one of $\{ X_t^{(2)},\ldots, X_t^{(n)}\}$, and exactly one of the two new points
falls in $(s,1)$, or the second point is one of $\{ X_t^{(n+1)},\ldots, X_t^{(N)} \}$, and both new points
fall in $(s,1)$; or (iii) the second point is one of $\{ X_t^{(n+1)},\ldots, X_t^{(N)} \}$, and both new points
fall in $[0,s]$. Thus we obtain the expressions in (\ref{tp}), noting that the probability
that one of $\{ X_t^{(2)},\ldots, X_t^{(n)}\}$ is selected as the second point for removal
is $F_N(n)$. We then obtain (\ref{drift1}) from (\ref{tp}).
\end{proof}

 \subsection{The `$N = \infty$' chain $C_t(s)$}
 \label{sec:limitchain}

Again we consider the `$N = \infty$' chain $C_t(s)$
as described in Section \ref{sec:genlimitchain}.
In the special case where (A2$'$) holds, so that $F$ is the limiting
 distribution given by (\ref{Fdef}), and $\alpha =0$, the transition
 probabilities  $p^s  (n,m) = \Pr  [ C_{t+1}(s) = m \mid C_t(s) = n ]$ are
 given for
$n =0$ by
\[ p ^s(0, 0)  = (1-s)^2, ~~ p ^s(0,1)  = 2s(1-s) , ~~ p ^s(0,2)  = s^2 ,\]
and for $n\in \N$ by
\begin{align*}
p ^s(n, n-1)  = (1-s)^2, ~~ p ^s(n,n)  = 2s(1-s) , ~~ p ^s(n,n+1)  = s^2 .
\end{align*}

 In their analysis, de Boer {\em et al.} \cite{deboer}
discuss this Markov chain, although they do not give full justification
that it can be used to describe the asymptotics of the finite-$N$ chains $C^N_t(s)$;
this is justified in a specific sense by our results from Section \ref{sec:asymptotics},
which rely on the technical tools from Section \ref{sec:limits}.
In the remainder of this section we present some basic properties of $C_t(s)$.

The Markov chain $C_t(s)$ is 
{\em almost} a nearest-neighbour random walk (or birth-and-death chain), apart from the
 fact that from $0$ we can make a jump of size $2$. However, the form of the transition probabilities
 allows us to use a trick to transform this into a nearest-neighbour process (see the proof of Lemma \ref{limitlem} below).
 We will prove the following result,
  which corrects an error in the stationary distribution proposed in \cite{deboer}.

 \begin{lm}
 \label{limitlem}
 Suppose that $\alpha =0$ and $s< 1/2$.
 Then for any $n \in \Z^+$, 
 \begin{equation}
 \label{pilim}
  \lim_{t \to \infty} \Pr[ C_t (s) = n] = \pi^s (n),
  \end{equation}
 and the stationary distribution $\pi^s$ satisfies (\ref{pi}).
 Moreover,
 \begin{align}
 \label{mulim0}
  \lim_{t \to \infty} \Exp  [ C_t (s) ]
 & = 2s + \frac{s^2}{1-2s} .
 \end{align}
 \end{lm}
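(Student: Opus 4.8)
The plan is to pin down the stationary distribution of the chain $C_t(s)$ explicitly and then read off both (\ref{pilim})--(\ref{pi}) and (\ref{mulim0}). We may assume $0<s<1/2$; the case $s=0$ is degenerate (the chain is absorbed at $0$) and both assertions reduce to the statement that $C_t(0)$ decreases a.s.\ to $0$ with limiting mean $0$, which is immediate. For $0<s<1/2$, Lemma \ref{genlimitchain} (applicable here since $\alpha=0$ makes (A2) and (A3) hold) already gives that $C_t(s)$ is irreducible and aperiodic on $\Z^+$; aperiodicity is anyway clear from the self-loop probability $p^s(0,0)=(1-s)^2>0$. Write $q:=s/(1-s)$, so $q\in(0,1)$ and $s^2/(1-s)^2=q^2$. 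Away from the boundary the chain is a lazy reflected nearest-neighbour walk with up-probability $s^2$ and down-probability $(1-s)^2$; the only non-nearest-neighbour feature is the size-$2$ jump $0\to 2$ of probability $s^2$. Conceptually this can be removed by processing the two insertions of the update rule one at a time, with the removal handled in between, which turns each step of $C_t(s)$ into two steps of a genuine nearest-neighbour chain on $\Z^+\times\{\mathrm{phase}\}$; in practice it is cleanest to proceed via the flux-balance equations, as below.

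First I would determine a stationary probability vector $\pi=\pi^s$. Since the chain is skip-free to the left, stationarity is equivalent to the flux-balance relations across the cuts $\{0,1,\dots,n\}$ versus $\{n+1,n+2,\dots\}$. For $n\geq 2$ the only crossing transitions are $n\to n+1$ and $n+1\to n$, giving $\pi(n)s^2=\pi(n+1)(1-s)^2$, hence $\pi(n)=q^{2(n-2)}\pi(2)$ for $n\geq 2$; in particular the tail is geometric, hence summable. For $n=1$ the crossings are $1\to 2$, $0\to 2$ and $2\to 1$, giving $(\pi(0)+\pi(1))s^2=\pi(2)(1-s)^2$. For $n=0$ the crossings are $0\to 1$, $0\to 2$ and $1\to 0$, giving $\pi(0)\bigl(2s(1-s)+s^2\bigr)=\pi(1)(1-s)^2$, i.e.\ $\pi(1)=\pi(0)\,s(2-s)/(1-s)^2$. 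These relations determine $\pi$ up to the constant $\pi(0)$; imposing $\sum_{n\geq0}\pi(n)=1$ (legitimate since the tail is geometric) and using the identities $(1-s)^2+s(2-s)=1$ and $(1-s)^2-s^2=1-2s$ fixes $\pi(0)=1-2s$ and yields precisely the formula (\ref{pi}), with $\pi^s(n)=(1-q^2)q^{2(n-1)}$ for $n\geq2$. Because an irreducible chain that carries a stationary probability distribution is positive recurrent (with that distribution unique) and the chain is aperiodic, the convergence theorem for countable Markov chains gives (\ref{pilim}) for every initial law.

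It remains to compute the mean. Since $\alpha=0$ forces $s^*=1/2$, we have $s\in(0,s^*)$, so Lemma \ref{lem:limits}(c) applies and gives $\lim_{t\to\infty}\Exp[C_t(s)]=\sum_{n\geq0}n\,\pi^s(n)<\infty$ (alternatively this follows from convergence in distribution to $\pi^s$ together with uniform integrability of $(C_t(s))_{t}$, the latter from a geometric Lyapunov estimate of the type used in the proof of Lemma \ref{lem:null}). Splitting off $n=0,1$ and summing the geometric part, $\sum_{n\geq2}n(1-q^2)q^{2(n-1)}=q^2(2-q^2)/(1-q^2)$, so that $\sum_{n\geq0}n\,\pi^s(n)=\pi^s(1)+q^2(2-q^2)/(1-q^2)=(2s-q^2)+q^2(2-q^2)/(1-q^2)=2s+q^2/(1-q^2)$, and $q^2/(1-q^2)=s^2/\bigl((1-s)^2-s^2\bigr)=s^2/(1-2s)$, which is (\ref{mulim0}). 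I do not anticipate any real obstacle; the one place demanding care is the bookkeeping at the boundary state $0$ — the size-$2$ jump and the aggregated weight $2s(1-s)+s^2$ in the upward flux out of $0$ — which is exactly why recasting the balance equations as flux-across-cut relations is convenient, and it is also worth verifying that the guessed $\pi^s$ really does sum to one before invoking positive recurrence.
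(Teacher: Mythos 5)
Your proof is correct, and all the arithmetic checks out (the flux equations, the normalization $\pi^s(0)=1-2s$, the alternative form $\pi^s(1)=(1-2s)s(2-s)/(1-s)^2=2s-(s/(1-s))^2$, and the mean). The route you take, though, differs from the paper's. The paper exploits the coincidence that the chain's transition law out of state $0$ and out of state $1$ induce the same law on the partition $\{\{0,1\},\{2\},\{3\},\dots\}$, and therefore lumps $\{0,1\}$ into a single state $\bar 0$ to obtain a genuine reversible birth-and-death chain on $\Z^+$ (citing Burke--Rosenblatt to justify that the lumped process is Markov); it then solves the detailed balance equations for $\bar\pi^s$ and finally disaggregates $\bar 0$ via one stationary equation. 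You instead write down the cut (flux-balance) relations directly on the original chain, which cleanly absorbs the size-$2$ jump $0\to 2$ into the cuts at $n=0$ and $n=1$ without any state aggregation or appeal to reversibility. Both are short; the paper's lumping yields a reversible chain and so gets the geometric tail for free from detailed balance, while your cut method is more elementary and avoids having to justify that the merged process is a Markov chain. One small imprecision: you say ``\emph{Since the chain is skip-free to the left}, stationarity is equivalent to the flux-balance relations across the cuts'' --- in fact the equivalence of global balance with zero net flux across every cut $\{0,\dots,n\}$ holds for any chain on $\Z^+$ (the two are related by finite differencing); skip-freeness to the left is only what makes the right-to-left flux a single term and hence the equations tractable. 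This does not affect correctness.
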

 \begin{proof}
  Observe that the probability of a jump from $0$ to $2$ is the same as that from $1$ to $2$ (namely, $s^2$),
 while the probability of a jump from $0$ into the set $\{0,1\}$ is the same as that from $1$
 into $\{0,1\}$ ($1-s^2$). So we can merge $\{0,1\}$ into a single state and preserve the Markov property.
 (A formal verification of the preservation of the Markov property under this transformation
 is provided by, for example, \cite[Corollary 1]{br}.)
  This   gives  a genuine birth-and-death chain on a state-space isomorphic to $\Z^+$. Call this new state-space
  $\{ \bar 0, \bar 1, \bar 2, \ldots \}$, so that $\bar 0$ corresponds to $\{0,1\}$ and $\bar n$ for $n \geq 1$
  corresponds to $n+1$ in the original state-space. This new Markov chain has transition probabilities
  \[ q^s (\bar 0, \bar 0) = 1 -s^2, ~~ q^s(\bar 0,\bar 1) = s^2, ~~\textrm{and for } \bar n \geq 1, ~~ q^s (\bar n, \bar m ) = p^s (n+1, m+1) .\]
  This Markov chain is reversible and solving the detailed balance equations (cf e.g.\ \cite[\S I.12, pp.\ 71--76]{chung})
   we obtain the stationary distribution
  $\bar \pi^s$
  for $s < 1/2$ as
  \[ \bar \pi^s ({\bar n}) = \left( 1 - \left( \frac{s}{1-s} \right)^2 \right) \left( \frac{s}{1-s} \right)^{2 n} ,\]
  for all $n \geq 0$. To obtain $\pi^s (n)$, the stationary distribution for the original Markov chain,
  we need to disentangle the composite state $\bar 0$. We have that $\pi^s(0) + \pi^s(1) = \bar \pi^s (\bar 0)$
  and, by stationarity,
  \[ \pi^s (0) = (1-s)^2 \pi^s (0) +  (1-s)^2 \pi^s (1) .\]
  Solving these equations we obtain (\ref{pi}).
   Some algebra then yields the mean of the distribution $\pi^s$ (when $s <  1/2$), giving
  \begin{equation}
  \label{pimean1}
   \sum_{n \in \Z^+} n \pi^s (n) = 2s + \frac{s^2}{1-2s} < \infty,\end{equation}
  since $s < 1/2$.
  Hence  (\ref{mulim0}) follows from Lemma  \ref{lem:limits}(c).
  \end{proof}

 \subsection{Proofs of Theorems \ref{thm0b}, \ref{thm1}, and \ref{thm2}}
\label{sec:remains}

Now we can complete the proofs of our remaining theorems.

\begin{proof}[Proof of Theorem \ref{thm0b}.]
The $s < 1/2$ statement follows from (\ref{pimean}) and   (\ref{pimean1}).
  On the other hand, for any $s \geq 1/2$ and any $r <1/2$,
we have $C_t^N (s) \geq C_t^N (r)$, so 
\[ \lim_{N \to \infty} \lim_{t \to \infty} \Exp_N [ C_t^N (s) ] \geq \lim_{r \uparrow 1/2}
\left( 2r + \frac{r^2}{1-2r} \right) = \infty ,\]
as required.
\end{proof}

\begin{proof}[Proof of Theorem \ref{thm1}.] The $s < 1/2$ part of the theorem follows from Lemma \ref{limitlem}
and Lemma \ref{lem:limits}(b).
On the other hand, for any $s \geq  1/2$ and any $r<1/2$, 
$\Pr_N [ C^N_t (s) \leq n ] \leq \Pr_N [ C^N_t (r) \leq n]$
so that, again by Lemmas \ref{limitlem} and \ref{lem:limits}(b),
\[ \lim_{N \to \infty} \lim_{t \to \infty} \Pr_N [ C^N_t (s) \leq n ] \leq
\lim_{r \uparrow 1/2} \sum_{m=0}^n \pi^r (m) = 0 ,\]
 by (\ref{pi}).
\end{proof}

\begin{proof}[Proof of Theorem \ref{thm2}.]
Since $\Pr_N [ X_t^{(n)} \leq s ] =\Pr_N [ C^N_t(s) \geq n]$, we have
\begin{align*}
\lim_{N \to \infty} \lim_{t \to \infty} \Pr_N [ X_t^{(n)} \leq s]
= 1 - \lim_{N \to \infty} \lim_{t \to \infty} \sum_{m=0}^{n-1} \Pr_N [ C_t^N (s) = m]
 = 1 - \sum_{m=0}^{n-1} \pi^s (m) ,\end{align*}
by Theorem \ref{thm1}.
The result (\ref{theta}) now follows from (\ref{pi}) and some algebra.
\end{proof}

\section{Appendix: Markov chain limits}
\label{sec:limits}

  In relating the asymptotics of the Markov chains $C_t^N(s)$ to the `$N=\infty$' Markov chain $C_t(s)$,
  we need the following general result (Theorem \ref{mclim})
   on a form of analyticity for families of Markov chains
  that are uniformly ergodic in a certain sense
  (but not the sense used in Chapter 16 of \cite{mt}, where the
  uniformity is over all possible starting states of a single Markov chain;
  our `uniformity' is over a family of Markov chains all starting at the same point).
Theorem \ref{mclim} is  related to material in Chapters 6 and 7 of \cite{fmm},
 although
our setting is somewhat different and the proof we give uses different ideas.
Our context differs from the set-up in \cite{fmm}, most notably in that
 our state-space changes with $N$, unlike
 in \cite{fmm}. It is likely that the methods of \cite{fmm}
 could be adapted to our setting. However, it is simpler to proceed directly; we use, in part, a coupling approach.
 Recall that a subset $S$ of $\R$ is locally finite if $S \cap R$ is finite for any bounded set $R$.

 \begin{theo}
 \label{mclim}
 Fix $N_0 \in \N$.
 For each integer $N \geq N_0$ let $Y^N_t$ be an irreducible, aperiodic Markov chain under $P_N$ on $S_N$ a countable subset of $[0,\infty)$,
 where $0 \in S_N$, $S_{N} \subseteq S_{N+1}$, and $\limsup_{N \to \infty} S_N = \infty$.
 Also suppose that $Y_t$ is an irreducible, aperiodic Markov chain under $P$ on $S:= \cup_N S_N$. Suppose that $S$
 is locally finite. Write $E_N$ and $E$ for expectation under $P_N$ and $P$ respectively.
 Suppose   that there exists
$B < \infty$ such that for all  $N \geq N_0$,
\begin{equation}
\label{jumps}
 P_N [ | Y^N_{t+1} - Y^N_t | > B  ] = 0 , ~\textrm{and} ~  P  [ | Y_{t+1} - Y_t | > B   ] = 0
 .\end{equation}
  Suppose also that there exist $A_0 \in (0,\infty)$ and $\eps_0>0$ for which
\begin{align}
\label{foster}
 \sup_{N \geq N_0} \sup_{x \in S_N \cap [A_0,\infty)} E_N [ Y^N_{t+1} - Y^N_t \mid Y^N_t = x ]&\leq - \eps_0; \nonumber \\
 \sup_{x \in S \cap [ A_0, \infty ) } E [ Y_{t+1} - Y_t \mid Y_t = x ]&\leq - \eps_0 .\end{align}
 Let $q_N (x,y) := P_N [ Y^N_{t+1} = y \mid Y^N_t = x]$
and $q(x,y) := P [ Y_{t+1} = y \mid Y_t = x]$.
Suppose that
\begin{equation}
\label{problim}
 \lim_{N \to \infty} [ q_N (x,y) \1 \{ x \in S_N \} ] = q  (x,y) ,\end{equation}
for all $x, y \in S$.
Then the following hold.
 \begin{itemize}
 \item[(a)]
 The Markov chain $Y_t$ is ergodic on $S$ and, for any $N \geq N_0$, $Y^N_t$ is ergodic on $S_N$.
 Let $\tau_N := \min \{ t \in \N : Y^N_t = 0 \}$ denote
 the time of the first return to $0$ by the process $Y^N_t$;
 similarly let $\tau := \min \{ t \in \N : Y_t = 0\}$.
 There exists $\delta >0$ such that
 \begin{equation}
 \label{expmom}
 E [ \re^{\delta \tau} ] < \infty , ~\textrm{and} ~ \sup_{N \geq N_0} E_N [ \re^{\delta \tau_N} ] < \infty .\end{equation}
 \item[(b)]
 There exist stationary distributions $\nu_N$ on $S_N$ and $\nu$ on $S$ such that
  \[ \lim_{t \to \infty}
P_N [ Y^N_t = x] = \nu_N (x) , ~\textrm{and}~
\lim_{t \to \infty}
P  [ Y_t = x] = \nu (x)
  .\]
Moreover,  there exist $c >0$ and $C< \infty$ such that for all $N \geq N_0$ and all $x \in S_N$,
$\nu_N (x) \leq C \re^{-cx}$ and, for all $x \in S$,
$\nu(x) \leq C \re^{-cx}$.
\item[(c)]
For any $x \in S$, $\lim_{N \to \infty} \nu_N(x)
= \nu(x)$.
\item[(d)] Finally,
\[ \lim_{t \to \infty} E_N [ Y^N_t ] = \sum_{x \in S_N} x \nu_N (x) < \infty,~\textrm{and}~
\lim_{t \to \infty} E  [ Y _t ] = \sum_{x \in S} x \nu  (x) < \infty .\]
\end{itemize}
 \end{theo}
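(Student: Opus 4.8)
The plan is to base everything on a single exponential Lyapunov function together with the uniform geometric drift it produces. Put $V(x) := \re^{\theta x}$ for a small $\theta > 0$ to be fixed at the end. Combining the bounded jumps~(\ref{jumps}) with the uniform negative drift~(\ref{foster}) and a second-order Taylor estimate for $\re^{\theta u} - 1$ on $|u| \leq B$, one obtains, for all sufficiently small $\theta$ (the threshold not depending on $N$), a geometric drift inequality: there are $\rho = \rho(\theta) \in (0,1)$ and $D = D(\theta) < \infty$, both independent of $N$, with $E_N[ V(Y^N_{t+1}) \mid Y^N_t = x ] \leq \rho V(x)$ for every $x \in S_N$ with $x \geq A_0$, and $E_N[ V(Y^N_{t+1}) \mid Y^N_t = x ] \leq D$ for $x$ in the finite set $F := S \cap [0,A_0)$ (finite by local finiteness of $S$); the same holds verbatim for $Y_t$. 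Standard Foster--Lyapunov theory then yields positive recurrence, hence (using the assumed aperiodicity) ergodicity of each $Y^N_t$ and of $Y_t$, and with it the stationary laws $\nu_N$ and $\nu$ of part (b). Iterating the drift inequality from the state $0$ gives $E_N[ V(Y^N_t) \mid Y^N_0 = 0 ] \leq \rho^t + D/(1-\rho) =: D'$ for all $N$ and $t$, and Fatou's lemma turns this into $\sum_{x} V(x) \nu_N(x) \leq D'$ uniformly in $N$, which is exactly the claimed tail bound $\nu_N(x) \leq D' \re^{-\theta x}$ (and likewise $\nu(x) \leq D' \re^{-\theta x}$).

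For the exponential moments~(\ref{expmom}) in part (a) I would first run, for $\delta_1 > 0$ chosen with $\re^{\delta_1} \rho < 1$, the supermartingale $\re^{\delta_1 (t \wedge \sigma_N)} V(Y^N_{t \wedge \sigma_N})$, where $\sigma_N$ is the first hitting time of $F$; optional stopping and monotone convergence give $E_N[ \re^{\delta_1 \sigma_N} \mid Y^N_0 = x ] \leq V(x)$, uniformly in $N$. To promote returns to $F$ into returns to the single state $0$, I would use the irreducibility of the limit chain $Y_t$ on $S$: for each $x \in F$ fix a finite $q$-path from $x$ down to $0$; these paths visit only finitely many states, so for all $N$ beyond some $N_1$ those states lie in $S_N$ and, by~(\ref{problim}), the $P_N$-probability of reaching $0$ from any point of $F$ within $r$ steps (with $r$ the largest such path length) is at least some $p_0 > 0$, uniformly in $N \geq N_1$. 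A renewal-type bound then patches together the uniformly exponentially integrable excursions back to $F$ with the uniform chance $p_0$ of absorption at $0$ on each visit to $F$: decomposing $\tau_N$ into successive ``attempts'' (a window of $r$ steps from a visit to $F$, followed on failure by an excursion back to $F$ from a state of $V$-value at most $v^* := \re^{\theta(A_0 + Br)}$), the number of failed attempts is stochastically dominated by a geometric variable with parameter $p_0$, and the resulting geometric compounding of the per-attempt exponential moments converges provided $(1-p_0) v^* < 1$. Since $v^*$ depends on $\theta$ and on the fixed quantities $A_0, B, r$ only, this last condition holds once $\theta$ is chosen small enough (compatibly with the geometric drift threshold); with $\theta$ so fixed and $\delta$ small, $\sup_{N \geq N_1} E_N[ \re^{\delta \tau_N} \mid Y^N_0 = 0 ] < \infty$, and since each of the finitely many chains with $N_0 \leq N < N_1$ is ergodic with geometric drift and hence individually has a finite such moment, a common $\delta$ gives~(\ref{expmom}); the bound for $Y_t$ itself is the $N = \infty$ instance of the same argument (needing only irreducibility and finiteness of $F$, with no uniformity).

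With part (a) available, part (c) follows from the occupation-time representation of the stationary distribution, $\nu_N(x) = E_N[ \sum_{t=0}^{\tau_N - 1} \1\{ Y^N_t = x \} \mid Y^N_0 = 0 ] \big/ E_N[ \tau_N \mid Y^N_0 = 0 ]$ (see e.g.\ \cite[Corollary I.3.6]{asmussen}), and the identical representation for $\nu$. Numerator and denominator are each $\sum_{t \geq 0}$ of probabilities of events ($\{ Y^N_t = x,\ \tau_N > t \}$ and $\{ \tau_N > t \}$) depending only on the first $t$ transitions; by the bounded-jumps property the chain stays, on such an event, within the finite set $S \cap [0, Bt]$, so summing over the finitely many admissible paths and invoking~(\ref{problim}) gives term-by-term convergence to the matching $Y_t$-quantities, while the uniform bound $P_N[ \tau_N > t \mid Y^N_0 = 0 ] \leq \re^{-\delta t} \sup_N E_N[ \re^{\delta \tau_N} \mid Y^N_0 = 0 ]$ dominates the sums; dominated convergence then yields $\nu_N(x) \to \nu(x)$. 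Finally, for part (d): as $t \to \infty$, $Y^N_t$ converges in distribution to $\nu_N$, and $\sup_t E_N[ V(Y^N_t) \mid Y^N_0 = 0 ] \leq D'$ makes the family $\{ Y^N_t \}_t$ uniformly integrable, so $E_N[ Y^N_t ] \to \sum_x x \nu_N(x)$, which is finite by the exponential tail bound of part (b); the same reasoning gives the statement for $Y_t$.

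The step I expect to be the main obstacle is the $N$-uniform exponential moment bound on the return time to $0$ in part (a). The geometric drift by itself controls only returns to the finite set $F$; upgrading this to the single state $0$ requires a reachability estimate that is simultaneously uniform in $N$, even though the state space $S_N$ grows with $N$, and this is precisely where the convergence hypothesis~(\ref{problim}), the local finiteness of $S$, and the freedom to shrink $\theta$ after the geometric data has been fixed must be used in concert. Once that uniformity is in hand, parts (b)--(d) are comparatively routine applications of Fatou's lemma, dominated convergence, and uniform integrability.
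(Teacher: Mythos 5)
Your proof is correct, and it takes a genuinely different route from the paper's in two respects, most notably in part (c). For parts (a)--(b), both you and the paper rely on an exponential Lyapunov function $\re^{\theta x}$ and on a uniform-in-$N$ version of irreducibility near $0$ (your paths-from-$F$-to-$0$ construction and the paper's Lemma \ref{lem:irred} are the same device). The difference is that you develop the exponential return-time moments~(\ref{expmom}) from scratch via a geometric-drift inequality plus a renewal/geometric-compounding argument --- carefully noting that the compounding condition $(1-p_0)\re^{\delta r}v^*<1$ is achievable because $p_0$ and $r$ are fixed before $\theta$ is shrunk --- whereas the paper invokes Foster's criterion and then delegates the exponential-moment step to existing results (Hajek; Aspandiiarov--Iasnogorodski), verifying only that the hypotheses hold uniformly in $N$. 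For part (c) the divergence is larger: the paper constructs an explicit coupling of $Y^N_t$ and $Y_t$ started at $0$, shows they stay coupled with high probability through an entire excursion, and then passes this to the cycle representation~(\ref{ratio}); you instead expand $E_N[L_N(x)]=\sum_{t\ge 0}P_N[Y^N_t=x,\,\tau_N>t]$ and $E_N[\tau_N]=\sum_{t\ge 0}P_N[\tau_N>t]$, note that each summand is a finite sum of products of $q_N$'s over paths confined to $S\cap[0,Bt]$ (by the bounded-jump property and local finiteness), use~(\ref{problim}) for term-by-term convergence, and close with dominated convergence via the uniform tail $P_N[\tau_N>t]\le C\re^{-\delta t}$ from~(\ref{expmom}). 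Your route to (c) is more elementary and avoids the coupling construction entirely; the paper's coupling makes the "excursions nearly coincide" picture explicit but at the cost of more bookkeeping. Both then handle (d) the same way, via uniform exponential control of the stationary tails. One minor point to tighten: in (d) you should either fix $Y^N_0=0$ or assume $E_N[V(Y^N_0)]<\infty$, since the uniform-in-$t$ bound on $E_N[V(Y^N_t)]$ that drives the uniform integrability depends on the initial law --- the paper is equally terse here, so this is not a gap in comparison.
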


  Before getting into the details, we sketch the outline of the proof. The Foster-type
 condition (\ref{foster}) will enable us to conclude that the Markov chains
 have a uniform (in $N$) ergodicity property implying parts (a) and (b). We then couple $Y_t$ and $Y_t^N$ on an
 interval $[0,A]$ where $A$ is chosen large enough so that the processes
 reach $0$ before leaving $[0,A]$ with high probability. Given such an $A$, we choose $N$
 large enough so that on this finite interval (\ref{problim}) ensures that the two
 Markov chains can,  with high probability, be coupled until the time that they reach $0$.
 This strategy, which succeeds with high probability, ensures that the two processes
follow identical paths over an entire excursion; using the excursion-representation
 of the stationary distributions will yield   part (c).

 An elementary
  but important consequence of the conditions of Theorem \ref{mclim}
 is a `uniform irreducibility' property that we will use repeatedly in the
 proof;   we state this property in the following result.
 Note that  the condition (\ref{problim}) is stronger than is necessary for parts (a) and (b)
 of Theorem \ref{mclim}: in the proof of Theorem \ref{mclim} (a) and (b) below,
 we use only the uniform irreducibility property given in Lemma \ref{lem:irred}.

 \begin{lm}
 \label{lem:irred}
Under the conditions of Theorem \ref{mclim}, for any $A \in (0,\infty)$ there
 exist  $\eps_1 := \eps_1 (A)>0$, $N_1(A) \in \N$, and $n_0(A) \in \N$ such that, for all $N \geq N_1(A)$ and all
  $x,y \in S_N \cap [0,A]$ there
 exists $n := n(x,y) \leq n_0 (A)$ for which
  \begin{equation}
 \label{irred}
 P_N [ Y_{n}^N = y \mid Y_0^N = x ] \geq \eps_1 .
 \end{equation}
 \end{lm}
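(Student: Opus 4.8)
The plan is to deduce the uniform irreducibility for the chains $Y^N_t$ from the irreducibility of the limit chain $Y_t$ together with the local finiteness of $S$ and the bounded-jumps hypothesis. First I would note that, since $S$ is locally finite, $S \cap [0,A]$ is a finite set. Because $Y_t$ is irreducible on $S$, for every ordered pair $(x,y) \in (S \cap [0,A])^2$ there is some $n(x,y) \in \N$ with $q^{(n(x,y))}(x,y) > 0$, where $q^{(n)}$ denotes the $n$-step transition function of $Y_t$. Taking maxima and minima over the finitely many such pairs, I set $n_0(A) := \max_{x,y} n(x,y) < \infty$ and $2\eps_1 := \min_{x,y} q^{(n(x,y))}(x,y) > 0$. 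Note that neither aperiodicity nor the Foster condition (\ref{foster}) is needed for this step: irreducibility alone supplies the positive $n(x,y)$-step probabilities, and allowing $n$ to depend on $(x,y)$ removes any need for a common period.

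The key step is to transfer this to $Y^N_t$ by showing that $n$-step transition functions converge: for each fixed $n \in \N$ and each $x, y \in S$,
\[ \lim_{N \to \infty} q^{(n)}_N(x,y) = q^{(n)}(x,y) , \]
where $q^{(n)}_N$ is the $n$-step transition function of $Y^N_t$, interpreted as $0$ when $x \notin S_N$. I would prove this by induction on $n$. The base case $n=1$ is exactly (\ref{problim}), recalling that $x \in S_N$ for all $N$ large since $S_N \uparrow S$. For the inductive step, write $q^{(n+1)}_N(x,y) = \sum_z q_N(x,z) q^{(n)}_N(z,y)$; by the bounded-jumps assumption (\ref{jumps}) the only nonzero terms have $|z - x| \le B$, so the sum effectively runs over the finite set $S \cap [x-B,x+B]$, which does not depend on $N$ (local finiteness). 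One can then pass to the limit term by term using (\ref{problim}) and the inductive hypothesis, obtaining $\sum_z q(x,z) q^{(n)}(z,y) = q^{(n+1)}(x,y)$, again because $q(x,z) = 0$ for $|z-x| > B$ by the bounded jumps of $Y_t$. This interchange of limit and a finite sum is the only real subtlety in the argument, and the point that makes it go through is precisely that bounded jumps together with local finiteness confine the relevant intermediate states to an $N$-independent finite set.

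Finally, given $A$, for each pair $(x,y) \in (S \cap [0,A])^2$ the convergence above applied with $n = n(x,y)$ gives $q^{(n(x,y))}_N(x,y) \to q^{(n(x,y))}(x,y) \ge 2\eps_1$, so there is $N_1(x,y) \in \N$ with $q^{(n(x,y))}_N(x,y) \ge \eps_1$ for all $N \ge N_1(x,y)$. Since there are only finitely many such pairs, I set $N_1(A) := \max\{ N_0, \max_{x,y} N_1(x,y) \}$. Then for any $N \ge N_1(A)$ and any $x, y \in S_N \cap [0,A] \subseteq S \cap [0,A]$, taking $n := n(x,y) \le n_0(A)$ yields (\ref{irred}), completing the proof.
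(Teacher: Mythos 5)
Your proof is correct and follows essentially the same route as the paper's: use local finiteness to reduce $S \cap [0,A]$ to a finite set, invoke irreducibility of the limit chain to obtain uniform lower bounds $n(x,y)$ and $\eps_1$, and transfer these to the finite-$N$ chains via (\ref{problim}). The only cosmetic difference is that you establish convergence of the full $n$-step transition kernel $q_N^{(n)}(x,y) \to q^{(n)}(x,y)$ by induction, whereas the paper fixes a single positive-probability path of length $n(x,y)$ and observes that the (finite) product of one-step probabilities converges; both versions rest on the same combination of bounded jumps and local finiteness to keep the relevant intermediate states in an $N$-independent finite set.
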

 \begin{proof}
 Fix $A \in (0,\infty)$. Local finiteness  implies that
 $S \cap [0,A]$ is finite. 
 Take $N$ large enough so that $S_N \cap [0,A] = S \cap [0,A]$.
 Irreducibility of $Y_t$ implies that for any $x, y \in S$, there exists
 $n(x,y) < \infty$ such that $P [ Y_{n(x,y)} = y \mid Y_0 = x] >0$.
 There are only finitely many pairs $x, y \in S \cap [0,A]$, so
for such $x,y$, in fact $P [ Y_{n(x,y)} = y \mid Y_0 = x ] \geq \eps_2(A) >0$
where $n (x,y) \leq n_0 (A)$ and $\eps_2$ and $n_0$ depend only on $A$, not on $x, y$.
Moreover, for any $x, y \in S \cap [0,A]$, there are only finitely many paths
of length at most $n_0(A)$ from $x$ to $y$. It follows that for any $x, y \in [0,A]$
we can find a sequence of states of $S$, 
$x_0 =x, x_1, \ldots, x_{n-1}, x_n = y$ with $n = n(x,y) \leq n_0 (A)$ for which
\[ P [ Y_1 = x_1, \ldots, Y_n = x_n \mid Y_0 = x_0 ] \geq \eps_3 (A) > 0. \]
 However, by (\ref{problim}) we have that, as $N \to \infty$,
 \begin{align*} P_N [ Y^N_1 = x_1, \ldots, Y^N_n = x_n \mid Y^N_0 = x_0 ]
 = q_N (x_0, x_1 ) q_N (x_1, x_2 ) \cdots q_N(x_{n-1}, x_n ) \\
 \to q  (x_0, x_1 ) q  (x_1, x_2 ) \cdots q (x_{n-1}, x_n )
 = P [ Y_1 = x_1, \ldots, Y_n = x_n \mid Y_0 = x_0 ].\end{align*}
 So for all $N$ large enough, $P_N [ Y^N_n = y \mid Y^N_0 = x] \geq \eps_3 (A) /2$, say.
 \end{proof}

 Now we move on to the proof of Theorem \ref{mclim}.
 The theorem and its proof are in parts closely related to existing results in the literature, in particular
 certain results from \cite{ai2,ai3,aim,tweedie,hajek,mp,lamp3,fmm,mt} amongst others.
  However, none of the existing results that we have seen
  fits exactly into the present context, and rather than try to adapt various
 parts of these existing results we give a largely self-contained proof. In the course of the proof we give some
 more details of how the arguments relate to existing results.

 \begin{proof}[Proof of Theorem \ref{mclim} (a) and (b).]
   First we prove part (a).
  Since $Y^N_t$ is an irreducible, aperiodic Markov chain on the finite or countably infinite state-space $S_N$,
 the drift condition (\ref{foster}) enables us to apply Foster's criterion (see e.g.\ \cite{fmm}) to conclude
 that $Y^N_t$ is positive-recurrent (ergodic) and in particular, since (\ref{foster}) is uniform in $N$,
 $E_N [\tau_N]$ is uniformly bounded (independently of $N$).

 In fact, we have the much stronger result (\ref{expmom}).
 The exponential moments result (\ref{expmom}) for a specific $N$ is essentially a classical result,
 closely related to results in \cite{lamp3,fmm,mt}, for instance, and
 follows from the drift condition (\ref{foster})
 together with the bounded jumps condition (\ref{jumps}) and irreducibility: concretely, one may use, for example, Theorem 2.3 of
 \cite{hajek} or the $a =0$ case of Corollary 2 of \cite{ai2}. The uniformity in (\ref{expmom}) follows from the fact
 that (\ref{foster})
and (\ref{jumps}) hold uniformly in $N$, and that the irreducibility is also uniform in the sense of Lemma \ref{lem:irred}. Indeed, the
results of \cite{hajek,ai2} apply not to $\tau_N$ itself but to $\sigma_N := \min \{ t \in \Z^+ : Y^N_t \leq A_0 \}$
where $A_0$ is the constant in (\ref{foster}): standard arguments using the uniform irreducibility condition extend the uniform bound
on $E_N [ \re^{\delta \sigma_N} ]$ to the desired uniform bound on $E_N [ \re^{\delta \tau_N} ]$.
 In particular, (\ref{expmom}) implies that
 for any $k \in \N$ there exists $C_k < \infty$ such that
 \begin{equation}
 \label{times}
 E [ \tau^k ] \leq C_k, ~\textrm{and} ~ \sup_{N \geq N_0 } E_N [ \tau^k_N ] \leq C_k ,\end{equation}
 a fact that we will need later.  This completes the proof of part (a).

 By positive-recurrence,
 there exist    (unique) stationary distributions $\nu_N$ on $S_N$ and $\nu$ on $S$
  such that $\lim_{t \to \infty} P_N [ Y^N_t =x ] = \nu_N (x)$
  and $\lim_{t \to \infty} P [ Y_t = x] =\nu (x)$.
  Next we prove the uniform exponential decay of $\nu_N$ and $\nu$.
 Again these results are closely related to existing results in the literature,
 such as those in \cite{hajek,mp,tweedie}, Chapters 6 and 7 of \cite{fmm}, Section 2.2 of \cite{ai3}, or Section 16.3 of \cite{mt}.

  For $\delta \in (0,1)$,
 let $W_t := \re^{\delta Y_t}$. We show
 that $W_t$ has negative drift
 outside  a finite interval, provided $\delta >0$ is small enough.
  We have that
 \[ E [ W_{t+1} - W_t \mid Y_t = x] = \re^{\delta x} E [ \re^{\delta (Y_{t+1} - Y_t ) } - 1 \mid Y_t = x] .\]
 Taylor's theorem with Lagrange remainder implies that for all $y \in [-B,B]$ and all $\delta \in (0,1)$,
 $\re^{\delta y} - 1 \leq \delta y + K \delta^2$, where $K := K(B) < \infty$. Using
 this inequality and the bounded jumps assumption (\ref{jumps}), we obtain
 \begin{align*} E [ W_{t+1} - W_t \mid Y_t = x] & \leq \delta \re^{\delta x} \left( E [ Y_{t+1} - Y_t \mid Y_t =x ]
 + K \delta \right) \\
 & \leq \delta \re^{\delta x} \left( - \eps_0 + K \delta \right) ,\end{align*}
 when $x > A_0$, by (\ref{foster}). Hence, for $\delta := \delta (B, \eps_0) \in (0,1)$
 sufficiently small, we have that
 \begin{equation}
 \label{superm}
   E [ W_{t+1} - W_t \mid Y_t = x] < 0 ,\end{equation}
 for $x > A_0$.

 Let $\sigma := \min \{ t \in \Z^+ : Y_t \leq A_0 \}$ and
 $\nu_x := \min \{ t \in \Z^+ : Y_t \geq x \}$.
 By irreducibility, $\sigma \wedge \nu_x < \infty$ a.s..
 Moreover, by (\ref{superm}), $W_{t \wedge \sigma \wedge \nu_x}$ is a nonnegative supermartingale.
 Hence $W_{t \wedge \sigma \wedge \nu_x } \to W_{\sigma \wedge \nu_x}$ a.s.\ as $t\to \infty$, and
 \[ \re^{\delta Y_0} = 
 W_0 \geq E [ W_{\sigma \wedge \nu_x } ] \geq
 E [ W_{\nu_x} \1 \{ \sigma > \nu_x \} ]
 \geq \re^{\delta x} P [ \sigma > \nu_x ] .\]
 The same argument holds for $W^N_t:= \re^{\delta Y_t^N}$, uniformly in $N \geq N_0$.
 Thus we have
 \begin{equation}
 \label{hitting}
 P [ \nu_x < \sigma \mid Y_0 =y ] \leq \re^{-\delta(x-y) } , ~\textrm{and}~
 P_N [ \nu_{N,x} < \sigma_N \mid Y_0^N = y] \leq \re^{-\delta(x-y) } ,\end{equation}
where $\nu_{N,x} := \min \{ t \in \Z^+ : Y_t^N \geq x\}$ and $\sigma_N := \min \{ t \in \Z^+ : Y_t^N \leq A_0 \}$.

We deduce from (\ref{hitting}), with the uniform irreducibility property described in Lemma \ref{lem:irred},
 that the probability of reaching $[x,\infty)$ before returning to $0$ decays
 exponentially in $x$, uniformly in $N$.  
 We will show that
 \begin{equation}
 \label{return}
 P [ \nu_x < \tau ] \leq C \re^{-\delta x}, \textrm{ and } P_N [ \nu_{N,x} < \tau_N ] \leq C \re^{-\delta x} .\end{equation}

 By uniform irreducibility (Lemma \ref{lem:irred})
 and the bounded jumps assumption (\ref{jumps}), we have that there exist $A_1 \in (A_0, \infty)$
 and $\theta > 0$
 for which
 \[ \inf_{y \in S \cap [0,A_0]} P [ \tau < \nu_{A_1}  \mid Y_0 = y ] > \theta, ~\textrm{and}~
 \min_{N \geq N_0} \inf_{y \in S_N \cap [0,A_0] } P_N [ \tau_N < \nu_{N,A_1} \mid Y^N_0 = y ] > \theta .\]
 Together with (\ref{hitting}), this will yield the result (\ref{return}):
 the idea is that each time the process enters $[0,A_0]$, it has uniformly positive
 probability of reaching $0$ before it exits $[0,A_1]$, otherwise, by (\ref{hitting}),
 starting from $[A_1, A_1+B]$ the process reaches $[x,\infty)$ before its next return to $[0,A]$ with an exponentially
 small probability, and (\ref{return}) follows. We write out a more formal version of this
 idea for $Y_t$ only; a similar argument holds for $Y_t^N$.

 Let $\kappa_0 := 0$ and for $n \in \Z^+$
 define iteratively the stopping times $\eta_n := \min \{ t \geq \kappa_n : Y_t > A_1 \}$
 and $\kappa_{n+1} := \min \{ t \geq \eta_n : Y_t \leq A_0 \}$.
 By successively conditioning at these times (all of which are a.s.\ finite), we have
 \begin{align*}
 P [ \nu_x < \tau   ] & \leq P [ \nu_x < \kappa_1 \mid Y_{\eta_0} ]
 + E \left[ P [ \nu_x < \tau \mid Y_{\kappa_1} ] \mid Y_{\eta_0} \right] \\
 & \leq P [ \nu_x < \kappa_{n+1} \mid Y_{\eta_0} ]
 + E \left[ P [ \eta_1 < \tau \mid Y_{\kappa_1} ] E [ P [ \nu_x < \tau \mid Y_{\eta_1} ]
 \mid Y_{\kappa_1} ] \mid Y_{\eta_0} \right] \\
& \leq C \re^{-\delta x} \left( 1 + (1-\theta) + (1-\theta)^2 + \cdots \right) ,
 \end{align*}
 since $P[ \eta_{n} < \tau \mid Y_{\kappa_n} ] \leq 1-\theta$ a.s.,
 and $P [ \nu_x < \kappa_{n+1} \mid Y_{\eta_n} ] \leq C \re^{-\delta x}$
 by (\ref{hitting}) and the fact that $Y_{\eta_n} \leq A_1 +B$ a.s., by (\ref{jumps}). Thus we
 verify (\ref{return}).
 
 Let $L_N(x)$ denote the total occupation time of
 state $x \in S_N$ by $Y^N_t$ before time $\tau_N$, i.e., during the first excursion of $Y^N_t$;
 similarly for $L(x)$ with respect to $Y_t$. That is,
 \[ L_N (x) := \sum_{t = 0}^{\tau_N-1} \1 \{ Y^N_t = x \}, ~\textrm{and}~
 L  (x) := \sum_{t = 0}^{\tau -1} \1 \{ Y _t = x \}  .\]
 Standard theory for irreducible, positive-recurrent Markov chains (see e.g.\ \cite[Corollary I.3.6, p.\ 14]{asmussen})
 gives
 \begin{equation}
 \label{ratio} \nu_N (x) = \frac{E_N [ L_N (x)]}{E_N [ \tau_N] }, ~\textrm{and}~
 \nu  (x) = \frac{E [ L  (x)]}{E  [ \tau] } .\end{equation}
 By (\ref{return}), the probability of visiting $x$ during a single excursion
decays exponentially. In order to bound the expected occupation time, we need an estimate
for the probability of returning to $x$ starting from $x$. We claim that there exists $\eps_2 >0$ for which
\begin{equation}
\label{visitx}
\max_{N \geq N_0}
\max_{x \in S_N \cap [A_0,\infty)} P_N [ \textrm{return to } x \textrm{ before hitting } 0 \mid Y^N_0 = x ] \leq 1 -\eps_2 ,\end{equation}
and also $\max_{x \in S \cap [A_0, \infty)} P  [ \textrm{return to } x \textrm{ before hitting } 0 \mid Y_0 = x ] \leq 1 -\eps_2$.
To verify (\ref{visitx}), note that from (\ref{foster}) and (\ref{jumps})
there exists $\eps' >0$ such that $P [ Y_{t+1} - Y_t \leq -\eps' \mid Y_t = x] \geq \eps'$
for all $x \geq A_0$, and the same for $Y_t^N$ (uniformly in $N$). Then (\ref{hitting}) yields
(\ref{visitx}). It follows from (\ref{visitx}) that, starting from $x$, the number of returns (before hitting $0$)
of $Y_t$ or $Y_t^N$ to $x$ is stochastically dominated (uniformly in $N$ and $x$) by an exponential random variable.
In particular, 
\[ E_N [ L_N (x) ] \leq C P_N [ \nu_{N,x} < \tau_N ], ~\textrm{and} ~
E[ L (x) ] \leq C P [ \nu_x < \tau ] ,\]
which, with (\ref{ratio}), yields the claimed tail bounds on $\nu_N$ and $\nu$.
 \end{proof}

 \begin{proof}[Proof of Theorem \ref{mclim} (c) and (d).]
 First we prove part (c). 
 We will again use the representation (\ref{ratio}).
 We use a coupling argument to show that, as $N \to \infty$, for any $x \in S$,
 \begin{equation}
 \label{coup}
 E_N [ L_N (x)] \to E [ L (x)], \textrm{ and }
E_N [ \tau_N] \to E [ \tau ] .\end{equation}
 Let $\eps>0$. Take $A \in (0,\infty)$ large enough so that $B C_1/A < \eps$, where $C_1$ is the constant
 in the $k=1$ version of (\ref{times}) and  $B$ is the bound in (\ref{jumps}). Also, for convenience, choose $A$ so that $A/B$ is an integer.
 We claim that
 \begin{equation}
 \label{eq4}
 \lim_{N \to \infty} \sup_{x \in S_N \cap [0,2A]} \sum_{y \in S} | q_N(x,y) \1 \{ y \in S_N \} - q (x,y) | = 0 .\end{equation}
 To see this, note that since
 $S$ (and hence also $S_N \subseteq S$)
 is locally finite, in the supremum in (\ref{eq4}), $x$ takes only finitely many values (uniformly in $N$), and, by (\ref{jumps}),
 only finitely many terms in the sum are non-zero (again, uniformly in $N$). Hence by
 condition (\ref{problim}) we verify the claim (\ref{eq4}).
 By (\ref{eq4}), we can choose $N$ large enough such that
  \begin{equation}
 \label{eq3} \sup_{x \in S_N \cap[0,2A]} \sum_{y \in S} | q_N (x,y) \1 \{ y \in S_N\} - q_\infty (x, y) |  \leq \eps B /A .\end{equation}

 We couple $Y^N_t$ and $Y_t$.
 We take $N$ large enough so that $S_N \cap [0,2A+B] = S \cap [0,2A+B]$.
 We use the notation $P_N^*$ for the
 probability measure on the space on which we are going to construct coupled instances
 of $Y^N_t$ and $Y_t$, and write $E_N^*$ for the corresponding expectation.
 We start at $Y^N_0 = Y_0 \leq A$.
We claim that one can construct $(Y_t^N, Y_t)$ as a Markov chain under $P_N^*$
so that
 $P^*_N [ Y_t^N = y \mid Y_t^N =x ] = q_N (x,y)$,
 $P^*_N [ Y_t = y \mid Y_t =x ] = q (x,y)$,
and
 \[ P^*_N [ (Y_t^N, Y_t ) = (y,y) \mid ( Y_t^N, Y_t  ) = (x,x) ] \geq \min \{ q_N (x,y), q (x,y ) \} .\]
 To see this, observe that when $Y_t^N = Y_t = x$ we may
 choose transition
 probabilities $r_x (y,z) = P_N^* [ (Y_t^N,Y_t) = (y,z) \mid (Y_t^N,Y_t) = (x,x) ]$
 satisfying $r_x (y,y) = \min \{ q_N (x,y), q (x,y) \}$, $\sum_{z \neq y} r_x (y, z) = q_N (x,y)$,
 and $\sum_{y \neq z} r_x (y, z) = q  (x,z)$: these constraints can always be satisfied
 by some choice of $r_x$. If $Y_t^N \neq Y_t$, we define $P_N^*$ by allowing the two
 processes to evolve independently.

 Let $\sigma := \min \{ t \in \N : Y_t^N \neq Y_t^\infty \}$ denote the time at which the processes
 first separate.
 For any $t \leq A/B$, we have from (\ref{jumps}) that
 $\max \{ Y_t^N, Y_t \} \leq 2A$ a.s., and
 together with the fact that the state-spaces of the two processes
 coincide on $[0,2A+B]$,
  (\ref{eq3})  implies that, for $t \leq A/B$,
$P^*_N [ \sigma > t+1 \mid \sigma > t ] \geq 1 - (B \eps / A)$.
Hence, for any $t \leq A/B$,
\begin{equation}
\label{fail}
P^*_N [ \sigma > t ] \geq 1 - \left( \frac{t B \eps}{A} \right) .\end{equation}

 Let $E_t$ denote the event $E_t := \{ \sigma > t \} \cap \{ \tau_N \leq t\}$,
 i.e., that the paths of $Y_t$ and $Y_t^N$
 coincide up until time $t$ and
 visit zero by time $t$.
  Then, by (\ref{fail}),
 \begin{equation}
 \label{fail2} P^*_N [ E_{A/B}^{\rm c} ] \leq P^*_N [ \sigma \leq A/B ] + P^*_N [ \tau_N > A/B ]
 \leq \eps  + ( B C_1/A) \leq 2 \eps ,\end{equation}
 using Markov's inequality and (\ref{times}) to bound $P^*_N [ \tau_N > A/B ]$,
 and the choice of $A$ to obtain the final inequality.
  On $E_t$, $\{ \tau_N = \tau\}$, so that
 \begin{align*} E^*_N [ | \tau_N - \tau | ] & \leq E^*_N [ | \tau_N - \tau| \1 ( E_{A/B}^{\rm c} ) ] \\
 & \leq ( E^*_N [ \tau_N ^2] + E^*_N [ \tau^2 ] )^{1/2} ( P^*_N [ E_{A/B}^{\rm c} ] )^{1/2} ,\end{align*}
 by the Cauchy--Schwarz inequality.
 By (\ref{fail2}) and the $k=2$ case of (\ref{times}), this last expression is bounded above by $\eps^{1/2}$ times
 a constant not depending on $N$. Since $\eps>0$ was arbitrary, the second statement in (\ref{coup}) follows.

  Similarly, on $E_t$, $\{ L_N (x) = L (x) \}$ for any $x \in S$, so that
  \begin{align*} E^*_N [ | L_N (x) - L (x) | ] & \leq E^*_N [ | L_N (x) - L (x) | \1 ( E_{A/B}^{\rm c} ) ] \\
 & \leq ( E^*_N [ \tau_N ^2] + E^*_N [ \tau^2 ] )^{1/2} ( P^*_N [ E_{A/B}^{\rm c} ] )^{1/2} ,\end{align*}
 since $L_N (x) \leq \tau_N$ and $L(x) \leq \tau$ a.s.. Thus we obtain the first statement in (\ref{coup}).
 Combining the two statements in (\ref{coup}) with the representation in (\ref{ratio}) we obtain
 $\nu_N(x) \to \nu (x)$ for any $x \in S$, completing the proof of part (c).

 Finally we prove part (d). The convergence results follow from, for example, Theorem 2 of \cite{tweedie}
 once the integrability of the stationary distributions is established. But the fact that
 $\sum x \nu_N (x)$ and $\sum x \nu (x)$ are finite follows from the bounds in part (b).
 \end{proof}

\section*{Acknowledgements}

 AW is grateful to Edward Crane for suggesting
 the partial-order-driven model described in Section \ref{open}.

\end{document}